\documentclass[a4paper,11pt,leqno]{amsart}
\usepackage[lmargin=3cm,tmargin=3.5cm,bmargin=3.5cm,rmargin=3cm]{geometry} 
\setlength\parindent{10pt}
\usepackage{amsaddr}
\usepackage{graphicx}
\usepackage{subfig}
\usepackage{amssymb}
\usepackage{amsfonts}
\usepackage{mathrsfs}  
\usepackage{amsmath}
\usepackage{epsfig}
\usepackage{float}
\usepackage{setspace}
\usepackage{tikz}
\usepackage[colorlinks=true, urlcolor=green, citecolor=red, linkcolor=blue, linktocpage,pdfpagelabels, bookmarksnumbered,bookmarksopen]{hyperref}
\newtheorem{theorem}{Theorem}[section]
\newtheorem{lemma}{Lemma}[section]

\newtheorem{remark}{Remark}[section]
\newtheorem{proposition}{Proposition}[section]

\numberwithin{equation}{section}
\newcommand{\R}{\mathbb R}

\begin{document}

\title[Schr\"odinger-Korteweg-de Vries system ]{Well-posedness and long time behavior for the Schr\"odinger-Korteweg-de Vries interactions on the half-Line}

\author[M. Cavalcante]{M\'arcio Cavalcante}
\address{\emph{Instituto de  Matem\'atica\\Universidade Federal de Alagoas - UFAL\\  Macei\'o, Brazil}}
\email{marcio.melo@im.ufal.br}

\author[A. J. Corcho]{Ad\'an J. Corcho}
\address{\emph{Instituto de Matem\'atica\\Universidade Federal do Rio de Janeiro - UFRJ\\ Ilha do Fund\~ao 21941-909.  Rio de Janeiro RJ-Brazil}}
\email{adan@im.ufrj.br}

\thanks{A. J. Corcho was supported by CAPES and CNPq-307761/2016-9, Brazil}

\thanks{\textit{Mathematics Subject Classification}. 35Q53, 35Q55, 35B65.}

\thanks{\textit{Keywords.} Schr\"odinger-KdV equations on the half-line, Local well-posednesss, Long time behavior}

\maketitle

\begin{center}
\begin{minipage}{13cm}
{\small \textbf{Abstract.} The initial-boundary value problem for the Schr\"odinger-Korteweg-de Vries system is considered on the left and right half-line for a wide class of initial-boundary data, including the energy regularity $H^1(\R^{\pm})\times H^1(\R^{\pm})$ for initial data. Assuming homogeneous boundary conditions it is shown for positive coupling interactions that local solutions can be extended globally in time for initial data in the energy space; furthermore, for negative coupling interactions it was proved, 
for a certain class of regular initial data, the following result: if the respective solution does not exhibits finite time blow-up in $H^1(\R^-)\times H^1(\R^-)$, then the norm of the weighted space $L^2\big(\R^-,\, |x|dx\big)\times L^2\big(\R^-,\, |x|dx\big)$ blows-up at infinity time with \textit{super-linear rate}, this is obtained by using a satisfactory algebraic manipulation of a new global virial type identity associated to the system .}
\end{minipage}
\end{center}

\section{\textbf{Introduction}}
The purpose of this paper is to present new results concerning the dynamics of the initial-boundary value problem (IBVP) on the left and right half-line associated 
to the nonlinear interactions modeled by the Schr\"odinger-Korteweg-de Vries (NLS-KdV) system. More precisely, for the right half-line $\mathbb{R}^+=(0, +\infty)$ 
the mathematical model is given by
\begin{equation}\label{SK}
	\begin{cases}
		iu_t+ u_{xx}=\alpha uv+\beta u|u|^2,& (x,t)\in\R^+\times(0,T),\\
		v_t + v_{xxx}+vv_x=\gamma (|u|^2)_x,& (x,t)\in\R^+\times(0,T),\\
		u(x,0)=u_0(x),\ v(x,0)=v_0(x),& x\in \R^+,\\
		u(0,t)=f(t),\ v(0,t)=g(t),& t\in(0,T),
	\end{cases}
\end{equation}
and  for the left half-line $\mathbb{R}^-=(-\infty,0)$ the model is given by
\begin{equation}\label{SKe}
\begin{cases}
iu_t+u_{xx}=\alpha uv+\beta u|u|^2,& (x,t)\in \R^-\times(0,T),\\
v_t+v_{xxx} + vv_x=\gamma(|u|^2)_x,& (x,t)\in \R^-\times(0,T),\\
u(x,0)=u_0(x),\ v(x,0)=v_0(x),& x\in  \R^-,\\
u(0,t)=f(t),\ v(0,t)=g(t),\ v_x(0,t)=h(t),& t\in(0,T), 
\end{cases}
\end{equation}
where in both cases $u=u(x,t)$ is a complex-valued function, $v=v(x,t)$ is a real-valued function and $\alpha,\, \beta, \, \gamma$ are real constants. Also, we  say in both models that the coupling interactions are \textit{positive} or \textit{negative} if $\alpha \gamma >0$ or $\alpha \gamma <0$, respectively. 
\subsection{The evolution of the mass, moment and energy functionals for systems \eqref{SK} and \eqref{SKe}}
The boundary conditions in \eqref{SK} and \eqref{SKe} modify the apparency of the well-known conservation laws for  Schr\"odinger-Korteweg-de Vries interactions posed on the line $\R$. In fact,  for \eqref{SK} and \eqref{SKe} the evolutions of the classical functionals (mass, moment and energy) change substantially as we will see bellow. In order to simplify the notation we denote with super-index $+$ and $-$ the cases right and half-line, respectively. The evolution of the mass is now governed by the following laws:
\begin{equation}\label{i1}
	\mathcal{M}^{\pm}(t):=\int_{\R^{\pm}}|u(x,t)|^2dx=\mathcal{M}^{\pm}(0) \pm 2\,\text{Im}\int_0^tu_x(0,s)\bar{u}(0,s)ds;
\end{equation}
the evolution of the moment obeys the laws:
\begin{equation}\label{i2}
\mathcal{Q}^{\pm}(t):=\int_{\R^{\pm}}\Big(\frac{\alpha}{\gamma}v^2+2\,\text{Im}\big(u\bar{u}_x)\Big)dx
=\mathcal{Q}^{\pm}(0) \mp\int_0^t\big(\mathcal{Q}^u(s) + \mathcal{Q}^v(s)\big)ds,
\end{equation}
where
\begin{align}
&\mathcal{Q}^u(s)=2|u_x(0,s)|^2 + 2\text{Im}\big(u(0,s)\bar{u}_s(0,s)\big)-\beta |u(0,s)|^4, \label{i2-a}\\
&\mathcal{Q}^v(s)=\frac{\alpha}{\gamma}v_x^2(0,s)-\frac{2\alpha}{\gamma} v_{xx}(0,s)v(0,s)-\frac{2\alpha}{3\gamma}v^3(0,s).\label{i2-b}
\end{align}
Finally, the  energy adopts the following expressions:
\begin{equation}\label{i3}
\begin{split}
\mathcal{E}^{\pm}(t)&:=\int_{\R^{\pm}} \Big(\alpha v|u|^2-\frac{\alpha}{6\gamma}v^3+\frac{\beta}{2}|u|^4+\frac{\alpha}{2\gamma}v_x^2+|u_x|^2\Big)dx\\
&=\mathcal{E}^{\pm}(0) \mp \int_0^t\big(\mathcal{E}_1(s) + \mathcal{E}_2(s)\big)ds,
\end{split}
\end{equation}
where
\begin{align}
&\mathcal{E}_1(s)=2\,\text{Re}(u_x(0,s)\bar{u}_s(0,s))+\tfrac{\alpha}{\gamma}v_x(0,s)v_s(0,s),\label{i3-a}\\
&\mathcal{E}_2(s)=\tfrac{\alpha}{2\gamma}\big[\,v_{xx}(0,s)+\tfrac12 v^2(0,s) -\gamma|u(0,s)|^2\,\big]^2.\label{i3-b}
\end{align}

A formal deduction of the laws  \eqref{i1}, \eqref{i2} and \eqref{i3} is exhibited in Section \ref{laws-deduction}.

\begin{remark}
	It is worth pointing out that in the energy identity \eqref{i3} the sign of $\alpha\gamma$ is very important to study the dynamic of models  \eqref{SK} and \eqref{SKe}, since in the work of Corcho and Linares \cite{CL}, in context of all line $\R$,  was obtained global well-posedness results in energy space on the case $\alpha\gamma>0$, while in the case  $\alpha \gamma<0$ the problem of global well-posedness remains open. This motivate us to study the dynamic of solutions of \eqref{SK} and \eqref{SKe} in both cases.
\end{remark}

\subsection{Previous results}
The Cauchy problem for this coupled nonlinear interactions, posed on the line $\R$,  has been studied extensively. For instance, from the physical point of view we refer the works \cite{fi1}, \cite{fi2}, \cite{fi3},  \cite{fi4} and more recently \cite{D-Nuyen-S,Liu-Nguyen}. Concerning well-posedness for the initial value problem (IVP)  with data considered in Sobolev spaces defined on $\R$ and $\mathbb{T}$ we refer the works \cite{Arbieto}, \cite{CL}, \cite{GW}, \cite{YW} and the references contained therein. 

\medskip 
More recently,  in \cite{CM}, the  IBVP \eqref{SK} and \eqref{SKe} was studied for initial-boundary data with the following configuration:
\begin{equation*}\label{hipinicial1}
(u_0,v_0,f,g)\in
\begin{cases}
	\mathscr{H}^{s,\kappa}_+                              & \text{if}\; s, \kappa <1/2,\medskip\\ 
    \mathscr{H}^{s,\kappa}_+\;\;\text{with}\; u_0(0)=f(0) & \text{if}\; s>1/2\; \text{and}\; \kappa <1/2\medskip,\\ 
	\mathscr{H}^{s,\kappa}_+\;\;\text{with}\; v_0(0)=g(0) & \text{if}\; s<1/2\; \text{and}\; \kappa >1/2\medskip,\\ 
	\mathscr{H}^{s,\kappa}_+\;\;\text{with}\; u_0(0)=f(0)\;\text{and}\; v_0(0)=g(0) & \text{if}\; s, \kappa >1/2,
\end{cases}
\end{equation*}
where 
\begin{equation}\label{Hsk-positive}
\mathscr{H}^{s,\kappa}_+:=H^s(\mathbb{R}^+)\times H^{\kappa}(\mathbb{R}^+)\times H^{(2s+1)/4}(\mathbb{R}^+)\times  H^{(\kappa+1)/3}(\mathbb{R}^+).
\end{equation}
This setting was motivated by the following smoothing effects given in \cite{KPV}, namely
\begin{equation}\label{se-1}
	\big\|\psi(t)e^{it\partial_x^2}\phi\big\|_{L^{\infty}_xH^{(2s+1)/4}_t}\leq c_{\psi,s}\|\phi\|_{H^{s}(\mathbb{R})}
\end{equation}
and
\begin{equation}\label{se-2}
	\big\|\psi(t) e^{-t\partial_x^3}\phi\big\|_{L^{\infty}_xH^{(\kappa+1)/3}_t}\leq c_{\psi,\kappa} \|\phi\|_{H^{\kappa}(\mathbb{R})},
\end{equation}
where $\psi(t)$ is a smooth cutoff function and the operators $e^{it\partial_x^2}$ and $e^{-t\partial_x^3}$ denote the free propagator on $\mathbb{R}$ associated to the linear Schr\"odinger and KdV equations, respectively.

\medskip 
Analogously, for the left half-line the configuration considered for initial and boundary data was 
\begin{equation*}\label{hipinicial2}
(u_0,v_0,f,g,h)\in
\begin{cases}
\mathscr{H}^{s,\kappa}_-                              & \text{if}\; s, \kappa <1/2,\medskip\\ 
\mathscr{H}^{s,\kappa}_-\;\;\text{with}\; u_0(0)=f(0) & \text{if}\; s>1/2\; \text{and}\; \kappa <1/2\medskip,\\ 
\mathscr{H}^{s,\kappa}_-\;\;\text{with}\; v_0(0)=g(0) & \text{if}\; s<1/2\; \text{and}\; \kappa >1/2\medskip,\\ 
\mathscr{H}^{s,\kappa}_-\;\;\text{with}\; u_0(0)=f(0)\;\text{and}\; v_0(0)=g(0) & \text{if}\; s, \kappa >1/2,
\end{cases}
\end{equation*}
where
\begin{equation}\label{Hsk-negative}
\mathscr{H}^{s,\kappa}_-:= H^s(\mathbb{R}^-)\times H^{\kappa}(\mathbb{R}^-)\times H^{(2s+1)/4}(\mathbb{R}^+)\times  H^{(\kappa+1)/3}(\mathbb{R}^+)
\times H^{\kappa/3}(\mathbb{R}^+).
\end{equation}

\medskip 
In \cite{CM} the authors obtained a local theory for models \eqref{SK} and \eqref{SKe}, based on the inversion of a Riemann-Liouville fractional integral (see \cite{C}, \cite{CK} and \cite{Holmerkdv}). We summarize these results as follows. 
Local theory in $H^s(\R^+)\times H^{\kappa}(\R^+)$ for the IBVP \eqref{SK} was established in the following situations:

\medskip 
\begin{itemize}
	\item for any data  with   $0\leq s<\frac12$ and  $\max\big\{-\frac34,\ s-1\big\}< \kappa <\min\big\{4s-\frac12,\frac12\big\} $ for all $\beta\in \R$;
	
	\medskip 
	\item for any data  with $\frac12<s<1$ and  $s-1< \kappa <\frac12$ when $\beta=0$; 
	
	\medskip 
	\item  for small KdV-data  with   $\frac14<s<\frac12$ and  $\frac12<\kappa<\min\big\{4s-\frac12,s+\frac12\big\}$  for all $\beta \in \R$;
	
	\medskip 
	\item for small KdV-data with $\frac12<s<1$ and $\frac12 <\kappa <s+\frac12$ when $\beta=0$. 
\end{itemize}
\begin{figure}[htp]\label{figure}
	\centering 
	\begin{tikzpicture}[scale=3]
	\draw[very thin](-0.5,0)--(1/8,0);
	\draw[very thin, ->] (1,0)--(1.5,0) node[below] {$\boldsymbol{s}$ \scriptsize{(NLS)}};
	\draw[->] (0,-1)--(0,1.7) node[right] {$\boldsymbol{\kappa}$ \scriptsize{(KdV)}};
	\filldraw[color=gray!30](1,0.5)--(1,1.5)--(0.5,1)--(0.5,0.5)--(1,0.5);
	\filldraw[color=gray!50](0.5,0.5)--(0.5,1)--(1/3,5/6)--(0.25,0.5)--(0.5,0.5);
	\draw[very thick, dashed](0,-0.75)--(0.25,-0.75)--(1,0)--(1,1.5)--(0.5,1)--(0.5,0.5);
	\draw[very thick, dashed](0.5,1)--(1/3,5/6)--(0,-0.5);
	\draw[very thick, dashed](0.5,-0.5)--(0.5,0.5);
	\draw[very thick, dashed](0.5,0.5)--(1,0.5);
	\draw[very thick, dashed](0.5,0.5)--(0.25,1/2);
	\draw[thick](0,-0.5)--(0,-0.75);
	\node at (0.75,0.05){$\boldsymbol{\scriptstyle \beta=0}$};
	\node at (0.75,0.7){$\boldsymbol{\scriptstyle \beta=0}$};
	\node at (1.05,-0.08){$1$};
	\node at (1.1,0.5){$\frac{1}{2}$};
	\node at (-0.1,-3/4){$-\frac{3}{4}$};
	\node at (0.67,1.32)[rotate=45]{\small{$\boldsymbol{\kappa=s+1/2}$}};
	\node at (0.15,0.6)[rotate=75.96375653]{\small{$\boldsymbol{\kappa=4s-1/2}$}};
	\node at (0.7,-0.43)[rotate=45]{\small{$\boldsymbol{\kappa=s-1}$}};
	\end{tikzpicture}
	\caption{{\small Regions of local well-posedness achieved in \cite{CM} for the right half-line ($\R^+$). For regions painted in gray the respectively local theory was developed under smallness assumption on the data for the KdV-component of the system.}}
	\label{Figura-I}
\end{figure}

Analogous results were obtained in $H^s(\R^-)\times H^{\kappa}(\R^-)$ for the IBVP \eqref{SKe}, which are described in Figure \ref{Figura-II}. 
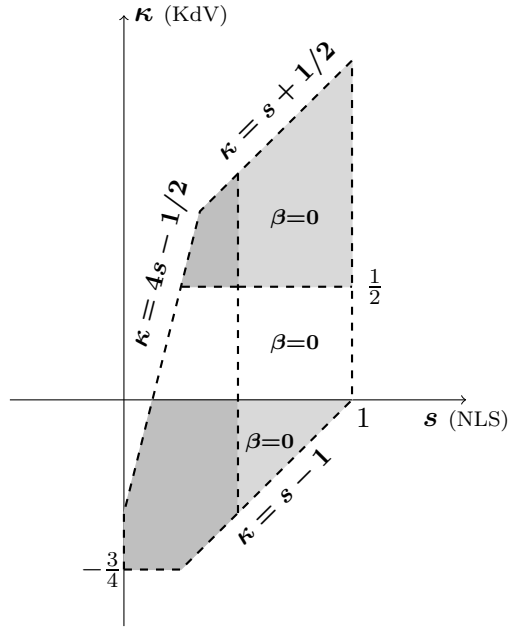
\begin{figure}[h]
	\begin{tikzpicture}[scale=3]
	\draw[->] (-0.5,0)--(1.5,0) node[below] {$\boldsymbol{s}$ \scriptsize{(NLS)}};
	\draw[->] (0,-1)--(0,1.7) node[right] {$\boldsymbol{\kappa}$ \scriptsize{(KdV)}};
	\filldraw[color=gray!30](1,0.5)--(1,1.5)--(0.5,1)--(0.5,0.5)--(1,0.5);
	\filldraw[color=gray!50](0.5,0.5)--(0.5,1)--(1/3,5/6)--(0.25,0.5)--(0.5,0.5);
	\filldraw[color=gray!50](1/8,0)--(0.5,0)--(0.5,-0.5)--(0.25,-3/4)--(0,-3/4)--(0,-0.5)--(1/8,0);
	\filldraw[color=gray!30](0.5,0)--(1,0)--(0.5,-0.5)--(0.5,0);
	\draw[thick, dashed](0,-0.75)--(0.25,-0.75)--(1,0)--(1,1.5)--(0.5,1)--(0.5,0.5);
	\draw[thick, dashed](0.5,1)--(1/3,5/6)--(0,-0.5);
	\draw[thick, dashed](0.5,-0.5)--(0.5,0.5);
	\draw[thick, dashed](0.5,0.5)--(1,0.5);
	\draw[thick, dashed](0,-3/4)--(0,-0.5);
	\draw[thick, dashed](0.5,0.5)--(0.25,1/2);
	\draw[very thin] (1/8,0)--(1,0);
	\node at (0.75,0.25){$\boldsymbol{\scriptstyle\beta=0}$};
	\node at (0.75,0.8){$\boldsymbol{\scriptstyle\beta=0}$};
	\node at (1.05,-0.08){$1$};
	\node at (1.1,0.5){$\frac{1}{2}$};
	\node at (-0.1,-3/4){$-\frac{3}{4}$};
	\node at (0.64,-0.19){$\boldsymbol{\scriptstyle\beta=0}$};
	\node at (0.67,1.32)[rotate=45]{\small{$\boldsymbol{\kappa=s+1/2}$}};
	\node at (0.16,0.6)[rotate=75.96375653]{\small{$\boldsymbol{\kappa=4s-1/2}$}};
	\node at (0.7,-0.43)[rotate=45]{\small{$\boldsymbol{\kappa=s-1}$}};
	\end{tikzpicture}
	\caption{{\small  Regions of local well-posedness achieved in \cite{CM} for the left half-line ($\R^-$). For regions painted in gray the respectively local theory was developed under smallness assumption on the data for the KdV-component of the system.}}
	\label{Figura-II}
\end{figure}

\subsection{Comments about the techniques to solve IBVPS on the half-line} Different techniques have been developed in the last years in order to solve IBVPs associated to some dispersive models on the half line. In \cite{Fokas1} Fokas introduced an approach to solves IBVPs associated to integrable nonlinear evolution equations,  which is known as the unified transform method (UTM) or as Fokas transform method. The UTM method provides a generalization of the Inverse Scattering Transform (IST) method from initial value problems to IBVPs. The  classical method based on the Laplace transform was used successfully in the works \cite{Bona,Bona1,tzirakis,tzirakis3} developed by Bona, Sun, Zhang, Ergodan,  Compaan and Tzirakis.  Recently, a new approach was introduced by Colliander and Kenig \cite{CK} by recasting the IBVP on the half-line by a forced IVP defined in the line $\R$. To see other applications of this technique we refer the results established by Holmer, Cavalcante and Corcho in the works \cite{C,CM,Holmer,Holmerkdv}. On the other hand, in  \cite{Faminskii},  Faminskii used an approach based on the investigation of special solutions of a ``boundary potential'' type for solution of linearized KdV equation  in order to obtain global results for the IBVP associated to the KdV equation on the half-line with more general boundary conditions. More recently, \cite{Fokas2}, Alexandrou, Athanassios, Fokas, Himonas and Mantzavinos introduced a method which combines the UTM method  with a contraction mapping principle.

\subsection{Main results} The results obtained in \cite{CM} do not cover the  energy space regularity  
$H^1(\R^{\pm})\times H^1(\R^{\pm})$, which is the natural environment to study the existence of global solutions as well as other interesting properties concerning the dynamics of the solutions. So, in  this paper we are interested in to establish a local theory for high regularity initial data, including the energy space, and consequently the possibility to show the existence of global solutions in this space under suitable assumptions for the boundary conditions. In this address the same approach used in \cite{CM} can not be applied due the difficulty to estimate the boundary forcing operators in high regularity in the context of Bourgain spaces. So, to establish well-posedness we will follow closely the ideas developed in the works \cite{Bona} and \cite{tzirakis}. In this work is also considered the long time behavior of the solutions. In particular, for the problem posed on the  left half-line, we will prove that for a wide class of data in $H^1(\R^-)\times H^1(\R^-)$ such that the corresponding solutions either blow-up in finite time or are global satisfying 
$$\lim\limits_{t\to +\infty}\frac{1}{t^{1-}}\sup\limits_{t\in [0,\,t]}\left(\big\||x|^{1/2}u(\cdot, t)\big\|_{L^2(\mathbb{R}^-)}+ \big\||x|^{1/2}v(\cdot, t)\big\|_{L^2(\mathbb{R}^-)}\right)=+\infty.$$

\medskip 
Now we are in position to state the main results. We begin by establishing the new local theory for the IBVP \eqref{SK} and \eqref{SKe} including the energy regularity, described in the following theorems:

\begin{theorem}[\textbf{Local theory in  $\R^+$ including energy regularity}]\label{teorema1}
	Consider the IBVP \eqref{SK} with  the compatibility conditions $u_0(0)=f(0)$ and 
	$v_0(0)=g(0)$ and let $s, \kappa$ verifying 
	$$\frac12<s<\frac32\quad \text{and}\quad \frac12< \kappa < \min\Big\{\frac32,\,s+\frac12\Big\}.$$
	Then, for all $(u_0, v_0, f, g)\in \mathscr{H}^{s,\kappa}_+$ there exist a positive time
	$$
	T=T\big(\|u_0\|_{H^s(\R^+)},\|v_0\|_{H^{\kappa}(\R^+)},\|f\|_{H^{(2s+1)/4}(\R^+)},\|g\|_{H^{(\kappa+1)/3}(\R^+)}\big)
	$$ 
	and a solution $(u(\cdot, t),v(\cdot, t)) \in \mathcal{C}\big([0,T];\;H^s(\R^+)\times H^{\kappa}(\R^+)\big)$ of the IBVP \eqref{SK} in the distributional sense 
	satisfying
	\begin{align}
	&\partial_x^ju\in \mathcal{C}(\R_x^+;H^{(2s+1-2j)/4}(0,T))\quad \text{for}\quad j=0,1,\\
	&\partial_x^jv\in \mathcal{C}(\R_x^+;H^{(\kappa+1-j)/4}(0,T))\quad  \text{for}\quad  j=0,1,2.
	\end{align} 
 Moreover, the map $$(u_0,v_0,f,g)\longmapsto (u(\cdot, t),v(\cdot, t))$$
  is locally Lipschitz-continuous from the space  $\mathscr{H}^{s,\kappa}_+$ into the class of the continuous functions  
  $\mathcal{C}\big([0,T];\; H^s(\R^+)\times H^{\kappa}(\R^+)\big)$. 
\end{theorem}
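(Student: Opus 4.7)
The plan is to follow the Bona-Sun-Zhang / Compaan-Tzirakis methodology: extend the initial data to the full line, represent the solution through a Duhamel-type integral equation supplemented by an explicit boundary corrector, and close the argument via a Banach fixed point in a space whose norm encodes both spatial Sobolev regularity and time traces of boundary derivatives in the sense of Kato smoothing.

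First I would extend $u_0\in H^s(\R^+)$ and $v_0\in H^\kappa(\R^+)$ to $\tilde u_0\in H^s(\R)$ and $\tilde v_0\in H^\kappa(\R)$; the assumed compatibility $u_0(0)=f(0)$ and $v_0(0)=g(0)$ ensures that the traces of these extensions match the boundary data at $t=0$. Following \cite{Bona,tzirakis}, I would write the putative solution as
\begin{equation*}
u(x,t) = e^{it\partial_x^2}\tilde u_0(x) - i\int_0^t e^{i(t-t')\partial_x^2}(\alpha uv+\beta u|u|^2)(\cdot,t')\,dt' + L^{S} p(x,t),
\end{equation*}
where $L^{S}$ is the explicit boundary integral operator producing a function whose trace at $x=0$ matches the residual datum $p(t)=f(t)-[e^{it\partial_x^2}\tilde u_0](0,t)-(\text{Duhamel trace at }x=0)$. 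An analogous representation holds for $v$ with the Airy semigroup $e^{-t\partial_x^3}$, the forcing $-\gamma(|u|^2)_x$, and a KdV boundary integral operator $L^{K}$ absorbing the residual from $g$.

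Next I would work in a Banach space $X_T\times Y_T$ whose norms encode exactly the regularity classes appearing in the statement, namely $\mathcal{C}([0,T];H^s(\R^+)\times H^\kappa(\R^+))$ together with the trace norms $\partial_x^j u\in \mathcal{C}(\R_x^+;H^{(2s+1-2j)/4}(0,T))$ for $j=0,1$ and the corresponding KdV traces for $j=0,1,2$. The required linear bounds are the Kato-type smoothing estimates \eqref{se-1} and \eqref{se-2} for the free evolutions, energy and trace estimates for the Duhamel terms, and continuity of $L^{S}$ and $L^{K}$ from the trace data spaces into $X_T$ and $Y_T$, respectively. The hypothesis $s,\kappa<3/2$ keeps the boundary data just below the first threshold where an additional $\partial_x$-compatibility condition at $t=0$ would be needed.

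The decisive step and main obstacle is to close the nonlinear estimates. Because $s,\kappa>1/2$, the spaces $H^s(\R^+)$ and $H^\kappa(\R^+)$ embed in $L^\infty$ and are algebras, so the coupling $uv$ and the cubic $u|u|^2$ are controlled by standard fractional Leibniz inequalities after extension to the whole line. The serious issue is the derivative nonlinearity $(|u|^2)_x=u\bar u_x+\bar u u_x$ driving the KdV equation, which costs a full derivative measured at regularity $\kappa$. Balancing the regularity of $u$ against this derivative loss, and exploiting the $1/3$-gain supplied by the KdV part of the Kato smoothing, is precisely what forces the upper bound $\kappa<s+\tfrac12$ in the statement. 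With this estimate in hand, the $T$-dependent prefactor in the nonlinear bounds is made small by shrinking $T$ as a function of the data norm, yielding a contraction on a closed ball of $X_T\times Y_T$. Uniqueness, the trace-regularity assertions, and local Lipschitz dependence on the data then follow from standard differencing; the fact that the fixed point provides a distributional solution of \eqref{SK} with the prescribed boundary values is built into the integral formulation.
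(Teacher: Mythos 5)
Your overall architecture is the same as the paper's: extend the data, write $u$ and $v$ via Duhamel formulas plus explicit boundary correctors absorbing the residual traces, and contract. Where you diverge is in the functional setting, and that is where the real content of the proof lives. The paper does \emph{not} close the estimates in a classical space built from $\mathcal{C}_tH^s_x$ plus Kato-smoothing trace norms with fractional Leibniz; it works in Bourgain spaces $X^{s,b}$ with $b<\tfrac12$ for the Schr\"odinger component and in Faminskii's modified Bourgain spaces $Y^{\kappa,b,\alpha}$, $W^{\kappa,b,\alpha}$ for the KdV component, and the contraction is closed by quoting a specific package of multilinear estimates: the Erd\"ogan--Tzirakis trilinear estimate for $u_1u_2\bar u_3$, Faminskii's bilinear estimate for $\partial_x(v_1v_2)$, and the mixed coupling estimates for $uv$ and $\partial_x(u_1u_2)$ imported from \cite{CM}. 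The choice $b<\tfrac12$ is forced by the boundary operators (the correctors are only well behaved in $X^{s,b}$ and $Y^{\kappa,b,\alpha}$ for $b<\tfrac12$), and the constraint $\kappa<s+\tfrac12$ emerges from the numerology of those coupling estimates (e.g.\ $\kappa\le s+3b-1$ with $b$ near $\tfrac12$), which is consistent with, but more precise than, your heuristic.

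The substantive gap in your sketch is the KdV self-interaction $vv_x$: it is absent from your integral representation for $v$ and from your discussion of the nonlinear estimates, yet it is a derivative nonlinearity of exactly the same severity as $(|u|^2)_x$ and is the classical obstruction for KdV at low regularity. For $\kappa$ barely above $\tfrac12$ the algebra property of $H^\kappa(\R^+)$ does not absorb the derivative in $\partial_x(v^2)$, so ``standard fractional Leibniz'' does not close this term in a $\mathcal{C}_tH^\kappa_x$-based space; one needs either the Bourgain-space bilinear smoothing (as the paper does via Faminskii's Lemma \ref{bilinear1}) or the full Bona--Sun--Zhang quarter-plane machinery, neither of which is ``standard.'' Until you specify how $vv_x$ is estimated, together with the gain in $T$ needed for the contraction (which in the paper comes from Lemmas \ref{Xsb-estimates-GTV} and \ref{faminski1} and the $T^\epsilon$ factors in the bilinear bounds), the fixed-point argument does not close.
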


\begin{theorem}[\textbf{Local theory in  $\R^-$ including energy regularity}]\label{teorema1left}  
	Consider the IBVP \eqref{SKe} with  the compatibility conditions 
	$u_0(0)=f(0)$ and $v_0(0)=g(0)$ and let $s, \kappa$ verifying  
	$$\frac12<s<\frac32\quad \text{and}\quad \frac12< \kappa < \min\Big\{\frac32,\,s+\frac12\Big\}.$$
	Then, for all $(u_0, v_0, f, g, h)\in \mathscr{H}^{s,\kappa}_-$ there exist a positive time
	$$
	T=T\big(\|u_0\|_{H^s(\R^-)},\|v_0\|_{H^{\kappa}(\R^-)},\|f\|_{H^{(2s+1)/4}(\R^+)},\|g\|_{H^{(\kappa+1)/3}(\R^+)}, \|h\|_{H^{\kappa/3}(\R^+)} \big)
	$$ 
	and a solution $(u(\cdot, t),v(\cdot, t)) \in \mathcal{C}\big([0,T];\;H^s(\R^-)\times H^{\kappa}(\R^-)\big)$ of the IBVP \eqref{SK} in the distributional sense 
	satisfying
		\begin{align}
		&\partial_x^ju\in \mathcal{C}(\R_x^+;H^{(2s+1-2j)/4}(0,T))\quad  \text{for}\quad  j=0,1,\\
		&\partial_x^jv\in \mathcal{C}(\R_x^+;H^{(\kappa+1-j)/4}(0,T))\quad  \text{for}\quad  j=0,1,2.
		\end{align}
  Moreover, the map 
  $$(u_0,v_0,f,g,h)\longmapsto (u(\cdot, t),v(\cdot, t))$$
  is locally Lipschitz-continuous from the space  $\mathscr{H}^{s,\kappa}_-$ into the class of the continuous functions  
  $\mathcal{C}\big([0,T];\; H^s(\R^-)\times H^{\kappa}(\R^-)\big)$. 
\end{theorem}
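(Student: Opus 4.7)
The plan is to reduce Theorem \ref{teorema1left} to a contraction argument in a product of resolution spaces tailored to the left half-line, following the strategy developed in \cite{Bona,tzirakis} rather than the Bourgain-space approach of \cite{CM}, which breaks down at the higher regularity level considered here. First I would extend $(u_0,v_0)$ from $\R^-$ to all of $\R$ preserving Sobolev regularity and rewrite \eqref{SKe} as an integral equation on $\R^-$,
\begin{equation*}
(u,v)=\bigl(\Phi_1(u,v),\;\Phi_2(u,v)\bigr),
\end{equation*}
where each $\Phi_i$ is the sum of three pieces: the free group evolution of the extended data restricted to $\R^-$, a linear boundary potential absorbing the trace $f$ (for the NLS component) and the pair $(g,h)$ (for the KdV component, which requires two such potentials because two boundary conditions are prescribed), and a Duhamel term carrying the nonlinearities.

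The resolution space would be built to mirror the conclusions of the theorem: elements $(u,v)$ satisfy simultaneously $u\in \mathcal{C}([0,T];H^s(\R^-))$, $v\in \mathcal{C}([0,T];H^{\kappa}(\R^-))$ together with the Kato-type temporal traces $\partial_x^j u\in \mathcal{C}(\R^-_x;H^{(2s+1-2j)/4}(0,T))$ for $j=0,1$ and $\partial_x^j v\in \mathcal{C}(\R^-_x;H^{(\kappa+1-j)/3}(0,T))$ for $j=0,1,2$. The linear estimates I would need in these spaces are: (i) the Kato smoothing bounds for $e^{it\partial_x^2}$ and $e^{-t\partial_x^3}$ in the form \eqref{se-1}--\eqref{se-2}, which control the free propagation of the extended data; (ii) spatial continuity of the three linear boundary potentials in the appropriate Sobolev norms, available through the Riemann--Liouville fractional integration machinery used in \cite{C,CK,Holmerkdv}; and (iii) analogous Kato-type trace bounds for the Duhamel inhomogeneous terms.

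For the nonlinear step I would estimate $\alpha uv+\beta u|u|^2$ in $L^1_T H^s(\R^-)$ and $vv_x-\gamma(|u|^2)_x$ in $L^1_T H^{\kappa-1}(\R^-)$ and then invoke the Duhamel linear estimates. The hypotheses $\tfrac12<s<\tfrac32$ and $\tfrac12<\kappa<\min\{\tfrac32,s+\tfrac12\}$ place both components in an algebra-type regime: $H^s(\R^-)$ and $H^{\kappa}(\R^-)$ are multiplicative algebras when $s,\kappa>\tfrac12$, so the products $uv$, $u|u|^2$ and $v^2$ inherit the expected Sobolev regularity, and the constraint $\kappa\leq s+\tfrac12$ is exactly what is needed to allocate one derivative to $(|u|^2)_x$ while still landing in $H^{\kappa-1}(\R^-)$. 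Short-time smallness obtained from a positive power of $T$ gained in the Duhamel integral produces the contraction and, by the standard differences argument, the locally Lipschitz dependence on the data.

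The main obstacle will be the sharp handling of the second boundary potential for the KdV equation associated with the trace $v_x(0,t)=h(t)$: because $h$ only lives in $H^{\kappa/3}(\R^+)$, one full time-derivative lower than $g\in H^{(\kappa+1)/3}(\R^+)$, the task of extending $h$ to $\R$ and constructing a boundary potential $\mathcal{V}_2[h]$ whose spatial $H^{\kappa}(\R^-)$ norm together with the derivative traces $\partial_x^j\mathcal{V}_2[h]\in \mathcal{C}(\R^-_x;H^{(\kappa+1-j)/3}(0,T))$ are simultaneously controlled is the most delicate analytic ingredient, and it is this step that forces $\kappa<\tfrac32$ (the bound $\kappa<s+\tfrac12$ coming instead from the nonlinear bookkeeping sketched above). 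Once these linear and bilinear bounds are secured, continuity in time into $H^s(\R^-)\times H^{\kappa}(\R^-)$ follows from the continuity of each linear building block in its own resolution space, and the verification that the fixed point is a distributional solution of \eqref{SKe} with the prescribed traces at $x=0$ is routine.
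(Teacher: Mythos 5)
Your overall architecture (extend the data, write the solution as free evolution plus boundary potentials plus Duhamel terms, run a contraction in a space recording both the spatial Sobolev regularity and the Kato temporal traces, with two boundary potentials for the KdV component on the left half-line) matches the paper's. However, your nonlinear step contains a genuine gap that prevents the contraction from closing. You propose to estimate $vv_x-\gamma(|u|^2)_x$ in $L^1_TH^{\kappa-1}(\R^-)$ and then ``invoke the Duhamel linear estimates.'' The Airy group is unitary on $H^m$, so the inhomogeneous operator $\mathcal{K}w=\int_0^te^{-(t-t')\partial_x^3}w\,dt'$ maps $L^1_TH^{\kappa-1}$ only into $\mathcal{C}_TH^{\kappa-1}$: you lose a full derivative and the fixed point cannot land back in $\mathcal{C}([0,T];H^{\kappa}(\R^-))$. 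For $\kappa<3/2$ the derivative in $vv_x$ cannot be paid for by an algebra estimate; it must be recovered from dispersive smoothing. This is exactly why the paper runs the iteration in the Bourgain space $X^{s,b}$ for the Schr\"odinger component and in Faminskii's modified spaces $Y^{\kappa,b,\alpha}$, $W^{\kappa,b,\alpha}$ with $b<\tfrac12$ for the KdV component, using the bilinear estimate of Lemma \ref{bilinear1} for $\partial_x(v_1v_2)$ and the mixed estimates of Lemma \ref{acoplamento1} and its companions for the coupling terms; the half-derivative gain in those mixed estimates is also what buys the full range $\kappa<s+\tfrac12$ (a plain product estimate puts $(|u|^2)_x$ only in $H^{s-1}$, i.e.\ forces $\kappa\le s$, and still with the same one-derivative loss). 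Your resolution space must therefore carry $X^{s,b}$ and $Y^{\kappa,b,\alpha}$ norms and the nonlinearities must be estimated there, not in $L^1_TH^{\sigma}$.

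A secondary but substantive issue: you propose to control the boundary potentials ``through the Riemann--Liouville fractional integration machinery used in \cite{C,CK,Holmerkdv}.'' That is precisely the machinery the paper discards at this regularity: the Bourgain-space estimates for those boundary forcing operators do not reach $s,\kappa>1/2$, which is the stated reason the authors switch methods. The paper instead uses the Laplace-transform operator $\mathcal{L}$ of \cite{Bona1,tzirakis} for the Schr\"odinger trace and Faminskii's Airy boundary potential $\mathcal{V}$ (Lemma \ref{cof}) for the KdV traces, and on the left half-line the second KdV potential for $h\in H^{\kappa/3}(\R^+)$ must likewise come from the constructions of \cite{Faminskii}. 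You correctly flag this second potential as the delicate point, but the analytic input you cite for it would not deliver the required bounds.
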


\begin{figure}[h]
	\centering 
	\begin{tikzpicture}[scale=3]
	\draw[->](-0.2,0)--(2.25,0) node[below] {$\boldsymbol{s}$ \scriptsize{(NLS)}};
	\draw[->] (0,-0.2)--(0,2) node[right] {$\boldsymbol{\kappa}$ \scriptsize{(KdV)}};
	\filldraw[color=gray!30](0.5,0.5)--(0.5,1)--(1,1.5)--(1.5,1.5)--(1.5,0.5)--(0.5,0.5);
	\draw[line width=0.5pt, dashed](0.5,0.5)--(0.5,1)--(1,1.5)--(1.5,1.5)--(1.5,0.5)--(0.5,0.5);
	\node at (1,-0.10){$1$};
	\node at (0.5,-0.11){$\frac12$};
	\node at (-0.1,0.5){$\frac{1}{2}$};
	\node at (-0.1, 1){$1$};
	\node at (-0.1,1.5){$\frac{3}{2}$};
	\node at (1.5,-0.1){$\frac{3}{2}$};
	\draw[line width=0.25pt, dashed](0,1)--(0.5,1);
	\draw[line width=0.25pt, dashed](1,0)--(1,0.5);
	\draw[line width=0.25pt, dashed](0,0.5)--(0.5,0.5)--(0.5,0);
	\draw[line width=0.25pt, dashed](1.5,0)--(1.5,0.5);
	\draw[line width=0.25pt, dashed](0,1.5)--(1.5,1.5);
	\node at (0.4,1.32){\small{$\boldsymbol{\kappa=s+1/2}$}};
	\node at (1,1){$\bullet$};
	\node at (1,0.9){\small{$H^1_{\pm}\times H^1_{\pm}$}};
    \end{tikzpicture}
	\caption{{\small Regions of local well-posedness achieved in Theorems \ref{teorema1} and \ref{teorema1left}, where $H^1_{\pm}:=H^1(\R^{\pm})$}.}
	\label{Figura-III}
\end{figure}
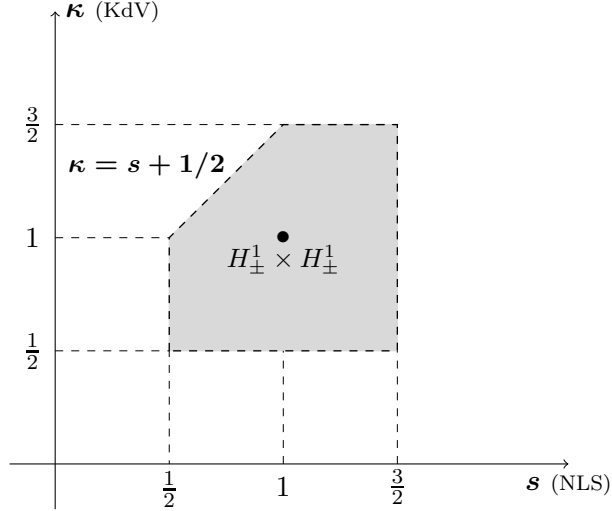
\medskip

To obtain  the results established in theorems \ref{teorema1} and \ref{teorema1left} we will follow the ideas contained in \cite{Bona1}, \cite{CL}, \cite{tzirakis} and \cite{Faminskii}, based on the Laplace transform and potential theory combined with the Fourier restriction method.  

\medskip 
Concerning global solutions we prove that positive coupled interactions on the  right half-line  ($\alpha \gamma >0$), under homogeneous boundary conditions, ensure that local solutions in the energy space can be extended for all time interval $[0, T]$. This global result follows from the laws \eqref{i1}, \eqref{i2} and \eqref{i3}, which under 
conditions $u(0,t)=g(0,t)=0$ take the following forms:
\begin{equation}\label{i10}
\mathcal{M}^{\pm}(t)=\mathcal{M}^{\pm}(0),
\end{equation}
\begin{equation}\label{i20}
\mathcal{Q}^{\pm}(t)=\mathcal{Q}^{\pm}(0)\mp\int_0^t\Big(2|u_x(0,s)|^2 +\frac{\alpha}{\gamma}v_x^2(0,s)\Big)ds
\end{equation}
and
\begin{equation}\label{i30}
\mathcal{E}^{\pm}(t):=\mathcal{E}^{\pm}(0) \mp\frac{\alpha}{2\gamma}\int_0^{t}v^2_{xx}(0,s)ds.
\end{equation}

\medskip
In what follows we use the notation  $\mathcal{M}^{\pm}(0)=\mathcal{M}^{\pm}_0$, $\mathcal{Q}^{\pm}(0):=\mathcal{Q}^{\pm}_0$ and $\mathcal{E}^{\pm}(0):=\mathcal{E}^{\pm}_0$.

\medskip 
The identities \eqref{i10}, \eqref{i20} and \eqref{i30} can be justified for solutions $u(\cdot, t)$ and $v(\cdot, t)$ in $H^1(\R^{\pm})$ by the standard regularization procedure; see for example the proof of \eqref{i10} in Section 
\ref{mass-homo-regularization}.

\medskip 
The global results in $H^1(\R^+)\times H^1(\R^+)$ reads as follows:

\begin{theorem}[\textbf{Long-time behavior in $\R^+$}]\label{Th-global-theory}
Let $\alpha, \gamma \in \R$ such that $\alpha\gamma>0$ and $(u_0,v_0)$ in the energy space $H^1(\mathbb{R}^+)\times H^1(\mathbb{R}^+) $. Then, the corresponding local solution of the IBVP \eqref{SK}  with  homogeneous boundary conditions (i.e. $f=g=0$) can be extended to all time interval $[0, T]$. In addition, for more regular data verifying 
\begin{equation}\label{Th-global-theory-a}
u_0\in L^2\big(\R^+,xdx\big) \quad \text{and} \quad \mathcal{Q}^+_0 <0,
\end{equation}
the respective solution for the Schr\"odinger component satisfies
\begin{equation}\label{Th-global-theory-b}
\big \|x^{1/2} u(\cdot, t)\big \|^2_{L^2(\R^+)}\geq |\mathcal{Q}^+_0| t+ \big \|x^{1/2} u_0\big \|^2_{L^2(\R^+)};
\end{equation}
so  $\lim\limits_{t \to +\infty}\big \|x^{1/2} u(\cdot, t)\big \|_{L^2(\R^+)}=+\infty$. 
\end{theorem}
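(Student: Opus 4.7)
The proof splits into two parts, mirroring the two claims. For the global extension, the plan is to exploit the mass and energy laws \eqref{i10} and \eqref{i30} in the homogeneous regime. Mass is exactly conserved, and since $\alpha\gamma>0$ the boundary term in \eqref{i30} has sign $-\tfrac{\alpha}{2\gamma}\leq 0$, so $\mathcal{E}^+(t)\leq\mathcal{E}_0^+$. From this I would extract an a priori $H^1\times H^1$ bound by isolating the two manifestly positive quadratic terms $|u_x|^2+\tfrac{\alpha}{2\gamma}v_x^2$ in the integrand of $\mathcal{E}^+$ and controlling the remaining pieces $\alpha v|u|^2$, $-\tfrac{\alpha}{6\gamma}v^3$, $\tfrac{\beta}{2}|u|^4$ through one-dimensional Gagliardo--Nirenberg on the half-line:
\begin{equation*}
\|u\|_{L^4(\R^+)}^4\lesssim\|u\|_{L^2}^3\|u_x\|_{L^2},\quad
\|v\|_{L^3(\R^+)}^3\lesssim\|v\|_{L^2}^{5/2}\|v_x\|_{L^2}^{1/2},\quad
\int_{\R^+}\!|v|\,|u|^2\,dx\lesssim\|v\|_{L^2}^{1/2}\|v_x\|_{L^2}^{1/2}\|u\|_{L^2}^{2}.
\end{equation*}
All derivative powers being strictly subcritical, Young's inequality absorbs these into the gradient part, producing $\|u_x\|_{L^2}^2+\|v_x\|_{L^2}^2\leq C(\mathcal{M}_0^+,\mathcal{E}_0^+)$. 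Combined with mass conservation, this uniform $H^1\times H^1$ bound lets one iterate Theorem \ref{teorema1} with uniform local time-step up to the prescribed $T$.

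For the weighted lower bound, the plan is to introduce the virial functional $V(t):=\int_{\R^+} x|u(x,t)|^2\,dx$ and derive the identity
\begin{equation*}
\frac{d}{dt}V(t)=-\mathcal{Q}^+(t)+\frac{\alpha}{\gamma}\|v(\cdot,t)\|_{L^2(\R^+)}^2.
\end{equation*}
Using $iu_t=-u_{xx}+\alpha uv+\beta u|u|^2$, the cubic and quintic nonlinearities cancel because $v|u|^2$ and $|u|^4$ are real, yielding $\partial_t|u|^2=-2\partial_x\text{Im}(\bar u u_x)$. Multiplying by $x$ and integrating by parts in space gives $\tfrac{d}{dt}V(t)=-2\int_{\R^+}\text{Im}(u\bar u_x)\,dx$, where the boundary at $x=0$ vanishes through the factor $x$ and the boundary at $+\infty$ through the $H^1$-decay; comparing with the definition \eqref{i2} of $\mathcal{Q}^+$ produces the claimed identity. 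Under the hypothesis $\alpha\gamma>0$, the moment law \eqref{i20} gives the monotone bound $\mathcal{Q}^+(t)\leq\mathcal{Q}_0^+<0$, while $\tfrac{\alpha}{\gamma}\|v\|_{L^2}^2\geq 0$; hence $\tfrac{d}{dt}V(t)\geq|\mathcal{Q}_0^+|$, and integration in time yields \eqref{Th-global-theory-b}.

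The main technical obstacle is to legitimize these manipulations at merely $H^1$-regularity with a weighted datum $u_0\in L^2(\R^+,x\,dx)$. I would handle this by a standard truncation/regularization scheme: first approximate $(u_0,v_0)$ by a sequence of smooth, compactly supported data for which the identity is a classical calculation, then localize with a smooth cutoff $\chi_R$ that equals $x$ on $[0,R]$ and is supported on $[0,2R]$, perform the above computations for the regularized virial $V_R(t):=\int_{\R^+}\chi_R(x)|u(x,t)|^2\,dx$, and pass to the limit $R\to\infty$ using the a priori $H^1\times H^1$ bounds from the first part together with the continuous dependence provided by Theorem \ref{teorema1}. Finiteness of $V(t)$ for positive times follows a posteriori from the identity itself, since its right-hand side is controlled uniformly in $t$ by $\mathcal{M}_0^+$, $\mathcal{E}_0^+$ and $|\mathcal{Q}_0^+|$.
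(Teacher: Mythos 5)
Your second part (the virial identity $\tfrac{d}{dt}\int_{\R^+}x|u|^2dx=-2\,\text{Im}\int_{\R^+}u\bar u_x\,dx=-\mathcal{Q}^+(t)+\tfrac{\alpha}{\gamma}\|v\|_{L^2(\R^+)}^2$, combined with the monotonicity $\mathcal{Q}^+(t)\le\mathcal{Q}^+_0$ coming from \eqref{i20} and the sign of $\alpha/\gamma$) is correct and is exactly the paper's argument, including the regularization remarks. The gap is in the first part: you attempt to close the a priori $H^1\times H^1$ bound using only mass conservation \eqref{i10} and the energy monotonicity from \eqref{i30}, but every Gagliardo--Nirenberg bound you invoke for the indefinite terms of $\mathcal{E}^+$ — namely $\|v\|_{L^3}^3\lesssim\|v\|_{L^2}^{5/2}\|v_x\|_{L^2}^{1/2}$ and $\int|v||u|^2\lesssim\|v\|_{L^2}^{1/2}\|v_x\|_{L^2}^{1/2}\|u\|_{L^2}^2$ — carries a factor of $\|v(\cdot,t)\|_{L^2(\R^+)}$, and this quantity is \emph{not} controlled by $\mathcal{M}^+_0$ and $\mathcal{E}^+_0$: the $L^2$ norm of the KdV component is not conserved for the coupled system (the energy density contains $v_x^2$, $v^3$, $v|u|^2$, but no $v^2$ term). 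With $\|v\|_{L^2}$ unbounded a priori, Young's inequality cannot absorb these terms into the gradient part, so your claimed bound $\|u_x\|_{L^2}^2+\|v_x\|_{L^2}^2\le C(\mathcal{M}^+_0,\mathcal{E}^+_0)$ does not follow.

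The missing ingredient is precisely the moment law \eqref{i20}, which you do use in the second part but omit here. Since $\alpha\gamma>0$, \eqref{i20} gives $\tfrac{\alpha}{\gamma}\|v\|_{L^2}^2+2\,\text{Im}\int u\bar u_x\le\mathcal{Q}^+_0$, hence by Cauchy--Schwarz and mass conservation $\|v\|_{L^2}^2\lesssim |\mathcal{Q}^+_0|+\sqrt{\mathcal{M}^+_0}\,\|u_x\|_{L^2}$. Feeding this back into your Gagliardo--Nirenberg bounds produces powers of $\|u_x\|_{L^2}$ and $\|v_x\|_{L^2}$ that are strictly subquadratic, and only then does Young's inequality close the estimate, yielding a bound of the form $C(\mathcal{M}^+_0,\mathcal{Q}^+_0,\mathcal{E}^+_0)$. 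This three-functional scheme (mass, moment, energy) is the Corcho--Linares argument that the paper follows.
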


\medskip 
Finally, we establish the more technical result obtained in this work, which concerns with the long-time behavior for the solutions in the weighted space 
$L^2\big(\R^-,\, |x|dx\big)\times L^2\big(\R^-,\, |x|dx\big)$ for negative interactions ($\alpha\gamma < 0$) and also with homogeneous boundary conditions. 

\begin{theorem}[\textbf{Long-time behavior in $\R^-$}]\label{Th-grow-time-left} 
	Consider the IBVP \eqref{SKe} with $\alpha \gamma<0$. 	Let $(u_0, v_0)$ a regular data in $H^1(\mathbb{R}^-)\times H^1(\mathbb{R}^-)$ such that 
	$$\big \||x|^{1/2}u_0 \big\|_{L^2(\mathbb{R}^-)} + \big\||x|^{1/2}v_0\big\|_{L^2(\mathbb{R}^-)} < \infty$$
	and suppose that the corresponding solution is defined for all  time.
	Then, 
	\begin{enumerate}
		\item[(a)] if\; $\mathcal{Q}^-_0>0$\; we have\;
		$\big\||x|^{1/2}u\big\|^2_{L^2(\mathbb{R}^-)}\ge \mathcal{Q}_0^-t + \big\||x|^{1/2}u_0\big\|^2_{L^2(\mathbb{R}^-)}.$
		
		\medskip 
		\item[(b)] if\; $ \mathcal{Q}^-_0 > 8\mathcal{E}^{-}_0$ and $\beta \ge 2\big|\alpha \gamma|$ we have
		$$\lim\limits_{t\to +\infty}\frac{1}{t^{1-}}\sup\limits_{t\in [0,\,t]}\left(\big\||x|^{1/2}u(\cdot, t)\big\|_{L^2(\mathbb{R}^-)}+ \big\||x|^{1/2}v(\cdot, t)\big\|_{L^2(\mathbb{R}^-)}\right)=+\infty.$$
	\end{enumerate}
\end{theorem}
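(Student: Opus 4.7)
The plan is to derive virial-type identities for the weighted $L^2$ quantities $V(t):=\int_{\R^-}|x||u|^2\,dx$ and $W(t):=\int_{\R^-}|x|v^2\,dx$ and combine them with the mass, moment, and energy laws \eqref{i10}--\eqref{i30} adapted to the homogeneous boundary conditions $f=g=h=0$.

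For part (a), I start from the local mass conservation $\partial_t|u|^2=-2\partial_x\text{Im}(\bar u u_x)$, obtained by multiplying the Schr\"odinger equation in \eqref{SKe} by $\bar u$ and taking the imaginary part (the nonlinearities $\alpha uv$ and $\beta u|u|^2$ contribute no imaginary part). One integration by parts against the weight $|x|=-x$---the $x=0$ boundary term vanishing thanks to $u(0,t)=0$---yields
$$V'(t)=2\int_{\R^-}\text{Im}(u\bar u_x)\,dx=\mathcal{Q}^-(t)-\frac{\alpha}{\gamma}\int_{\R^-}v^2\,dx,$$
the second equality being just the definition of the moment in \eqref{i2}. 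The hypothesis $\alpha\gamma<0$ makes $-(\alpha/\gamma)\int v^2\,dx$ nonnegative, and the homogeneous conditions $v(0,t)=v_x(0,t)=0$ collapse \eqref{i2-b} to $\mathcal{Q}^v\equiv 0$, so \eqref{i20} reduces to $\mathcal{Q}^-(t)=\mathcal{Q}_0^-+2\int_0^t|u_x(0,s)|^2\,ds\geq\mathcal{Q}_0^-$. Combining, $V'(t)\geq\mathcal{Q}_0^-$, and integrating in time delivers (a).

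For part (b), I additionally differentiate $W(t)$. Using the KdV equation in \eqref{SKe} and three integrations by parts against the weight $|x|$ (each boundary contribution killed by the homogeneous BC) one obtains
$$W'(t)=3\int_{\R^-}v_x^2\,dx-\tfrac{2}{3}\int_{\R^-}v^3\,dx-2\gamma\int_{\R^-}xv(|u|^2)_x\,dx.$$
The next step is to form a combined virial $\Phi(t):=V(t)+\theta W(t)$ for a positive $\theta$ to be chosen, and to bring in the energy identity \eqref{i30}, which under $\alpha\gamma<0$ reduces (since the boundary integrand $\tfrac{\alpha}{2\gamma}v_{xx}^2(0,s)$ is nonpositive) to the monotonicity $\mathcal{E}^-(t)\leq\mathcal{E}_0^-$. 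The condition $\beta\geq 2|\alpha\gamma|$ is exactly the threshold that lets the sign-indefinite cross term $\alpha v|u|^2$ inside the energy be absorbed by $\tfrac{\beta}{2}|u|^4$ via Young's inequality, producing a coercive bound of the type $\int|u_x|^2+\tfrac{\beta}{4}|u|^4\,dx\leq\mathcal{E}_0^-+(\text{controlled }v\text{-terms})$. Substituting this back into $V'$ and $W'$ and carrying out the ``satisfactory algebraic manipulation'' advertised in the abstract should yield an inequality of the form
$$\Phi'(t)\geq c_1\bigl(\mathcal{Q}_0^--8\mathcal{E}_0^-\bigr)+c_2\int_0^t\eta(s)\,ds+c_3,$$
with $\eta(s)\geq 0$ a coercive space integral (proportional either to $|u_x(0,s)|^2$ or to $\|v_x(\cdot,s)\|_{L^2}^2$) and $c_1,c_2,c_3>0$; the numerical factor $8$ is traceable to the constants $3$, $2/3$, $2\gamma$ produced by the integrations by parts in $W'(t)$, combined with the tuning of $\theta$. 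The hypothesis $\mathcal{Q}_0^->8\mathcal{E}_0^-$ then makes the first piece strictly positive, and the subsequent step is to argue that the running integral $\int_0^t\eta(s)\,ds$ must grow unboundedly---otherwise the energy and Gagliardo--Nirenberg inequalities would conflict with the standing assumption that the solution is globally defined. Integrating once more upgrades this lower bound on $\Phi'$ into a super-linear bound on $\Phi(t)$; since $\||x|^{1/2}u\|_{L^2}+\||x|^{1/2}v\|_{L^2}\geq\sqrt{(V(t)+W(t))/\max(1,\theta)}$, the super-linear divergence transfers to the left-hand side of the statement.

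The main obstacle is pinning down the exact constant $8$ together with the admissible value of $\theta$. This is a careful bookkeeping task requiring one to track the numerical coefficients produced by the several integrations by parts, the coercivity constant coming from the $\beta\geq 2|\alpha\gamma|$ Young-type inequality, and the signed contribution of $\mathcal{E}^-$, in such a way that the weighted interaction term $-2\gamma\int xv(|u|^2)_x\,dx$ and the indefinite cubic $-\tfrac{2}{3}\int v^3\,dx$ (the latter controlled via Gagliardo--Nirenberg together with the mass conservation \eqref{i10}) are absorbed into nonnegative pieces. A secondary technical issue is rigorously justifying the formal integrations by parts on $\R^-$, which is where the hypothesis of ``regular'' initial data enters, via a standard density/regularization argument of the same flavor as the one already sketched for \eqref{i10} in the paper.
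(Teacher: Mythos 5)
Your part (a) coincides with the paper's proof: the first--moment identity $\frac{d}{dt}\int_{\R^-}|x||u|^2\,dx=2\,\mathrm{Im}\int_{\R^-}u\bar u_x\,dx$, the moment law \eqref{i20} with $h=0$, and the sign of $\alpha/\gamma$ give exactly \eqref{grow-time-left-proof-a1}. Part (b), however, has a genuine gap. Your plan rests on a first-order inequality for $\Phi=V+\theta W$ built from the \emph{first} moments, but the derivative $W'(t)$ you compute contains the weighted interaction term $-2\gamma\int_{\R^-}xv(|u|^2)_x\,dx$, and none of the tools you invoke (monotonicity of $\mathcal{E}^-$, Young, Gagliardo--Nirenberg together with mass conservation) can control it: the energy and mass laws involve only unweighted integrals, while the weight $x$ is unbounded on $\R^-$, and no choice of $\theta$ cancels this term against anything in $V'$, which contains no weighted interaction at all. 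The ``satisfactory algebraic manipulation'' lives elsewhere: the paper takes the \emph{second} moment $\int_{\R^-}|x|^2|u|^2\,dx$ (Glassey's virial), in whose second time derivative the weighted interaction $\alpha\,\mathrm{Re}\int xv\bar uu_x\,dx$ appears and is rewritten, via the KdV equation, as $\frac{\alpha}{4\gamma}\frac{d}{dt}\int xv^2\,dx$ plus \emph{unweighted} terms (see \eqref{A2}). Adding the correction terms $\int_0^t\!\int x|u|^2$ and $-\frac{2\alpha}{\gamma}\int_0^t\!\int xv^2$ to form $\eta(t)$ as in \eqref{sign} absorbs both this total derivative and the residual $-2\,\mathrm{Im}\int u\bar u_x$, and $\eta''$ becomes expressible through $\mathcal{E}^-$ and $\mathcal{Q}^-$ alone (identity \eqref{viriel2}). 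The factor $8$ is the Glassey coefficient $8\int|u_x|^2$ matched against the $8\int|u_x|^2$ inside $8\mathcal{E}^-_0$; it is not produced by the coefficients $3$, $2/3$, $2\gamma$ in $W'$ as you conjecture.

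Two further problems with your closing steps. First, the claim that ``the running integral must grow unboundedly --- otherwise \dots\ would conflict with the standing assumption that the solution is globally defined'' is circular: global existence is a hypothesis of the theorem, so no growth can be extracted from a threatened blow-up. In the paper the growth comes from concavity: under $\mathcal{Q}^-_0>8\mathcal{E}^-_0$ and $\beta\ge2|\alpha\gamma|$ one gets $\eta''(t)<8\mathcal{E}^-_0-\mathcal{Q}^-_0<0$, so $\eta(t)$ lies below a downward parabola; since the second-moment part of $\eta$ is nonnegative while the two time-integrated first moments are nonpositive on $\R^-$, the latter must diverge like $-t^2$, i.e.\ $\int_0^tP_{u,v}(s)\,ds\gtrsim t^2$, whence $\sup_{[0,t]}P_{u,v}\gtrsim t$. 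Second, even granting your (underived) inequality $\Phi'(t)\ge c_1(\mathcal{Q}^-_0-8\mathcal{E}^-_0)+c_3$, it would yield pointwise linear growth of $V+\theta W$, a conclusion of a different (supremum-free) nature from the one stated; as written, the central estimate of part (b) is asserted rather than proved.
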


\begin{remark}
Notice  that the conclusion in (b) above implies that
$$\lim\limits_{t\to +\infty}\big(\,\big\||x|^{1/2}u(\cdot, t)\big\|+ \big\||x|^{1/2}v(\cdot, t)\big\|\,\big)=+\infty.$$
\end{remark}

\medskip 
To prove Theorem \ref{Th-grow-time-left} we will apply the virial technique developed by Glassey  in  \cite{glassey} in the context of nonlinear Schr\"odinger equations (NLS). The functional variance
\begin{equation}
t\longmapsto \int_{\R} |x|^2|u(x,t)|^2dx,
\end{equation}
used in the NLS case, will be replaced here by
\begin{equation}\label{sign}
t\longmapsto \eta(t):=\int_{\R^-} |x|^2|u(x,t)|^2dx +\int_{0}^t\int_{\R^-}x|u|^2(x,t)dx -\frac{2\alpha}{\gamma}\int_{0}^t\int_{\R^-}xv^2(x,t)dx.
\end{equation}

A crucial point to note here is that our approach doesn't work in context of all line $\R$, since in this case we wouldn't have a defined sign for the corresponding 
integral terms $\displaystyle \int_{\R}xv^2(x,t)dx$ and  $\displaystyle \int_{\R}x|u|^2(x,t)dx$.

\begin{remark} We now point out two important remarks:
	
\medskip 
\begin{enumerate}
\item [(a)] We believe that the same approach used to prove Theorem \ref{Th-grow-time-left} can provide similar results for other nonlinear dispersive systems on the half-line, for example, the quadratic Schr\"odinger system \cite{Barbosa} or the coupled multicomponent NLS-gKorteweg-de Vries system \cite{BCP}.\medskip

\item [(b)] The problem about the possible blow-up of $H^1$-norm for solutions to the IBVP problems \eqref{SK} and \eqref{SKe}, under certain initial-boundary conditions,
remains open. 
\end{enumerate}
\end{remark}

We finally notice that there exists in the literature some  results concerns the study of the dynamic for some nonlinear dispersive equations on the half-line. For instance, about the nonlinear Schr\"odinger equation we cite the works of Kalantarov and \"Ozsari \cite{Ozsari} and Erdogan and Tzirakis \cite{Tzirakis}. Finally, in \cite{Dias1} and \cite{Dias} was studied the dynamics of solutions for the Benney system.

\subsection{Structure of the paper.} This work is organized as follow: in the next section we discuss notations, introduce important function spaces and review  
some properties of the linear group associated to the Schr\"odinger and Korteweg - de Vries equation. Local theory  is developed in \ref{3}. Sections \ref{4} and \ref{5} are devoted to the proof  of theorems \ref{Th-global-theory} and \ref{Th-grow-time-left}, respectively.

\section{\textbf{Preliminaries}}
First we introduce some notations and functions spaces. The $L^2$ - based Sobolev space on the line $\R$ is denoted by $H^s(\R)$ with the norm
$\|\phi\|_{H^s(\mathbb{R})}=\|\langle\xi\rangle^s\widehat{\phi}(\xi)\|_{L^2(\mathbb{R})}$, where $\langle\xi\rangle=1+|\xi|$ and $\widehat{\phi}$ denotes the Fourier transform of
of $\phi$: 
$$\displaystyle \widehat{\phi}(\xi)=\int_{\R} e^{-i\xi x}\phi(x)dx.$$
The norm in the homogeneous space $\dot{H}^s(\R)$ is given by $\|\phi\|_{\dot{H}^s(\R)}=\|\mathscr{D}^s\phi\|_{L^2(\R)}$, where
$\widehat{\mathscr{D}^s\phi}(\xi)=\xi^s\widehat{\phi}(\xi).$ Also, 
$$\widehat{u}(\xi,\tau)=\int_{\R^2} e^{-i(\xi x+\tau t)}u(x,t)dxdt$$
denotes the space-time Fourier transform of $u$; eventually we use $\mathscr{F}_{x}u(\xi,t)$  and $\mathscr{F}_{t}u(x,\tau)$  to denote the space and times Fourier transform of $u$, respectively. 

\medskip 
The notations $f \lesssim  g$ and $f \gtrsim  g$ means that there is a positive constant $C$ such that the functions $f$ and $g$ verify the relations  $|f| \leq C |g|$ and 
$|f| \ge C |g|$, respectively. The characteristic function of an arbitrary set $A$ is denoted by $\chi_{A}$ and throughout the paper, we fix a cutoff function
$\psi \in \mathcal{C}_0^{\infty}(\mathbb{R})$ such that
$$
\psi(t)=
\begin{cases}
1 & \text{if}\; |t|\le 1,\\
0 & \text{if}\; |t|\ge 2
\end{cases}
$$ 
and $\psi_{\delta}(t):=\psi(\frac{t}{\delta})$ for any $\delta >0$.

\medskip 
Let a nonnegative number $s\geq 0$. We say that $\phi \in H^s(\mathbb{R}^+)$ if exists $\tilde{\phi}\in H^s(\mathbb{R})$ such that 
$\phi=\tilde{\phi}\big|_{\R+}$  a.e.  In this case we set $\|\phi\|_{H^s(\mathbb{R}^+)}:=\inf\limits_{\tilde{\phi}}\|\tilde{\phi}\|_{H^{s}(\mathbb{R})}.$
Moreover, we say that  $\phi \in H_0^s(\mathbb{R}^+)$ if $\phi \in H^s(\R^+)$ and $\text{supp} (\phi) \subset[0,+\infty)$. For $s<0$, $H^s(\mathbb{R}^+)$ and $H_0^s(\mathbb{R}^+)$  are defined as the dual space of $H_0^{-s}(\mathbb{R}^+)$ and  $H^{-s}(\mathbb{R}^+)$, respectively.

\medskip 
Moreover we consider the classes
$$\mathcal{C}_0^{\infty}(\mathbb{R}^+)=\big\{\phi\in \mathcal{C}^{\infty}(\mathbb{R});\; \text{supp}(\phi) \subset [0,+\infty)\big\}$$
and $\mathcal{C}_{0,c}^{\infty}(\mathbb{R}^+)$ as those members of $\mathcal{C}_0^{\infty}(\mathbb{R}^+)$ with compact support and also we recall 
that the space $\mathcal{C}_{0,c}^{\infty}(\mathbb{R}^+)$ is dense in $H_0^s(\mathbb{R}^+)$ for all $s\in \mathbb{R}$.

\medskip 
In the case of left half-line, the dentition for $H^s(\R^-)$ and  $H^s_0(\R^-)$ is given in a similar way to cases $H^s(\R^+)$ and  $H^s_0(\R^+)$.

\medskip 
The following results summarize useful properties of the Sobolev spaces on the half-line and we refer  \cite{CK} for the proofs.
\begin{lemma}\label{sobolev-estimates}
For all $\phi \in H^s(\mathbb{R})$ with  $-\frac{1}{2}<s<\frac{1}{2}$  we have
	\begin{equation}\label{sobolev-estimates-a}
	\|\chi_{\R^+}\phi\|_{H^s(\mathbb{R})}\le c_s\|\phi\|_{H^s(\mathbb{R})}.
	\end{equation}
Furthermore, if $0\leq s<\frac{1}{2}$ it holds that
\begin{align}
	&\|\psi \phi\|_{H^s(\mathbb{R})}\leq c_{\psi, s} \|\phi\|_{\dot{H}^{s}(\mathbb{R})},\label{sobolev-estimates-b}\\
	&\|\psi \phi\|_{\dot{H}^{-s}(\mathbb{R})}\leq c_{\psi, s} \|\phi\|_{H^{-s}(\mathbb{R})}\label{sobolev-estimates-c}.
\end{align}
\end{lemma}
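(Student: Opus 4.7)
The plan is to prove the three multiplier estimates separately: (a) via a Hilbert-transform argument (or equivalently a Sobolev--Slobodeckij double-integral decomposition combined with a one-sided Hardy inequality), (b) via Hardy's inequality together with a fractional Leibniz splitting, and (c) by duality from (b). The universal ingredient is the Hardy-type inequality
\begin{equation*}
\int_{\mathbb{R}}\frac{|\phi(x)|^2}{|x|^{2s}}\,dx\le c_s\|\phi\|_{\dot H^s(\mathbb{R})}^2,\qquad 0<s<\tfrac12,
\end{equation*}
which is precisely what forces the sharp window $s<\tfrac12$ in every part of the lemma.

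For (a), write $\chi_{\mathbb{R}^+}=\tfrac12(1+\mathrm{sgn})$. Since the identity trivially preserves $H^s$, it suffices to bound the multiplier $\phi\mapsto\mathrm{sgn}(x)\phi(x)$ on $H^s(\mathbb{R})$ for $|s|<\tfrac12$. On the Fourier side this operator is, up to a constant, the Hilbert transform applied to $\widehat\phi$, and the Hilbert transform is bounded on the weighted space $L^2(\mathbb{R},\langle\xi\rangle^{2s}d\xi)$ in the stated range. A more hands-on alternative for $0<s<\tfrac12$ uses the equivalence
\begin{equation*}
\|f\|_{H^s}^2\sim \|f\|_{L^2}^2+\iint\frac{|f(x)-f(y)|^2}{|x-y|^{1+2s}}\,dx\,dy;
\end{equation*}
plugging in $f=\chi_{\mathbb{R}^+}\phi$ and splitting the double integral according to the four sign regions of $(x,y)$, the only delicate piece is $\{x>0,\,y<0\}$, where $|f(x)-f(y)|^2=|\phi(x)|^2$ and the kernel is controlled by $|x|^{-1-2s}$ after integration in $y$; the one-sided Hardy inequality closes the estimate. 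The range $-\tfrac12<s<0$ then follows by duality, since $\chi_{\mathbb{R}^+}$ is self-adjoint.

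For (b), the case $s=0$ is trivial by boundedness of $\psi$. For $0<s<\tfrac12$ I would treat the $L^2$ and $\dot H^s$ pieces of $\|\psi\phi\|_{H^s}$ separately. The $L^2$ part uses $\mathrm{supp}\,\psi\subset[-2,2]$ and Hardy:
\begin{equation*}
\|\psi\phi\|_{L^2}^2\le C\int_{|x|\le 2}|\phi|^2\,dx\le C\cdot 2^{2s}\int\frac{|\phi(x)|^2}{|x|^{2s}}\,dx\le C'\|\phi\|_{\dot H^s}^2.
\end{equation*}
For the homogeneous seminorm I split $\psi(x)\phi(x)-\psi(y)\phi(y)=\psi(x)(\phi(x)-\phi(y))+\phi(y)(\psi(x)-\psi(y))$; the first piece contributes at most $\|\psi\|_{L^\infty}\|\phi\|_{\dot H^s}$, while in the second piece the Lipschitz bound $|\psi(x)-\psi(y)|\le C\min(|x-y|,1)$ (cutting off for $|y|\gg 1$) makes the inner integral in $x$ uniformly controlled by a weight of type $|y|^{-2s}\chi_{|y|\le R}+\langle y\rangle^{-1-2s}$, so the outer integral in $|\phi(y)|^2$ reduces once more to the same Hardy bound.

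Estimate (c) then follows by duality: for every Schwartz $g$,
\begin{equation*}
\Bigl|\int(\psi\phi)g\,dx\Bigr|=\Bigl|\int\phi\,(\psi g)\,dx\Bigr|\le\|\phi\|_{H^{-s}}\|\psi g\|_{H^s}\le c_{\psi,s}\|\phi\|_{H^{-s}}\|g\|_{\dot H^s},
\end{equation*}
where the last inequality is (b) applied to $g$; taking the supremum over $g$ with $\|g\|_{\dot H^s}\le 1$ yields the bound. The main technical obstacle is the bookkeeping for the cross-term argument in (b) and for the off-diagonal sign region in (a): one must make sure that after splitting the double integrals the resulting weights sit exactly at the critical exponent where Hardy still applies, which breaks down at $s=\tfrac12$; this is precisely why the restriction cannot be relaxed, and matches the well-known failure of $\chi_{\mathbb{R}^+}$ as an $H^{1/2}$-multiplier. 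As the paper notes, full details are in \cite{CK}, so the authors presumably only quote the statement.
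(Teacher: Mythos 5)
Your proposal is correct, but note that the paper does not prove this lemma at all: it simply states it and refers to \cite{CK} (Colliander--Kenig), where these facts appear as preliminary lemmas, the cut-off estimate \eqref{sobolev-estimates-a} itself going back to Jerison--Kenig. So there is nothing in the paper to compare against line by line; what you have written is a legitimate self-contained proof, and it follows the standard route one finds in those references. Your three mechanisms are all sound: for \eqref{sobolev-estimates-a}, either the $A_2$-weight argument (boundedness of the Hilbert transform on $L^2(\langle\xi\rangle^{2s}d\xi)$, which holds exactly for $|s|<\tfrac12$) or the Gagliardo--Slobodeckij decomposition with the one-sided Hardy inequality on the region $\{x>0,\,y<0\}$, plus duality for negative $s$; for \eqref{sobolev-estimates-b}, the Leibniz splitting of the seminorm together with Hardy for both the $L^2$ piece and the cross term; and for \eqref{sobolev-estimates-c}, duality from \eqref{sobolev-estimates-b} using that $\psi$ is real. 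The only slip is cosmetic: in the sign-region computation the $y$-integration of $|x-y|^{-1-2s}$ over $y<0$ produces the weight $c_s|x|^{-2s}$, not $|x|^{-1-2s}$; this is exactly the weight Hardy's inequality handles for $0<s<\tfrac12$, so your conclusion stands, and your closing remark correctly identifies why the endpoint $s=\tfrac12$ is excluded. The added value of your write-up over the paper's citation is precisely that it makes visible where the restriction $|s|<\tfrac12$ enters.
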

\begin{lemma}\label{sobolev-estimates-0}
For all $\phi \in H_0^s(\mathbb{R}^+)$ with $s\in \R$ we have 
\begin{equation}\label{sobolev-estimates-0-a}
\|\psi \phi\|_{H_0^s(\mathbb{R}^+)}\leq c \|\phi\|_{H_0^s(\mathbb{R}^+)}.
\end{equation}	
Furthermore, if $\frac{1}{2}<s<\frac{3}{2}$ we have $H_0^s(\R^+)=\big\{\phi\in H^s(\R^+);\, \phi(0)=0\big\}$ and it holds that 
\begin{equation}\label{sobolev-estimates-0-b}
\|\chi_{\R^+}\phi\|_{H_0^s(\R^+)}\leq c_s \|\phi\|_{H^s(\R^+)}.
\end{equation}
\end{lemma}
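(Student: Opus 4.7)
The plan is to reduce both estimates to multiplier estimates on $H^t(\R)$, using that $\psi\in\mathcal{C}_0^\infty(\R)$ is a Schwartz function and therefore acts boundedly on $H^t(\R)$ for every $t\in\R$, combined — for \eqref{sobolev-estimates-0-b} — with the trace characterization of $H_0^s(\R^+)$ in the range $\tfrac12<s<\tfrac32$ and the $|s-1|<\tfrac12$ instance of Lemma~\ref{sobolev-estimates}.

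For \eqref{sobolev-estimates-0-a} I split according to the sign of $s$. For $s\ge 0$, the zero-extension $\phi^\ast$ of $\phi\in H_0^s(\R^+)$ belongs to $H^s(\R)$ with $\|\phi^\ast\|_{H^s(\R)}=\|\phi\|_{H_0^s(\R^+)}$; since $\psi\phi^\ast$ is still supported in $[0,\infty)$, it is the zero-extension of $\psi\phi$, so the $H^s(\R)$-multiplier property of $\psi$ yields \eqref{sobolev-estimates-0-a}. For $s<0$ I argue by duality, writing $H_0^s(\R^+)=(H^{-s}(\R^+))^\ast$. Given $\varphi\in H^{-s}(\R^+)$ and any extension $\tilde\varphi\in H^{-s}(\R)$, the product $\psi\tilde\varphi$ is an extension of $\psi\varphi$; the $H^{-s}(\R)$-multiplier bound followed by infimization over extensions gives $\|\psi\varphi\|_{H^{-s}(\R^+)}\lesssim\|\varphi\|_{H^{-s}(\R^+)}$, and then
\[
\|\psi\phi\|_{H_0^s(\R^+)}=\sup_{\|\varphi\|_{H^{-s}(\R^+)}\le 1}|\la\phi,\psi\varphi\ra|\lesssim\|\phi\|_{H_0^s(\R^+)}.
\]

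For the second part, the identity $H_0^s(\R^+)=\{\phi\in H^s(\R^+):\phi(0)=0\}$ for $\tfrac12<s<\tfrac32$ follows from the continuity of the trace $\phi\mapsto\phi(0)\colon H^s(\R^+)\to\mathbb{C}$ when $s>\tfrac12$ together with a standard odd-reflection argument that represents every such $\phi$ with $\phi(0)=0$ as the $H^s$-limit of elements of $\mathcal{C}_{0,c}^\infty(\R^+)$ when $s<\tfrac32$. For inequality \eqref{sobolev-estimates-0-b}, pick $\phi\in H^s(\R^+)$ with $\phi(0)=0$ (otherwise the jump at the origin forces $\chi_{\R^+}\phi\notin H^s(\R)$ and the left-hand side is infinite) and choose an extension $\tilde\phi\in H^s(\R)$ with $\|\tilde\phi\|_{H^s(\R)}\le 2\|\phi\|_{H^s(\R^+)}$. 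Since $s>\tfrac12$, the continuous representative of $\tilde\phi$ satisfies $\tilde\phi(0)=\phi(0)=0$, and therefore
\[
(\chi_{\R^+}\tilde\phi)'=\chi_{\R^+}\tilde\phi'+\tilde\phi(0)\delta_0=\chi_{\R^+}\tilde\phi'
\]
as distributions. Because $\tilde\phi'\in H^{s-1}(\R)$ with $s-1\in(-\tfrac12,\tfrac12)$, estimate \eqref{sobolev-estimates-a} gives $\|\chi_{\R^+}\tilde\phi'\|_{H^{s-1}(\R)}\lesssim\|\tilde\phi'\|_{H^{s-1}(\R)}$; combining with the trivial bound $\|\chi_{\R^+}\tilde\phi\|_{L^2(\R)}\le\|\tilde\phi\|_{L^2(\R)}$ and the equivalent norm $\|f\|_{H^s(\R)}^2\simeq\|f\|_{L^2(\R)}^2+\|f'\|_{H^{s-1}(\R)}^2$ yields $\|\chi_{\R^+}\tilde\phi\|_{H^s(\R)}\lesssim\|\phi\|_{H^s(\R^+)}$, which is the desired $H_0^s(\R^+)$-bound since $\chi_{\R^+}\tilde\phi$ is supported in $[0,\infty)$.

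The main technical point will be justifying the distributional identity $(\chi_{\R^+}\tilde\phi)'=\chi_{\R^+}\tilde\phi'$ under the single hypothesis $\tilde\phi\in H^s(\R)$ with $\tilde\phi(0)=0$; this I would handle by approximating $\tilde\phi$ in $H^s(\R)$ by Schwartz functions vanishing at the origin, for which the identity is elementary, and passing to the limit using \eqref{sobolev-estimates-a} on the derivatives.
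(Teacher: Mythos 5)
Your proof is correct, but note that the paper itself does not prove this lemma at all: it is stated as a known property of Sobolev spaces on the half-line and the reader is referred to Colliander--Kenig \cite{CK}. What you have written is essentially the standard argument from that reference, reconstructed correctly. Estimate \eqref{sobolev-estimates-0-a} for $s\ge 0$ is indeed just the $H^s(\R)$-multiplier property of $\psi\in\mathcal{C}_0^{\infty}$ applied to the zero extension (with the harmless caveat that the identity $\|\phi^{\ast}\|_{H^s(\R)}=\|\phi\|_{H_0^s(\R^+)}$ presumes the usual convention that the $H_0^s$-norm is the $H^s(\R)$-norm of the representative supported in $[0,\infty)$, which is the convention the paper implicitly uses), and your duality reduction for $s<0$ matches the paper's definition of $H_0^s(\R^+)$ as $(H^{-s}(\R^+))^{\ast}$. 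For \eqref{sobolev-estimates-0-b}, the reduction to one derivative lower via $(\chi_{\R^+}\tilde\phi)'=\chi_{\R^+}\tilde\phi'$, the norm equivalence $\|f\|_{H^s}^2\simeq\|f\|_{L^2}^2+\|f'\|_{H^{s-1}}^2$ (valid since $s>0$), and the invocation of \eqref{sobolev-estimates-a} with $s-1\in(-\tfrac12,\tfrac12)$ is exactly the right mechanism, and you correctly isolate the only genuine technical point, namely that $\chi_{\R^+}\tilde\phi'$ must be \emph{defined} through \eqref{sobolev-estimates-a} and the identity justified by approximation with smooth functions vanishing at the origin; your scheme for this (correcting $g_n$ by $g_n(0)\rho$ and using trace continuity for $s>\tfrac12$) closes the gap. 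One small simplification worth noting: for the identification $H_0^s(\R^+)=\{\phi\in H^s(\R^+):\phi(0)=0\}$ you do not really need an odd-reflection density argument --- the inclusion $\supset$ is an immediate corollary of \eqref{sobolev-estimates-0-b} itself (the truncation $\chi_{\R^+}\tilde\phi$ is an $H^s(\R)$ extension supported in $[0,\infty)$), while $\subset$ follows from continuity of the trace for $s>\tfrac12$.
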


We denote by $X^{s,b}$  the so called Bourgain spaces associated to linear Schr\"odinger equation, more precisely, $X^{s,b}$ is the completion of the Schwartz space 
$\mathscr{S}'(\R^2)$ with respect to the norm
\begin{equation*}\label{Bourgain-norm}
\|w\|_{X^{s,b}}=\|\langle\xi\rangle^s\langle\tau+\xi^2\rangle^b\widehat{w}(\xi,\tau) \|_{L_{\tau}^2L^2_{\xi}}.
\end{equation*}
To obtain the local theory  we also need to define the following auxiliary modified Bourgain spaces introduced by Faminskii \cite{Faminskii} in the context of Korteweg-de Vries equation. For $\kappa\geq 0$,\,  $0< b <\frac12$ and  $\frac12 < \alpha < \frac23$ define the space $Y^{\kappa,b,\alpha}$ and $W^{\kappa,b,\alpha}$  as the completion of $\mathscr{S}'(\mathbb{R}^2)$ with respect to the norms
\begin{equation}
\|w\|_{Y^{\kappa,b,\alpha}}=\big\|\big(1+|\xi|+|\tau|^{\frac13}\big)^{\kappa}\mu(\xi,\tau)\hat{w}(\xi,\tau)\big\|_{L_{\tau}^2L^2_{\xi}},
\end{equation}
where
\begin{equation*}
\mu(\xi,\tau)=(1+|\tau-\xi^3|)^b+\chi_{[-1,1]}(\xi)(1+|\tau|)^{\alpha}
\end{equation*}
and
\begin{equation}
\|w\|_{W^{\kappa,b,\alpha}}=\big\|\big(1+|\xi|+|\tau|^{\frac13}\big)^{\kappa}\nu(\xi,\tau)\hat{w}(\xi,\tau)\big\|_{L_{\tau}^2L^2_{\xi}},
\end{equation}
where
\begin{equation*}
\nu(\xi,\tau)=\frac{1}{(1+|\tau-\xi^3|)^b}+\frac{\chi_{[-1,1]}(\xi)}{(1+|\tau|)^{1-\alpha}}
\end{equation*}

\medskip 
Next we summarize some useful estimates valid in these spaces.

\begin{lemma}\label{Xsb-estimates-GTV}
Let $s\in \R$ and $b, b'$ real numbers verifying the relations: $-\frac{1}{2}< b'< b\leq 0$  or $0\leq b'<b<\frac{1}{2}.$
Then, there exists a positive constant only depending on $\psi, s, b$ and  $b'$ such that 
\begin{equation*}
\|\psi_{T}w\|_{ X^{s,b'}}\leq c T^{b-b'}\|w\|_{ X^{s,b}}
\end{equation*}
for any $w\in  X^{s,b}$. 
\end{lemma}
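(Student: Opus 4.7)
The plan is to reduce Lemma \ref{Xsb-estimates-GTV} to a scalar time-Sobolev multiplier estimate and then to establish that scalar estimate in each of the two prescribed regimes, following the strategy of Ginibre-Tsutsumi-Velo. Let $U(t)=e^{it\partial_x^2}$; using the standard identity $\|w\|_{X^{s,b}}=\|U(-t)w\|_{H^b_tH^s_x}$ together with the observation that $\psi_T(t)$ depends only on $t$ and hence commutes with $U(-t)$ (which acts purely in $x$ at each fixed $t$), one obtains $\|\psi_T w\|_{X^{s,b'}}=\|\psi_T\,U(-t)w\|_{H^{b'}_tH^s_x}$. Taking the Fourier transform in $x$ and applying Fubini-Tonelli, the inequality reduces to the one-dimensional statement
\begin{equation*}
\|\psi_T g\|_{H^{b'}(\R)}\le c\,T^{b-b'}\|g\|_{H^b(\R)}\qquad \text{for all } g\in H^b(\R),
\end{equation*}
with $c$ depending only on $\psi,b,b'$.

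In the range $0\le b'<b<1/2$, I would first prove the $L^2$ endpoint: by H\"older's inequality applied with the conjugate pair $(1/b,\,2/(1-2b))$ (whose reciprocals sum to $1/2$), combined with the Sobolev embedding $H^b(\R)\hookrightarrow L^{2/(1-2b)}(\R)$ that holds in this range and the scaling identity $\|\psi_T\|_{L^{1/b}}=T^b\|\psi\|_{L^{1/b}}$, one gets $\|\psi_T g\|_{L^2}\le cT^b\|g\|_{H^b}$ (the case $b=0$ being trivial via $\|\psi_T\|_{L^\infty}=\|\psi\|_{L^\infty}$). To lift this to $H^{b'}$ with $0<b'<b$, I would decompose $g=g_{\mathrm{lo}}+g_{\mathrm{hi}}$ at the Fourier scale $1/T$: on $g_{\mathrm{lo}}$ Bernstein's inequality gives $\|\psi_T g_{\mathrm{lo}}\|_{H^{b'}}\lesssim T^{-b'}\|\psi_T g_{\mathrm{lo}}\|_{L^2}$, which combines with the endpoint to yield $T^{b-b'}\|g\|_{H^b}$; on $g_{\mathrm{hi}}$ the bound $\|g_{\mathrm{hi}}\|_{L^2}\le T^b\|g\|_{H^b}$ together with a fractional product rule absorbing the $b'$ derivatives also produces $T^{b-b'}\|g\|_{H^b}$.

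The case $-1/2<b'<b\le 0$ then follows from the previous one by duality: write
\begin{equation*}
\|\psi_T g\|_{H^{b'}}=\sup_{\|\phi\|_{H^{-b'}}\le 1}|\langle g,\psi_T\phi\rangle|\le \|g\|_{H^b}\sup_{\|\phi\|_{H^{-b'}}\le 1}\|\psi_T\phi\|_{H^{-b}},
\end{equation*}
and since the exponents $(-b,-b')$ lie in $0\le -b<-b'<1/2$, the estimate just proved (applied with $(-b',-b)$ in place of $(b,b')$) gives $\|\psi_T\phi\|_{H^{-b}}\le cT^{(-b')-(-b)}\|\phi\|_{H^{-b'}}=cT^{b-b'}\|\phi\|_{H^{-b'}}$.

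The main obstacle in this plan is the passage from the endpoint $L^2$-bound to an $H^{b'}$-bound with $b'>0$ in the first regime. The naive multiplier inequality $\|\psi_T g\|_{H^{b'}}\lesssim\|g\|_{H^{b'}}$ is not uniform in $T$, since the Lipschitz norm of $\psi_T$ is of order $T^{-1}$ for small $T$, which obstructs any direct interpolation between $L^2$ and $H^b$. The Littlewood-Paley splitting at scale $1/T$ is tailored precisely to confine this bad behavior to the low-frequency sector, where Bernstein's inequality converts the $T^{-1}$-type loss into exactly the factor $T^{-b'}$ needed to combine with the $T^b$-gain of the $L^2$-endpoint and reproduce the required $T^{b-b'}$.
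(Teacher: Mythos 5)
The paper does not actually prove this lemma: it only cites Ginibre--Tsutsumi--Velo \cite{GTV}, so there is no internal proof to compare against. Your reconstruction follows the same general architecture as the classical argument: the reduction $\|w\|_{X^{s,b}}=\|e^{-it\partial_x^2}w\|_{H^b_tH^s_x}$ to the scalar estimate $\|\psi_Tg\|_{H^{b'}(\R)}\le cT^{b-b'}\|g\|_{H^b(\R)}$ is correct, the $L^2$ endpoint via H\"older with exponents $(1/b,\,2/(1-2b))$, the scaling identity $\|\psi_T\|_{L^{1/b}}=T^b\|\psi\|_{L^{1/b}}$ and the Sobolev embedding $H^b(\R)\hookrightarrow L^{2/(1-2b)}(\R)$ is exactly the standard computation, and the duality step for the regime $-\tfrac12<b'<b\le 0$ is clean and matches the statement (note the lemma deliberately excludes the straddling case $b'<0<b$, so your two cases cover everything). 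You should, however, state explicitly the restriction $0<T\le 1$, since several of your bounds (e.g.\ $\langle 1/T\rangle^{b'}\lesssim T^{-b'}$) use it.

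The one step that does not work as written is the lift from the $L^2$ endpoint to $H^{b'}$ with $0<b'<b$, and your diagnosis of the ``main obstacle'' there is mistaken. The multiplier bound $\|\psi_Tg\|_{H^{b'}}\lesssim\|g\|_{H^{b'}}$ \emph{is} uniform in $0<T\le1$ for $0\le b'<\tfrac12$: by the fractional Leibniz rule the only $\psi$-dependent quantity is $\|D^{b'}\psi_T\|_{L^{1/b'}}$, which is exactly scale-invariant ($D^{b'}$ contributes $T^{-b'}$ and the $L^{1/b'}$ norm contributes $T^{b'}$); the Lipschitz norm of $\psi_T$ never enters for $b'<\tfrac12$. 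Once this is observed, the cleanest completion is simply to interpolate the fixed operator $M_T:g\mapsto\psi_Tg$ between $M_T:H^b\to L^2$ (norm $\lesssim T^b$, your endpoint) and $M_T:H^b\to H^b$ (norm $O(1)$), which gives $M_T:H^b\to H^{\theta b}$ with norm $\lesssim T^{(1-\theta)b}=T^{b-b'}$ for $\theta=b'/b$, with no frequency decomposition at all. By contrast, your Littlewood--Paley route has a genuine gap on the low-frequency piece: Bernstein's inequality does not apply to $\psi_Tg_{\mathrm{lo}}$, because multiplication by $\psi_T$ destroys the compact Fourier support (the spectrum of $\psi_Tg_{\mathrm{lo}}$ is only concentrated in $|\tau|\lesssim 1/T$ up to Schwartz tails). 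The conclusion of that step is true, but proving it requires splitting the \emph{output} frequencies at $2/T$ and controlling the tail via the rapid decay of $\widehat{\psi_T}$ together with $\|\widehat{g_{\mathrm{lo}}}\|_{L^1}\lesssim T^{b-1/2}\|g\|_{H^b}$ (Cauchy--Schwarz on $|\sigma|\le 1/T$); and your high-frequency piece tacitly uses the very uniform multiplier bound you claim is unavailable. So the plan is completable, but as written the key intermediate inequality is asserted rather than proved, and the stated reason for choosing this more complicated route over the direct interpolation is not valid.
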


The estimates in Lemma \ref{Xsb-estimates-GTV} was proved  by Ginibre, Tsutsumi and Velo in \cite{GTV}, where they esta\-blished well-posednes for the Zakharov system using the Fourier restriction method.

\begin{lemma}\label{faminski1}
Let $\kappa \ge 0$, $0< b<\frac12$ and $\frac12< \alpha <\frac23$. Then, for any  positive time  $T>0$ we have
\begin{equation*}
\|\psi_T(t)w\|_{Y^{\kappa,b,\alpha}}\lesssim T^{\frac12-\alpha-\frac{\kappa}{3}}\|w\|_{Y^{\kappa,b,\alpha}}
\end{equation*}
for any $w\in Y^{\kappa, b, \alpha}$.
\end{lemma}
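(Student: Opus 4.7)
The plan is to split the $Y^{\kappa,b,\alpha}$-norm along the two additive pieces of the Faminskii multiplier
$$\mu(\xi,\tau) = (1+|\tau-\xi^3|)^b + \chi_{[-1,1]}(\xi)(1+|\tau|)^\alpha,$$
writing $\|\psi_T w\|_{Y^{\kappa,b,\alpha}} \le I_1 + I_2$, where $I_1$ collects the contribution of the KdV-dispersive factor $(1+|\tau-\xi^3|)^b$ and $I_2$ that of the low-frequency weight $\chi_{[-1,1]}(\xi)(1+|\tau|)^\alpha$. Under the hypotheses $\alpha>\tfrac12$, $\kappa\ge 0$ the target exponent $\tfrac12-\alpha-\tfrac{\kappa}{3}$ is negative, so the estimate is a loss as $T\to 0^+$; this loss is produced entirely by $I_2$, while $I_1$ will be controlled by a more favourable (indeed non-negative) power of $T$ that is subsumed when $T\le 1$.

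For $I_1$, multiplication by $\psi_T(t)$ in time corresponds to convolution in $\tau$ with $\widehat{\psi_T}(\tau)=T\widehat{\psi}(T\tau)$. Following the Ginibre--Tsutsumi--Velo argument underlying Lemma \ref{Xsb-estimates-GTV}, translated to the KdV symbol $\tau-\xi^3$ in place of $\tau+\xi^2$, one obtains a bound by $T^{\gamma}\,\|w\|_{Y^{\kappa,b,\alpha}}$ for any $\gamma\in[0,\tfrac12-b)$. Since $\gamma\ge 0\ge \tfrac12-\alpha-\tfrac{\kappa}{3}$ for $T\le 1$ (the case $T>1$ being trivial), the piece $I_1$ is controlled, and the low-frequency structure plays no role.

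For $I_2$, the support restriction $|\xi|\le 1$ collapses the polynomial prefactor via $(1+|\xi|+|\tau|^{1/3})^\kappa\lesssim \langle\tau\rangle^{\kappa/3}$ and reduces the task to bounding, uniformly in $\xi\in[-1,1]$,
$$\big\|\langle\tau\rangle^{\alpha+\kappa/3}\,\mathscr{F}_t\bigl[\psi_T\,\mathscr{F}_\xi^{-1}\widehat{w}(\xi,\cdot)\bigr](\tau)\big\|_{L^2_\tau}.$$
Because $\alpha+\tfrac{\kappa}{3}>\tfrac12$, the time cutoff $\psi_T$ incurs the sharp loss of $T^{1/2-(\alpha+\kappa/3)}$ in $H^{\alpha+\kappa/3}_t$. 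To extract it, split the $\tau$-integration into $|\tau|\lesssim 1/T$ (estimated by Cauchy--Schwarz on the convolution $\widehat{\psi_T}\ast_\tau\widehat{w}(\xi,\cdot)$ together with $\|\widehat{\psi_T}\|_{L^1_\tau}\lesssim 1$) and $|\tau|\gg 1/T$ (where the rapid decay of $\widehat{\psi}$ dominates the polynomial weight $\langle\tau\rangle^{\alpha+\kappa/3}$); balancing the two regimes produces the factor $T^{1/2-\alpha-\kappa/3}$. The residual $L^2_\tau$-content of $\widehat{w}(\xi,\cdot)$ on $|\xi|\le 1$ is controlled by either component of $\mu$, and hence by the full $Y^{\kappa,b,\alpha}$-norm of $w$; integrating the uniform-in-$\xi$ bound over the interval $[-1,1]$ only costs a harmless constant.

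The main obstacle will be the $I_2$ step: the $\tau$-split has to be calibrated so that both regimes yield matching factors of $T^{1/2-\alpha-\kappa/3}$ without stray logarithms at the threshold $|\tau|\sim 1/T$, and the weight $\langle\tau\rangle^{\alpha+\kappa/3}$ must be carried through the convolution with $\widehat{\psi_T}$ uniformly in the compactly-supported variable $\xi$. Some care is also required in the resonant region $\{|\tau-\xi^3|\ll 1\}\cap\{|\xi|\le 1\}$, where one must rely on the $(1+|\tau|)^\alpha$-weighted part of the Faminskii norm (rather than on the dispersive weight) to bound $\|\widehat{w}(\xi,\cdot)\|_{L^2_\tau}$.
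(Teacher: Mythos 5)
The paper does not actually prove this lemma; it is quoted verbatim from Lemma 2.1 of Faminskii's paper \cite{Faminskii}, so any self-contained argument is necessarily a different route. Your decomposition into $I_1$ (the $(1+|\tau-\xi^3|)^b$ part of $\mu$) and $I_2$ (the $\chi_{[-1,1]}(\xi)(1+|\tau|)^{\alpha}$ part) is a sensible starting point, and your treatment of $I_2$ is essentially correct: on $|\xi|\le 1$ the full weight is comparable to $\langle\tau\rangle^{\alpha+\kappa/3}$, and time-localization in $H^s_t$ with $s=\alpha+\kappa/3>\tfrac12$ costs exactly $T^{\frac12-\alpha-\frac{\kappa}{3}}$ (your two-regime split in $\tau$ is a workable, if slightly awkward, version of the standard splitting $\langle\tau\rangle^s\lesssim\langle\tau'\rangle^s+|\tau-\tau'|^s$ followed by Young's inequality).

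The genuine gap is in $I_1$. Your claim that this piece is ``controlled by a non-negative power of $T$'' via the Ginibre--Tsutsumi--Velo argument is false in general, because the prefactor $\big(1+|\xi|+|\tau|^{\frac13}\big)^{\kappa}$ contributes an effective temporal weight $|\tau|^{\kappa/3}$ at \emph{all} spatial frequencies, not only on $|\xi|\le 1$ where you account for it. Concretely, take $\widehat{w}(\xi,\tau)=a(\xi)\rho(\tau-\xi^3)$ with $a$ supported on $|\xi|\sim 1$ away from $[-1,1]$ and $\rho$ a unit bump: the input $Y^{\kappa,b,\alpha}$-norm is $O(\|a\|_{L^2})$, while convolution with $\widehat{\psi_T}$ spreads the mass over $|\tau-\xi^3|\lesssim 1/T$, and the $I_1$-part of the output norm computes to $\sim T^{\frac12-b-\frac{\kappa}{3}}\|a\|_{L^2}$, which blows up as $T\to 0$ whenever $\kappa>3(\tfrac12-b)$. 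So $I_1$ does incur a loss; it happens to be admissible because $b<\tfrac12<\alpha$ gives $T^{\frac12-b-\frac{\kappa}{3}}\le T^{\frac12-\alpha-\frac{\kappa}{3}}$ for $T\le 1$, but your invocation of Lemma \ref{Xsb-estimates-GTV} cannot produce this bound: that lemma concerns weights of the form $\langle\xi\rangle^{s}\langle\tau+\xi^2\rangle^{b}$ with modulation exponent strictly below $\tfrac12$, whereas here the effective exponent $b+\tfrac{\kappa}{3}$ may exceed $\tfrac12$, and the two-sided comparison $\big(1+|\xi|+|\tau|^{\frac13}\big)^{\kappa}(1+|\tau-\xi^3|)^{b}\approx\langle\xi\rangle^{\kappa}\langle\tau-\xi^3\rangle^{b+\kappa/3}$ fails (the right side is much larger when $\tau\approx 0$ and $|\xi|$ is large). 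Handling this mixed weight correctly is precisely the content of Faminskii's Lemma 2.1, and your sketch does not supply it.
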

\begin{proof}
This estimate was proved by Faminskii in Lemma 2.1 of \cite{Faminskii}.  
\end{proof}

\subsection{Estimates for the free propagators} The linear group associated to the linear Schr\"odinger equation is given by te operators
$$e^{it\partial_x^2}: \mathscr{S}'(\R)\rightarrow  \mathscr{S}'(\R),$$
where
$\mathscr{F}_{x}\big[e^{it\partial_x^2}\phi\big](\xi)=e^{-it\xi^2}\widehat{\phi}(\xi)$,
so that
\begin{equation}\label{grupo-s}
\begin{cases}
(i\partial_t+\partial_x^2)e^{it\partial_x^2}\phi(x) =0,& (x,t)\in\mathbb{R}\times\mathbb{R},\\
e^{it\partial_x^2}\phi(x)\big|_{t=0}=\phi(x),& x\in\mathbb{R}.
\end{cases}
\end{equation}

The next result collect some important estimates for $e^{it\partial_x^2}$. 
 
\begin{lemma}\label{lemma-grupo-s}
	Let $s\in\mathbb{R}$ and  $0< b<1$. Then for all $\phi\in H^s(\mathbb{R})$ we have the following estimates:
	\begin{align}
		&\|e^{it\partial_x^2}\phi(x)\|_{\mathcal{C}\big(\mathbb{R}_t;\,H^s(\mathbb{R}_x)\big)}\leq c_s\|\phi\|_{H^s(\mathbb{R})},\label{lemma-grupo-s-a}\\
		&\|\psi(t) e^{it\partial_x^2}\phi(x)\|_{\mathcal{C}\big(\mathbb{R}_x;\,H^{(2s+1)/4}(\mathbb{R}_t)\big)}\leq c_{\psi, s} \|\phi\|_{H^s(\mathbb{R})},\label{lemma-grupo-s-b}\\
		&\|\psi(t)e^{it\partial_x^2}\phi(x)\|_{X^{s,b}}\leq c_{\psi, s}\|\psi\|_{H^1(\mathbb{R})} \|\phi\|_{H^s(\mathbb{R})}.\label{lemma-grupo-s-c}
	\end{align}
\end{lemma}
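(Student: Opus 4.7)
The plan is to establish the three estimates in increasing order of difficulty. Estimate \eqref{lemma-grupo-s-a} is immediate from Plancherel: since the symbol $e^{-it\xi^2}$ is unimodular, $\|e^{it\partial_x^2}\phi\|_{H^s(\R)}=\|\langle\xi\rangle^s\widehat{\phi}\|_{L^2_\xi}=\|\phi\|_{H^s(\R)}$ for every $t\in\R$. Continuity of $t\mapsto e^{it\partial_x^2}\phi$ in $H^s(\R)$ follows by dominated convergence applied to $\int_\R\langle\xi\rangle^{2s}|e^{-it\xi^2}-e^{-it_0\xi^2}|^2|\widehat{\phi}(\xi)|^2d\xi$, whose integrand is pointwise dominated by $4\langle\xi\rangle^{2s}|\widehat{\phi}(\xi)|^2\in L^1$ and tends to zero pointwise.

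For \eqref{lemma-grupo-s-c} I would compute the space-time Fourier transform explicitly. Using $\mathscr{F}_t[e^{-it\xi^2}](\tau)=2\pi\delta(\tau+\xi^2)$, a convolution in $\tau$ gives
\[
\mathscr{F}[\psi(t)e^{it\partial_x^2}\phi](\xi,\tau)=\widehat{\psi}(\tau+\xi^2)\widehat{\phi}(\xi).
\]
Substituting into the $X^{s,b}$-norm and changing variables $\eta=\tau+\xi^2$ separates the integrals,
\[
\|\psi(t)e^{it\partial_x^2}\phi\|_{X^{s,b}}^2=\Bigl(\int_\R\langle\xi\rangle^{2s}|\widehat{\phi}(\xi)|^2d\xi\Bigr)\Bigl(\int_\R\langle\eta\rangle^{2b}|\widehat{\psi}(\eta)|^2d\eta\Bigr)=\|\phi\|_{H^s(\R)}^2\|\psi\|_{H^b(\R)}^2.
\]
Since $0<b<1$ one has $\|\psi\|_{H^b(\R)}\le c_b\|\psi\|_{H^1(\R)}$ by interpolation, which delivers the claim.

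Estimate \eqref{lemma-grupo-s-b} is the main obstacle, being the classical Kato-type smoothing effect recorded in \eqref{se-1}. I would decompose $\widehat{\phi}=\widehat{\phi}\chi_{\{|\xi|\le 1\}}+\widehat{\phi}\chi_{\{|\xi|\ge 1\}}$. The low-frequency contribution is a smooth function of $(x,t)$ bounded by $\|\widehat{\phi}\chi_{[-1,1]}\|_{L^1}\lesssim\|\phi\|_{H^s(\R)}$; cut off by the compactly supported $\psi(t)$ it lies in every $H^m_t$ uniformly in $x$, and continuously in $x$. For the high-frequency piece
\[
\int_1^\infty e^{ix\xi}e^{-it\xi^2}\widehat{\phi}(\xi)d\xi
\]
I would change variables $\eta=\xi^2$ on $\{\xi\ge 1\}$ to rewrite it as $\int_1^\infty e^{-it\eta}\bigl[e^{ix\sqrt{\eta}}\widehat{\phi}(\sqrt{\eta})/(2\sqrt{\eta})\bigr]d\eta$, i.e.\ an inverse temporal Fourier transform of a function supported in $\{|\tau|\ge 1\}$. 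Computing its $H^{(2s+1)/4}_t$-norm by Plancherel and undoing the change of variables produces the weight $\langle\xi^2\rangle^{(2s+1)/2}(2\xi)^{-1}\sim\xi^{2s}$ for $\xi\ge 1$, yielding a bound by $\|\phi\|_{H^s(\R)}$ uniformly in $x$. The symmetric piece $\xi\le -1$ is identical; multiplication by $\psi(t)$ preserves the estimate, and continuity of $x\mapsto \psi(t)e^{it\partial_x^2}\phi$ in $H^{(2s+1)/4}(\R_t)$ follows by a standard density argument approximating $\phi$ by Schwartz functions.
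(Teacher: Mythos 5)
Your proposal is correct: the Plancherel argument for \eqref{lemma-grupo-s-a}, the explicit factorization $\mathscr{F}[\psi(t)e^{it\partial_x^2}\phi](\xi,\tau)=\widehat{\psi}(\tau+\xi^2)\widehat{\phi}(\xi)$ for \eqref{lemma-grupo-s-c}, and the low/high frequency splitting with the substitution $\eta=\xi^2$ for the Kato-type bound \eqref{lemma-grupo-s-b} are exactly the standard arguments. The paper itself gives no proof and simply cites Holmer \cite{Holmer}, where the lemma is established along essentially the same lines, so your write-up supplies the omitted details rather than a genuinely different route.
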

The proof of Lemma \ref{lemma-grupo-s}  can be seen in \cite{Holmer}, work developed by Holmer when he studied the Schr\"odinger equation on the half-line

\medskip
Similarly, the linear unitary  group associated to the linear KdV equation is defined by
$$e^{-t\partial_x^3}:\mathscr{S}'(\R)\rightarrow \mathscr{S}'(\R),$$
where
$\mathscr{F}_{x}\big[e^{-t\partial_x^3}\phi\big](\xi)=e^{it\xi^3}\widehat{\phi}(\xi)$,
so that
\begin{equation}\label{grupo-kdv}
\begin{cases}
(\partial_t+\partial_x^3)e^{-t\partial_x^3}\phi(x,t)=0& \text{for}\quad (x,t)\in \mathbb{R}\times\mathbb{R},\\
e^{-t\partial_x^3}\phi(x)\big|_{t=0}=\phi(x)&\text{for}\quad  x\in\mathbb{R}.
\end{cases} 
\end{equation}

The next estimates were used by Famiskii in \cite{Faminskii}, where he studied the Korteweg-de Vries equation in half-strip with data in fractional-order Sobolev spaces.
\begin{lemma}\label{lemma-grupo-kdv}
Let $\kappa\ge 0$, $0< b<1$ and $\frac12< \alpha <\frac23$. Then for all  $\phi\in H^{\kappa}(\R)$ we have the following estimates:
\begin{align*}
	&\|e^{-t\partial_x^3}\phi(x)\|_{\mathcal{C}\big(\mathbb{R}_t;\,H^{\kappa}(\mathbb{R}_x)\big)}\leq c_{\kappa}\|\phi\|_{H^{\kappa}(\mathbb{R})},\\
	&\|\psi(t) e^{-t\partial_x^3}\phi(x)\|_{\mathcal{C}\big(\mathbb{R}_x;\,H^{(\kappa +1)/3}(\mathbb{R}_t)\big)}
	\leq c_{\psi, \kappa} \|\phi\|_{H^{\kappa}(\mathbb{R})},\\
	&\|\psi(t)e^{-t\partial_x^3}\phi(x)\|_{Y^{\kappa,b,\alpha}}\leq c_{\psi, \kappa} \|\phi\|_{H^{\kappa}(\mathbb{R})}.
\end{align*}
\end{lemma}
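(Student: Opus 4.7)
The lemma contains three distinct estimates for the Airy propagator $e^{-t\partial_x^3}$, and the natural plan is to handle each by reducing it to Plancherel's theorem applied to the explicit Fourier symbol $e^{it\xi^3}$ of the group. Throughout, the key computation is that
\begin{equation*}
\mathscr{F}_{x,t}\big[\psi(t)e^{-t\partial_x^3}\phi\big](\xi,\tau)=\widehat{\phi}(\xi)\,\widehat{\psi}(\tau-\xi^3),
\end{equation*}
obtained by Fubini from $\mathscr{F}_x[e^{-t\partial_x^3}\phi](\xi)=e^{it\xi^3}\widehat{\phi}(\xi)$; every bound then reduces to an elementary weighted $L^2$ calculation exploiting the Schwartz decay of $\widehat{\psi}$.

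The first estimate is immediate: since $|e^{it\xi^3}|=1$ one has $\|e^{-t\partial_x^3}\phi\|_{H^\kappa_x}=\|\phi\|_{H^\kappa}$ for each $t$, and continuity in $t$ follows by density of $\mathscr{S}(\mathbb{R})$ together with the uniform bound and dominated convergence.

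For the trace estimate (the sharp Kato smoothing bound), I would decompose $\phi=\phi_{\text{low}}+\phi_{\text{high}}$ with $\widehat{\phi_{\text{low}}}$ supported in $|\xi|\le 1$ and $\widehat{\phi_{\text{high}}}$ in $|\xi|\ge 1$. For the high part, the change of variables $\eta=\xi^3$ (with $\xi(\eta)=\operatorname{sgn}(\eta)|\eta|^{1/3}$, $d\xi=\tfrac{1}{3}|\eta|^{-2/3}d\eta$) rewrites
\begin{equation*}
u_{\text{high}}(x,t)=\frac{1}{6\pi}\int_{|\eta|\ge 1} e^{ix\xi(\eta)}\widehat{\phi}(\xi(\eta))|\eta|^{-2/3}e^{it\eta}\,d\eta,
\end{equation*}
so the temporal Fourier transform of $\psi(t)u_{\text{high}}(x,t)$ is the convolution in $\tau$ of the rapidly decaying $\widehat{\psi}$ with $e^{ix\xi(\eta)}\widehat{\phi}(\xi(\eta))|\eta|^{-2/3}\chi_{\{|\eta|\ge 1\}}$. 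Using the inequality $\langle\tau\rangle\lesssim\langle\tau-\eta\rangle+\langle\eta\rangle$ and Young's inequality (or Schwartz decay of $\widehat{\psi}$), the $H^{(\kappa+1)/3}_\tau$ norm is bounded by $\|\langle\eta\rangle^{(\kappa+1)/3}|\eta|^{-2/3}\widehat{\phi}(\xi(\eta))\|_{L^2_\eta}$; undoing the substitution (now with Jacobian $3\xi^2\,d\xi$) yields $\|\langle\xi\rangle^\kappa\widehat{\phi}\|_{L^2}=\|\phi\|_{H^\kappa}$, uniformly in $x$. The low-frequency piece $u_{\text{low}}$ is smooth in $(x,t)$ because $\widehat{\phi}\chi_{[-1,1]}\in L^1$, so $|\mathscr{F}_t[\psi u_{\text{low}}](x,\tau)|\lesssim \int_{-1}^1|\widehat{\psi}(\tau-\xi^3)||\widehat{\phi}(\xi)|\,d\xi$ and the required weighted $L^2_\tau$ bound follows from $\widehat{\psi}\in\mathscr{S}$. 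Continuity in $x$ is obtained by the same estimates applied to $u(x+h,\cdot)-u(x,\cdot)$.

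For the Bourgain-type bound, I would plug the explicit formula for $\mathscr{F}_{x,t}[\psi e^{-t\partial_x^3}\phi]$ into the $Y^{\kappa,b,\alpha}$ norm and split $\mu^2\lesssim \mu_1^2+\mu_2^2$ according to the two summands in the definition of $\mu$. For $\mu_1=(1+|\tau-\xi^3|)^b$, change variables $\sigma=\tau-\xi^3$; since $|\tau|^{1/3}\lesssim|\sigma|^{1/3}+|\xi|$ one has
\begin{equation*}
(1+|\xi|+|\tau|^{1/3})^{2\kappa}\lesssim \langle\xi\rangle^{2\kappa}+\langle\sigma\rangle^{2\kappa/3},
\end{equation*}
and the double integral separates into $\|\phi\|_{H^\kappa}^2$ times $\|\langle\sigma\rangle^{\kappa/3+b}\widehat{\psi}\|_{L^2}^2<\infty$. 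For $\mu_2=\chi_{[-1,1]}(\xi)(1+|\tau|)^\alpha$, the restriction $|\xi|\le 1$ gives $(1+|\xi|+|\tau|^{1/3})^{2\kappa}\lesssim\langle\tau\rangle^{2\kappa/3}$ and $\langle\tau\rangle\lesssim\langle\tau-\xi^3\rangle$, so the same convolution trick bounds this contribution by $\|\phi\|_{L^2}^2\|\langle\sigma\rangle^{\kappa/3+\alpha}\widehat{\psi}\|_{L^2}^2$, which is again finite.

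The main technical obstacle is the trace estimate: the cube-root substitution $\eta=\xi^3$ is singular at the origin, so the low-frequency range must be treated separately, and the weight $\langle\tau\rangle^{(\kappa+1)/3}$ has to be transferred across the convolution with $\widehat{\psi}$ without losing the sharp balance $|\xi|^{2\kappa}\cdot|\xi|^{-2}\cdot|\xi|^2=|\xi|^{2\kappa}$ that makes the final substitution close on $\|\phi\|_{H^\kappa}$; the remaining estimates are essentially bookkeeping once this identity is in place.
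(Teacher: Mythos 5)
Your argument is correct, but note that the paper itself offers no proof of this lemma: it simply records the estimates and attributes them to Faminskii \cite{Faminskii} (they are his linear estimates for the Airy group in the $Y^{\kappa,b,\alpha}$ framework), just as the companion Lemma for $e^{it\partial_x^2}$ is delegated to Holmer. What you have written is a self-contained reconstruction of the standard proofs, and the individual steps check out: the first bound is unitarity of the multiplier $e^{it\xi^3}$ plus density for continuity in $t$; the trace bound is the Kenig--Ponce--Vega local smoothing argument, where your low/high frequency split correctly isolates the singular Jacobian $|\eta|^{-2/3}$ of the substitution $\eta=\xi^3$, and the exponent bookkeeping $\langle\xi^3\rangle^{2(\kappa+1)/3}\,|\xi|^{-4}\cdot 3\xi^2\sim\langle\xi\rangle^{2\kappa}$ on $|\xi|\ge 1$ closes the estimate on $\|\phi\|_{H^{\kappa}}$; and the $Y^{\kappa,b,\alpha}$ bound follows from the identity $\mathscr{F}_{x,t}[\psi e^{-t\partial_x^3}\phi]=\widehat{\phi}(\xi)\widehat{\psi}(\tau-\xi^3)$ together with $(1+|\xi|+|\tau|^{1/3})^{\kappa}\lesssim\langle\xi\rangle^{\kappa}\langle\tau-\xi^3\rangle^{\kappa/3}$ and, on the support of $\chi_{[-1,1]}(\xi)$, $\langle\tau\rangle\lesssim\langle\tau-\xi^3\rangle$, after which the Schwartz decay of $\widehat{\psi}$ absorbs every power of $\langle\tau-\xi^3\rangle$. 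One cosmetic remark: in the $\mu_2$ piece there is no convolution, only a pointwise domination followed by the change of variables $\sigma=\tau-\xi^3$ in the inner integral; the conclusion is unaffected. The trade-off is clear: the paper keeps the exposition short by outsourcing these linear estimates, while your version makes the section self-contained at the cost of a page of Plancherel computations.
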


\section{\textbf{Well-posedness including energy regularity}}\label{3}
The problem \eqref{SK} was studied for low regularity in \cite{CM} by using the ideas contained in \cite{C}, \cite{CK} and \cite{Holmerkdv}, based in a inversion of a 
Riemann-Liouville fractional integral. Here we are interesting in the dynamics of the solutions in the energy space $H^1(\R^{\pm})\times H^1(\R^{\pm})$, so 
we need to justify a local theory for the IBVP \eqref{SK} in high regularity assumptions. The estimates on Bourgain spaces of the boundary operator given in \cite{CM} 
do not reach high regularity, so we need to use another method. In this direction, the crucial first step is to obtain an appropriate operators that solve the following 
linear versions of the IBVPs:
\begin{equation}\label{linearnls}
\begin{cases}
iu_t+u_{xx}=0,& x,t\in \R^{+},\\
u(x,0)=0,& x \in \R^+,\\
u(0,t)=f(t)\in H^{(2s+1)/4}(\R^+)&
\end{cases}
\end{equation} 
and
\begin{equation}\label{linearkdv}
\begin{cases}
v_t+v_{xxx}=0,& x,t\in \R^{+},\\
v(x,0)=0& x \in \R^+,\\
v(0,t)=g(t)\in H^{(\kappa+1)/3}(\R^+),&
\end{cases}
\end{equation} 
having a nice behavior in Bourgain spaces wit high regularity.

\subsection{Linear operators}
To solve the linear IBVP \eqref{linearnls} we consider the boundary operator given in \cite{Bona1} and \cite{tzirakis}, based on the Laplace transform. More precisely, we use here the operator
\begin{equation}\label{L-operator}
\mathcal{L}f(x,t)=\mathcal{L}_{-1}f(x,t)+\mathcal{L}_1f(x,t),
\end{equation}
where
\begin{align*}
&\mathcal{L}_{\lambda}q(t)=\frac{1}{\pi}\int_0^{+\infty}e^{\lambda(iz^2t-iz x)} z\, \widehat{q}(\lambda z^2)dz,\quad \lambda=\pm 1,\\
\intertext{and}
&\widehat{q}(\tau)=\mathcal{F}(\chi_{(0,+\infty)}f)(\tau)=\int_{0}^{+\infty}e^{-i\tau t}f(t)dt,
\end{align*}
with $f\in \mathcal{C}_0^{\infty}(\R^+)$. We refer the reader to \cite{Bona1} to see the derivation of operator $\mathcal{L}$.

\medskip 
On the other hand, to solve the second linear IBVP \eqref{linearkdv} we use the approach of Faminskii (see \cite{Faminskii}), based in a  ``boundary potential'' type for solution.
So,  we consider the operator 
\begin{equation*}
\mathcal{V}g(x,t)=\int_0^t\frac{3}{(t-t')}A''\bigg(\frac{x}{(t-t')^{1/3}}\bigg)g(t')dt',
\end{equation*}
for any function $g\in \mathcal{C}_0^{\infty}(\R^+)$, where $A(x)=\displaystyle \frac{1}{2\pi}\int_{\R} e^{i(\xi^3+\xi) x}d\xi$ is the Airy function.

\medskip 
The next Lemmas summarize the principal estimates for the linear Boundary operators $\mathcal{L}$ and $\mathcal{V}.$ 
\begin{lemma}
Let $s\geq 0$ and  $b<\tfrac12$. The following estimates for $\mathcal{L}$ are ensured:
\begin{align*}
&\|\mathcal{L}f(x,t)\|_{\mathcal{C}\big(\mathbb{R}_t^+;\,H^s(\R_x)\big)}\leq c_s\|f\|_{H_0^{(2s+1)/4}(\mathbb{R}^+)},\\
&\|\psi(t)\mathcal{L}f(x,t)\|_{\mathcal{C}\big(\mathbb{R}_x;\,H_0^{(2s+1)/4}(\mathbb{R}_t^+)\big)}\leq c_{\psi, s} \|f\|_{H_0^{(2s+1)/4}(\mathbb{R}^+)},\\
&\|\psi(t)\mathcal{L}f(x,t)\|_{X^{s,b}}\leq c_{\psi, s} \|f\|_{H_0^{(2s+1)/4}(\R^+)}.
\end{align*}
\end{lemma}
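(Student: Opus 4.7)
The plan is to reduce each of the three claims to a weighted $L^2$ bound on $\widehat{q}$ with weight $\langle\tau\rangle^{(2s+1)/2}$, which matches $\|q\|^2_{H^{(2s+1)/4}(\mathbb{R})}\sim \|f\|^2_{H_0^{(2s+1)/4}(\mathbb{R}^+)}$. First I would perform the change of variable $\tau=\lambda z^2$ inside the defining integral to rewrite
$$\mathcal{L}f(x,t)=\frac{1}{2\pi}\int_{\mathbb{R}}e^{i\tau t}K(\tau,x)\widehat{q}(\tau)\,d\tau,\qquad K(\tau,x)= \chi_{\{\tau>0\}}\,e^{-i\sqrt{\tau}\,x}+\chi_{\{\tau<0\}}\,e^{i\sqrt{|\tau|}\,x},$$
and note that $|K(\tau,x)|\equiv 1$. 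This unimodular kernel is the structural fact that drives all three estimates.

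For the first estimate, at fixed $t$, taking the spatial Fourier transform of each piece gives $\mathscr{F}_x[\mathcal{L}_{1}f(\cdot,t)](\xi)=2\,e^{i\xi^2 t}|\xi|\widehat{q}(\xi^2)\chi_{\{\xi<0\}}$ and $\mathscr{F}_x[\mathcal{L}_{-1}f(\cdot,t)](\xi)=2\,e^{-i\xi^2 t}\xi\widehat{q}(-\xi^2)\chi_{\{\xi>0\}}$. Plancherel, combined with the change of variable $\tau=\pm\xi^2$ on each half-line, reduces $\|\mathcal{L}f(\cdot,t)\|^2_{H^s(\mathbb{R})}$ to $\int_{\mathbb{R}}\langle\sqrt{|\tau|}\rangle^{2s}\sqrt{|\tau|}\,|\widehat{q}(\tau)|^2\,d\tau$, which is dominated by $\|q\|^2_{H^{(2s+1)/4}(\mathbb{R})}$. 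Uniformity and continuity in $t$ are immediate from the unimodular phases $e^{\pm i\xi^2 t}$ and dominated convergence.

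For the second estimate, with $x$ fixed the time Fourier transform reads $\mathscr{F}_t[\mathcal{L}f(x,\cdot)](\tau)=K(\tau,x)\widehat{q}(\tau)$, so by Plancherel $\|\mathcal{L}f(x,\cdot)\|_{H^{(2s+1)/4}(\mathbb{R})}=\|q\|_{H^{(2s+1)/4}(\mathbb{R})}$ uniformly in $x$. Cutting by $\psi$ only costs a multiplicative constant by Lemmas \ref{sobolev-estimates} and \ref{sobolev-estimates-0}, and the $H_0$-support condition (relevant when $(2s+1)/4>1/2$, i.e.\ $s>1/2$) is handled by verifying $\mathcal{L}f(x,0)=0$ for $x>0$. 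This vanishing follows from a contour-shift argument: the $z$-contour in each $\mathcal{L}_{\pm 1}$ can be deformed into the quadrant where $\widehat{q}(\lambda z^2)$ is holomorphic (thanks to the half-line support of $f$) and where $e^{-i\lambda zx}$ decays for $x>0$. Continuity in $x$ is again immediate from dominated convergence.

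The Bourgain-space estimate is where I expect the main work. I would split $\mathcal{L}=\mathcal{L}_{-1}+\mathcal{L}_1$, noting that the spacetime Fourier supports differ sharply: $\widehat{\mathcal{L}_{-1}f}$ lives on the Schr\"odinger dispersion curve $\tau+\xi^2=0$ (restricted to $\xi>0$), while $\widehat{\mathcal{L}_1 f}$ lives on the \emph{off-dispersion} curve $\tau=\xi^2$ (with $\xi<0$). The first piece is essentially free: identify $\mathcal{L}_{-1}f(x,t)=e^{it\partial_x^2}\phi(x)$ with $\phi:=\mathcal{L}_{-1}f(\cdot,0)$ and apply Lemma \ref{lemma-grupo-s} (estimate \eqref{lemma-grupo-s-c}) together with the first estimate to get $\|\psi\mathcal{L}_{-1}f\|_{X^{s,b}}\lesssim \|\phi\|_{H^s(\mathbb{R})}\lesssim \|f\|_{H_0^{(2s+1)/4}(\mathbb{R}^+)}$. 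The hard part is $\mathcal{L}_1 f$: after the substitution $\mu=\tau-\xi^2$ one is led to
$$\|\psi\mathcal{L}_1 f\|^2_{X^{s,b}}\lesssim \int_{\xi<0}\langle\xi\rangle^{2s}|\xi|^2|\widehat{q}(\xi^2)|^2\left(\int_{\mathbb{R}}\langle\mu+2\xi^2\rangle^{2b}|\widehat{\psi}(\mu)|^2\,d\mu\right)d\xi,$$
and the key calculation is bounding the inner integral sharply, using the Schwartz decay of $\widehat{\psi}$ together with the hypothesis $b<1/2$ so that, after the change $\tau=\xi^2$ on the outer integral, the whole expression closes into $\|q\|^2_{H^{(2s+1)/4}(\mathbb{R})}$. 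Keeping track of the trade-off between the $\langle\xi\rangle^{2b}$ loss coming from the off-dispersion support of $\mathcal{L}_1 f$ and the $\sqrt{|\tau|}$ gain coming from the Jacobian of $\tau=\xi^2$ is the delicate accounting that the proof hinges on.
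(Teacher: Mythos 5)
The paper does not actually prove this lemma: it cites Lemmas 3.1--3.3 of Erdogan--Tzirakis \cite{tzirakis}, so your proposal must be judged as a reconstruction of that argument. Your treatment of $\mathcal{L}_{-1}$ is exactly right and is how the cited proof handles that piece: identify $\mathcal{L}_{-1}f=e^{it\partial_x^2}\phi$ with $\widehat{\phi}(\xi)=2\xi\,\widehat{q}(-\xi^2)\chi_{\{\xi>0\}}$, check $\|\phi\|_{H^s}\lesssim\|f\|_{H_0^{(2s+1)/4}(\R^+)}$ by the substitution $\tau=-\xi^2$, and invoke \eqref{lemma-grupo-s-c}.

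The gap is in $\mathcal{L}_1$. You have taken the paper's formula $e^{\lambda(iz^2t-izx)}$ literally for $\lambda=+1$; but $e^{iz^2t-izx}$ does not solve $iu_t+u_{xx}=0$ (it returns $-2z^2$ times itself), and the correct $\lambda=+1$ piece in \cite{tzirakis} and \cite{Bona1} carries the \emph{real} exponential $e^{iz^2t-zx}$, multiplied by a cutoff $\rho(zx)$ so that it is defined for $x<0$; the paper's display is a typo. With your oscillatory reading, $\widehat{\psi\mathcal{L}_1f}$ is supported where $\tau-\xi^2=O(1)$ and $\xi<0$, so $\langle\tau+\xi^2\rangle^{2b}\sim\langle\xi\rangle^{4b}$ there. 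Your inner integral $\int\langle\mu+2\xi^2\rangle^{2b}|\widehat{\psi}(\mu)|^2d\mu$ is therefore bounded \emph{below} by $c\langle\xi\rangle^{4b}$, and after $\tau=\xi^2$ the whole expression is comparable to $\int\langle\tau\rangle^{s+2b+1/2}|\widehat{q}(\tau)|^2d\tau$, i.e.\ $\|q\|^2_{H^{(2s+1)/4+b}}$: the Jacobian gain $\sqrt{|\tau|}$ is already spent producing the $+\tfrac14$ in the exponent, so the ``delicate accounting'' you defer to cannot close, and for $0<b<\tfrac12$ the third estimate is simply false for the operator as you (and the paper's display) wrote it. The actual proof uses that $\mathscr{F}_x\bigl[e^{-z\cdot}\rho(z\cdot)\bigr](\xi)=z^{-1}\hat g(\xi/z)$ with $\hat g$ rapidly decaying, which spreads the $x$-frequency support and absorbs the $\langle\tau+\xi^2\rangle^{2b}$ weight. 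The same issue quietly touches your first two estimates (the true kernel has $|K(\tau,x)|=e^{-\sqrt{\tau}\,x}\not\equiv1$, and without the cutoff $\rho$ the $\lambda=+1$ piece is not even in $L^2(\R_x)$), though there the repairs are routine; your contour-shift argument for the support property in $t$ is the right idea.
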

\begin{proof}
See lemmas 3.1, 3.2 and 3.3  in  \cite{tzirakis}.
\end{proof}

\begin{lemma}\label{cof}
Let $\kappa \geq 0$, $0< b < \frac12$ and $\frac12 < \alpha < \frac23$.  The following estimates for $\mathcal{V}$ are ensured:
\begin{align*}
&\|\mathcal{V}g(x,t)\|_{\mathcal{C}\big(\R_t;\,H^{\kappa}(\R_x)\big)}\leq c_{\kappa} \|g\|_{H_0^{(\kappa+1)/3}(\R^+)},\\
&\|\psi(t)\mathcal{V}g(x,t)\|_{\mathcal{C}\big(\R_x;\,H_0^{(\kappa+1)/3}(\R_t^+)\big)}\leq c_{\psi, \kappa} \|g\|_{H_0^{(\kappa+1)/3}(\mathbb{R}^+)},\\
&\|\psi(t)\mathcal{V}g(x,t)\|_{Y^{\kappa,b,\alpha}}\leq c_{\psi, \kappa} \|g\|_{H_0^{(\kappa+1)/3}(\R^+)}.
\end{align*}
\end{lemma}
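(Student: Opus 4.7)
The plan is to follow closely the approach of Faminskii in \cite{Faminskii}, adapted to our notation, and to treat each of the three estimates as a consequence of a single Fourier-representation of the operator $\mathcal{V}$. First I would extend $g\in H_0^{(\kappa+1)/3}(\R^+)$ to a function $\tilde g\in H^{(\kappa+1)/3}(\R)$ supported in $[0,\infty)$ with equivalent norm (using $H_0^s$-extension, valid here since $(\kappa+1)/3>1/3$ lies in the allowed range). Then I would rewrite the kernel $\frac{3}{t-t'}A''\!\pL \frac{x}{(t-t')^{1/3}}\pR$ by substituting the oscillatory-integral definition of $A$ and performing the change of variables $\xi=\eta/(t-t')^{1/3}$; after taking Fourier transform in time, this leads to a representation of the form
\begin{equation*}
\mathcal{V}g(x,t)=\frac{1}{\pi}\mathrm{Re}\int_{\R}e^{i\tau t}\,\widehat{\tilde g}(\tau)\,\mathcal{K}(\tau,x)\,d\tau,
\end{equation*}
where $\mathcal{K}(\tau,x)$ is built from the cubic roots of $\xi^3=\tau$, choosing the branches that produce decay (rather than growth) as $x\to+\infty$. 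This is the standard ``boundary potential'' representation of Faminskii and it reduces all three estimates to weighted $L^2$ bounds on the kernel $\mathcal{K}$.

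For the first estimate I would apply Plancherel in $x$ and perform the change of variables $\tau=\xi^3$, with Jacobian $3\xi^2\,d\xi$. The crucial algebraic identity is $\la\tau\ra^{(\kappa+1)/3}\sim \la\xi\ra^{\kappa+1}$, so after passing one factor of $|\xi|$ into the measure $|\xi|^{1/2}d\xi$ coming from the Jacobian, the $H_0^{(\kappa+1)/3}$-norm of $\tilde g$ in $\tau$ converts exactly to the $H^{\kappa}$-norm in $\xi$. Continuity of $t\mapsto \mathcal{V}g(\cdot,t)$ in $H^{\kappa}(\R_x)$ then follows from dominated convergence. For the second estimate, one simply notices that at fixed $x\ge 0$ the representation above expresses $\mathcal{V}g(x,\cdot)$ as a Fourier multiplier applied to $\tilde g$, with symbol $\mathcal{K}(\tau,x)$ bounded uniformly in $x$ (and equal to $1$ at $x=0$, matching the boundary condition); continuity in $x$ follows again from dominated convergence, and multiplying by $\psi(t)$ together with Lemma \ref{sobolev-estimates-0} keeps us inside $H_0^{(\kappa+1)/3}(\R_t^+)$.

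The main obstacle is the third estimate, the Bourgain-type bound in $Y^{\kappa,b,\alpha}$. Here I would compute the space-time Fourier transform of $\psi(t)\mathcal{V}g(x,t)$ using the above representation, then split the weight $\mu(\xi,\tau)=(1+|\tau-\xi^3|)^b+\chi_{[-1,1]}(\xi)(1+|\tau|)^{\alpha}$ into its two pieces. On the first piece one exploits that the time cutoff $\psi$ produces a kernel that is essentially $\widehat\psi(\tau-\xi^3)$ convolved in $\tau$ with $\hat{\tilde g}(\tau)$, and the assumption $b<1/2$ makes the resonant region $\tau\approx\xi^3$ integrable; the cubic change of variables $\tau=\xi^3$ then closes the estimate in the same way as in the first step. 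On the second piece, which is supported in the low-frequency strip $|\xi|\le 1$, the contribution of the complex (non-oscillatory) branches of $\mathcal{K}(\tau,x)$ has to be estimated carefully; this is where the upper bound $\alpha<2/3$ is needed so that the weight $(1+|\tau|)^{\alpha}$ does not saturate the Sobolev factor $(1+|\tau|^{1/3})^{\kappa}$, while the lower bound $\alpha>1/2$ secures integrability at high $\tau$. Aggregating the two pieces produces the claimed bound on $\|\psi\mathcal{V}g\|_{Y^{\kappa,b,\alpha}}$; the non-trivial numerology of the range $1/2<\alpha<2/3$ is exactly what makes Faminskii's space the right functional setting here, and imitating his decomposition in Lemma 2.2 of \cite{Faminskii} is the most delicate step of the whole proof.
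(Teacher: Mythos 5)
Your sketch is correct and follows exactly the route the paper takes: the paper proves this lemma by direct citation of Lemmas 3.5, 3.6 and 3.9 of Faminskii \cite{Faminskii}, and your outline (boundary-potential representation via the decaying cube roots of $\xi^3=\tau$, Plancherel plus the cubic change of variables for the $H^\kappa_x$ and trace bounds, and the two-piece splitting of the weight $\mu(\xi,\tau)$ with $b<\tfrac12$ handling the resonant set and $\tfrac12<\alpha<\tfrac23$ handling the strip $|\xi|\le 1$) is a faithful summary of those cited arguments. No discrepancy to report.
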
 
\begin{proof}
See lemmas 3.5, 3.6 and 3.9 in \cite{Faminskii}.
\end{proof}

\subsection{Non-homogeneous operators} The  Duhamel inhomogeneous solution operator $\mathcal{S}$ associated with Schr\"odinger equation is define by 
\begin{equation*}
\mathcal{S}w(x,t)=-i\int_0^te^{i(t-t')\partial_x^2}w(x,t')dt',
\end{equation*}
so that
\begin{equation}\label{nonlinears}
\begin{cases}
(i\partial_t+\partial_x^2)\mathcal{S}w(x,t) =w(x,t)&\text{for}\quad  (x,t)\in\mathbb{R}\times\mathbb{R},\\
\mathcal{S}w(x,t)=0& \text{for}\quad x\in\mathbb{R}
\end{cases}
\end{equation}
and the corresponding inhomogeneous solution operator $\mathcal{K}$ associated to the KdV equation is given by
\begin{equation*}
\mathcal{K}w(x,t)=\int_0^te^{-(t-t')\partial_x^3}w(x,t')dt',
\end{equation*}
thus we have 
\begin{equation}\label{DK}
\begin{cases}
(\partial_t+\partial_x^3)\mathcal{K}w(x,t) =w(x,t)&\text{for}\quad  (x,t)\in\mathbb{R}\times\mathbb{R},\\
\mathcal{K}w(x,t) =0 & \text{for}\quad x\in\mathbb{R}.
\end{cases}
\end{equation}

\medskip 
The following result is due to Erdogan and Tzirakis and its proof can be seen in \cite{tzirakis}.
\begin{lemma}\label{duhamel-s}
Let $-\frac{1}{2}<d<0$. The following estimates are ensured for the operator $\mathcal{S}$:
\begin{align*}
&\|\psi(t)\mathcal{S}w(x,t)\|_{\mathcal{C}\big(\mathbb{R}_t;\,H^s(\mathbb{R}_x^+)\big)}\lesssim \|w\|_{X^{s,d}}\;\text{with}\;\; s\in \R,\\
&\|\psi(t)\mathcal{S}w(x,t)\|_{\mathcal{C}\big(\mathbb{R}_x;H^{(2s+1)/4}(\mathbb{R}_t)\big)}\lesssim \|w\|_{X^{s,d}}\; \text{with}\;\; -\tfrac{1}{2}< s\leq\tfrac{1}{2},\\
&\|\psi(t)\mathcal{S}w(x,t)\|_{\mathcal{C}\big(\mathbb{R}_x;H^{(2s+1)/4}(\mathbb{R}_t)\big)}\lesssim \|w\|_{X^{\frac{1}{2},\, (2s-1+4d)/4}}\!+\!\|w\|_{X^{s,d}}\;
\text{with}\;\;\tfrac{1}{2}\leq s<\tfrac{5}{2},\\
&\|\psi(t)\mathcal{S}w(x,t)\|_{X^{s,b}}\leq c \|w\|_{X^{s,d}}\; \text{with}\;\; -\tfrac{1}{2}<d\leq0\leq b\leq d+1.
\end{align*}
\end{lemma}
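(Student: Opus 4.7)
The plan is to start from an explicit Fourier representation of $\mathcal{S}w$ and split the resonant multiplier. Taking the space-time Fourier transform of $w$, the Duhamel integral yields
\begin{equation*}
\mathcal{S}w(x,t) = \int\!\!\int e^{i\xi x}\,\widetilde{w}(\xi,\tau)\,\frac{e^{it\tau} - e^{-it\xi^2}}{\tau+\xi^2}\,d\tau\,d\xi,
\end{equation*}
so the denominator $(\tau+\xi^2)^{-1}$ is the only obstacle. Fix a smooth cutoff $\phi$ with $\phi\equiv 1$ on $[-1,1]$ and supported in $[-2,2]$, and split the multiplier into its near-characteristic piece (carrying $\phi(\tau+\xi^2)$) and its off-characteristic piece (carrying $1-\phi(\tau+\xi^2)$). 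On the off-characteristic region the denominator is bounded and the resulting operator is harmless in every estimate below.

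For the near-characteristic piece I would Taylor expand
\begin{equation*}
\frac{e^{it(\tau+\xi^2)} - 1}{\tau+\xi^2} = \sum_{n\geq 1} \frac{(it)^{n}(\tau+\xi^2)^{n-1}}{n!},
\end{equation*}
so that each summand is $t^n\,e^{-it\xi^2}$ times a bounded multiplier supported where $|\tau+\xi^2|\lesssim 1$. Multiplying by $\psi(t)$ kills the $t^n$ growth and reduces the $n$-th summand to the free Schr\"odinger evolution of a datum whose $H^s_x$-norm is controlled by $\|w\|_{X^{s,d}}$ (using $d>-\tfrac12$ so that $|\tau+\xi^2|^{-2d}\phi(\tau+\xi^2)$ is integrable in $\tau$). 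Combined with Lemma \ref{lemma-grupo-s}, this yields the space-trace bound (1); tracking the $\langle\tau+\xi^2\rangle^{b}$ weight on the Taylor remainder delivers the Bourgain estimate (4) in the stated range $0\le b\le d+1$.

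For the time-trace estimates (2) and (3) one fixes $x$ and analyses the two pieces separately. The near-characteristic piece is, modulo the Taylor bookkeeping above, a free Schr\"odinger evolution whose $H^{(2s+1)/4}$ time-trace is controlled by the Kato smoothing estimate \eqref{lemma-grupo-s-b}. The off-characteristic piece has the shape $\int e^{it\tau}M(\xi,\tau)\widetilde{w}(\xi,\tau)\,d\tau\,d\xi$ at fixed $x$; by Plancherel in $\tau$ its $H^{(2s+1)/4}$-norm reduces to integrating $\langle\tau\rangle^{(2s+1)/2}|M|^2$ against $|\widetilde{w}|^2$, and the resulting $\xi$-integral converges provided $-\tfrac12<s\le\tfrac12$, proving (2).

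The main obstacle is to extend this to $s\ge\tfrac12$, which is precisely estimate (3). In that range the low-frequency zone $|\xi|\le 1$ causes the previous $\tau$-integral to diverge, since $\langle\xi\rangle^{-s}$ no longer compensates for the $(2s+1)/4$-th time derivative near $\tau\sim-\xi^2$. The cure, and the reason for the auxiliary norm in (3), is to split the $\xi$-integration at $|\xi|=1$: on $|\xi|\ge 1$ the previous argument still works and produces $\|w\|_{X^{s,d}}$, while on $|\xi|\le 1$ one inflates $\langle\xi\rangle^{s}$ up to $\langle\xi\rangle^{1/2}$ at the cost of lowering the $|\tau+\xi^2|$-weight by exactly $(2s-1)/4$, converting the divergent low-frequency contribution into $\|w\|_{X^{1/2,(2s-1+4d)/4}}$. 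The upper bound $s<\tfrac52$ is dictated by convergence of the resulting Gamma-type integral. Continuity in $t$ and $x$ then follows by a standard density argument combined with Lemma \ref{Xsb-estimates-GTV} and the $\psi$-cutoff.
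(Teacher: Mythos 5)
Your outline is correct and is essentially the argument behind this lemma: the paper does not prove it but attributes it to Erdogan and Tzirakis \cite{tzirakis}, whose proof proceeds exactly by this splitting of the multiplier $(\tau+\xi^2)^{-1}$ into near- and off-characteristic pieces, Taylor expansion on the near piece, reduction to the free-group estimates of Lemma \ref{lemma-grupo-s}, and the low-frequency trade of $\langle\xi\rangle$-weight for $\langle\tau+\xi^2\rangle$-weight that produces the auxiliary $X^{\frac12,(2s-1+4d)/4}$ norm for $s\ge\tfrac12$. The only loose ends are bookkeeping ones (the off-characteristic region also contributes a free-evolution term coming from $e^{-it\xi^2}$, and the precise origin of the endpoint $s<\tfrac52$ is left vague), and these do not affect the structure of the argument.
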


\medskip 
Similar results for KdV equation were obtained by Faminskii in \cite{Faminskii}, which are described in the following lemma. 
\begin{lemma}\label{duhamel-kdv}
Let  $\kappa \ge 0$, $0<b<\frac12$ and  $\frac12<\alpha<\frac23$. Then we have the  following estimates for the operator $\mathcal{K}$:
\begin{align*}
&\|\psi(t)\mathcal{K}w(x,t)\|_{\mathcal{C}\big(\mathbb{R}_t;\,H^{\kappa}(\mathbb{R}_x)\big)}\lesssim \|w\|_{W^{\kappa,b,\alpha}},\\
&\|\psi(t)\mathcal{K}w(x,t)\|_{\mathcal{C}\big(\mathbb{R}_x;\,H^{(\kappa+1)/3}(\mathbb{R}_t)\big)}\lesssim  \|w\|_{W^{\kappa,b,\alpha}},\\
&\|\psi(t)\mathcal{K}w(x,t)\|_{Y^{\kappa,b,\alpha}}\lesssim  \|w\|_{W^{\kappa,b,\alpha}}.
\end{align*}
\end{lemma}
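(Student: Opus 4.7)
The plan is to reduce each of the three estimates to an analysis of the Fourier-side representation of $\psi(t)\mathcal{K}w$ and to exploit the two-piece structure of the weight $\nu$ defining $W^{\kappa,b,\alpha}$. Writing the Duhamel formula in Fourier variables,
\begin{equation*}
\psi(t)\mathcal{K}w(x,t) \;=\; \psi(t)\int_{\R^2} e^{ix\xi}\,\frac{e^{it\tau}-e^{it\xi^3}}{i(\tau-\xi^3)}\,\widehat{w}(\xi,\tau)\,d\xi\,d\tau,
\end{equation*}
my goal is to bound the right-hand side in the three target norms by $\|w\|_{W^{\kappa,b,\alpha}}$.

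First I would split the input as $\widehat{w} = \widehat{w}_{\mathrm{hi}} + \widehat{w}_{\mathrm{lo}}$, where $\widehat{w}_{\mathrm{lo}}$ is supported on $|\xi|\le 1$ and concentrates the mass in the region where the piece $(1+|\tau|)^{-(1-\alpha)}$ dominates $\nu$, while $\widehat{w}_{\mathrm{hi}}$ is the part on which $(1+|\tau-\xi^3|)^{-b}$ dominates. For the high piece, the plan is a standard Bourgain-style Duhamel estimate: split once more according to whether $|\tau-\xi^3|\ge 1$ or $\le 1$. On the non-resonant region, a direct multiplier argument combined with the homogeneous bounds of Lemma \ref{lemma-grupo-kdv} produces the three target norms with a loss of exactly one power of $(1+|\tau-\xi^3|)^{b}$. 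On the resonant region one Taylor-expands the kernel,
\begin{equation*}
\frac{e^{it\tau}-e^{it\xi^3}}{i(\tau-\xi^3)} \;=\; \sum_{k\ge 1}\frac{(it)^k(\tau-\xi^3)^{k-1}}{k!}\,e^{it\xi^3},
\end{equation*}
yielding a rapidly convergent series whose $k$-th term is $(it)^k/k!$ times $e^{-t\partial_x^3}$ applied to a smoothed profile; the homogeneous bounds of Lemma \ref{lemma-grupo-kdv} then apply termwise after absorbing powers of $t$ into $\psi$.

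For the low piece, supported on $|\xi|\le 1$, the spatial derivatives cost nothing and the Bourgain-type weight $\mu$ becomes essentially $|\tau|^{\kappa/3}$. The difficulty here is the non-integrable factor $(1+|\tau|)^{-(1-\alpha)}$ with $\tfrac13<1-\alpha<\tfrac12$. I would exploit the uniform bound $\bigl|(e^{it\tau}-e^{it\xi^3})/(\tau-\xi^3)\bigr|\le |t|$ on the support of $\psi$, and use the Schwartz decay of $\widehat{\psi}$ to turn the effective weight in $\tau$ into $(1+|\tau|)^{-N}$ after the convolution coming from the multiplication by $\psi(t)$; Cauchy--Schwarz in $\tau$ against the $L^2$-norm of $(1+|\tau|)^{-(1-\alpha)}\widehat{w}_{\mathrm{lo}}$ then closes the estimate in each of the three target norms, the condition $\alpha>\tfrac12$ being exactly what one needs for the pairing to converge.

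The main obstacle will be the uniform-in-$x$ trace estimate $\|\psi(t)\mathcal{K}w\|_{\mathcal{C}(\R_x;H^{(\kappa+1)/3}(\R_t))}$: at a fixed $x$ the operator is not easily handled by Plancherel, and one must balance the gain from $(1+|\tau-\xi^3|)^{-b}$ against the loss from measuring a boundary trace at an arbitrary $x$. I would handle it by writing $\mathcal{K}w$ as a superposition of translates of the Airy-type kernel $\tfrac{3}{t-t'}A''\!\bigl(\tfrac{x}{(t-t'){}^{1/3}}\bigr)$ appearing in $\mathcal{V}$, and using van der Corput and stationary-phase bounds for $A''$ to control the $x$-dependence; the narrow window $\tfrac12<\alpha<\tfrac23$ is precisely what makes this quantitative step balance out and reproduce the scaling $(\kappa+1)/3$ of the boundary Sobolev exponent.
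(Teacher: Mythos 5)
The paper does not prove this lemma at all: it is imported verbatim from Faminskii's work (the sentence preceding the statement attributes these estimates to \cite{Faminskii}, Lemmas on the inhomogeneous Airy operator there), so the only honest comparison is between your sketch and Faminskii's actual argument. Your overall skeleton --- the Fourier representation of the Duhamel term, the split into resonant ($|\tau-\xi^3|\le 1$, Taylor expansion) and non-resonant pieces, and a separate treatment of $|\xi|\le 1$ --- is the standard and correct starting point, and it is essentially what Faminskii does. But two steps in your proposal have genuine gaps. First, you never produce the low-frequency component $\chi_{[-1,1]}(\xi)(1+|\tau|)^{\alpha}$ of the \emph{output} weight $\mu$ defining $Y^{\kappa,b,\alpha}$. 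For the multiplier piece $e^{it\tau}\widehat{w}(\xi,\tau)/(i(\tau-\xi^3))$ at $|\xi|\le 1$, the input is only controlled with the weakened weight $(1+|\tau|)^{-(1-\alpha)}$, and one must show that after convolving with $\widehat{\psi}$ the result still carries the strengthened weight $(1+|\tau|)^{\alpha}$; this is the entire point of Faminskii's modified spaces and the place where the constraint $\frac12<\alpha<\frac23$ is actually used, yet your sketch addresses only the input side of the pairing. Second, your plan for the trace bound $\|\psi\mathcal{K}w\|_{\mathcal{C}(\R_x;H^{(\kappa+1)/3}(\R_t))}$ via a superposition of the boundary-potential kernel $A''$ is not a valid reduction: that kernel solves the IBVP with prescribed boundary data and zero forcing, whereas $\mathcal{K}$ is the whole-line inhomogeneous solution operator, and $\mathcal{K}w$ is not naturally a superposition of translates of it. The correct route is to apply Minkowski and Cauchy--Schwarz in $\xi$ at fixed $x$, where the integral $\int \langle\tau-\xi^3\rangle^{-(2-2b)}\,d\xi$ degenerates precisely near $\xi=0$; that degeneration is what the $\alpha$-weights compensate, and no van der Corput estimate on $A''$ is involved.

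A smaller but still necessary point: in the Taylor-expansion step you must verify that the operator norms of $w\mapsto \psi(t)t^k e^{-t\partial_x^3}(\cdots)$ in the three target spaces grow at most geometrically in $k$, so that the $1/k!$ makes the series converge; this is routine but cannot be omitted, since the $Y^{\kappa,b,\alpha}$ norm of $\psi(t)t^k$ times a free solution involves $\|\psi(t)t^k\|_{H^1_t}$-type factors. If you want a self-contained proof you should follow Faminskii's Lemmas directly (or the analogous Schr\"odinger computation in \cite{tzirakis}, which your Lemma \ref{duhamel-s} reproduces), where all three of these points are carried out.
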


\medskip 
\subsection{\textbf{Nonlinear Estimates}}\label{nonlinearestimates}
Now we summarize the main  nonlinear estimates  that are needed in the proof of well-posedeness results.   

\medskip 
The trilinear estimate corresponding to the cubic-NLS equation, in the context of Borgain spaces with $b< 1/2$, was deduced by 
Erdogan and Tzirakis in \cite{tzirakis}. 
\begin{lemma}\label{trilinear}
Let $s>0$ and $a\leq \min\big\{2s,\frac12\big\}$, then there exists $\epsilon=\epsilon(a, s)>0$ such that 
	\begin{equation*}
	\|u_1u_2\bar{u}_3\|_{X^{s+a,-b}}\lesssim \|u_1\|_{X^{s,b}}\|u_2\|_{X^{s,b}}\|u_3\|_{X^{s,b}}.
	\end{equation*}
for all  $\frac{1}{2}-\epsilon<b<\frac{1}{2}$ and $u_i\in X^{s,b}$, $i=1,2,3$. 
\end{lemma}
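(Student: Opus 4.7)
The estimate is a classical multilinear $X^{s,b}$ bound in one space dimension, and the natural route is duality together with an analysis of the resonance function; the only novelty compared with the standard $b = \tfrac12$ case is the extra room that one needs to keep $b$ strictly below $\tfrac12$, which the parameter $\epsilon$ provides.

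First, I would dualize: test the $X^{s+a,-b}$ norm against a function $u_4$ with $\|u_4\|_{X^{-s-a,b}} \le 1$, reducing the claim to a bound on the quadrilinear form
\[
I(u_1,u_2,u_3,u_4) = \int_{\mathbb{R}^2} u_1 u_2 \bar u_3 \bar u_4 \, dx\,dt.
\]
Via Plancherel, $I$ becomes an integral on the set $\xi_1+\xi_2 = \xi_3+\xi_4$, $\tau_1+\tau_2 = \tau_3+\tau_4$ of the product of the four spacetime Fourier transforms weighted by $\langle\xi_i\rangle^{-s_i}\langle\tau_i+\xi_i^2\rangle^{-b_i}$ (with $s_4 = -(s+a)$, $b_4 = -b$). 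I would then dyadically decompose the four frequencies $|\xi_i|\sim N_i$ and the four modulations $\sigma_i := |\tau_i+\xi_i^2|\sim L_i$.

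Second, I would invoke the algebraic identity
\[
(\tau_1+\xi_1^2) + (\tau_2+\xi_2^2) - (\tau_3+\xi_3^2) - (\tau_4+\xi_4^2) = 2(\xi_1-\xi_3)(\xi_1-\xi_4),
\]
which holds on the support of the integrand and forces $\max_i L_i \gtrsim |\xi_1-\xi_3||\xi_1-\xi_4|$. The proof then splits into four cases according to which $L_i$ is maximal. In every case, the excess modulation weight is traded for $a$ derivatives on the output. The balance is closed by Cauchy--Schwarz in two of the frequency-modulation variables, together with the one-dimensional Strichartz-type embedding $\|u\|_{L^4_{t,x}} \lesssim \|u\|_{X^{0,3/8+}}$, which remains available in the range $\tfrac12-\epsilon < b < \tfrac12$ as long as $\epsilon < \tfrac18$. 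The threshold $a \leq \min(2s,\tfrac12)$ enters naturally: the $\tfrac12$ comes from the two powers of frequency produced by the resonance identity, distributed between the output weight $\langle\xi_1+\xi_2-\xi_3\rangle^{s+a}$ and the modulation factor; the $2s$ comes from redistributing the three input weights $\langle\xi_i\rangle^s$ to form $\langle\xi_1+\xi_2-\xi_3\rangle^{s+a}$.

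The main obstacle is the near-resonant regime, where all four $N_i$ are comparable and the four $L_i$ are small and comparable. In this regime the integrand is nearly singular, and one must exploit the full strength of the resonance identity while absorbing the $\tfrac12 - b = O(\epsilon)$ loss coming from the deficit of modulation weight. This is the precise place where $\epsilon$ must be chosen small depending on both $s$ and on how close $a$ is to the endpoint $\min(2s,\tfrac12)$. Since the complete verification is identical to the one carried out by Erdogan and Tzirakis in \cite{tzirakis} in the context of the IBVP for the cubic NLS, I would simply structure the argument as above and refer the reader there for the remaining bookkeeping.
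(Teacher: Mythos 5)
Your proposal is correct and lands exactly where the paper's own ``proof'' begins and ends: the paper does not prove this lemma but simply quotes it from Erdogan and Tzirakis \cite{tzirakis}, which is the same reference you defer to for the remaining bookkeeping. Your preliminary sketch --- duality, the resonance identity $\xi_1^2+\xi_2^2-\xi_3^2-\xi_4^2=-2(\xi_1-\xi_3)(\xi_2-\xi_3)$ on the convolution constraint, and the one-dimensional $L^4$ Strichartz embedding available for $b\ge 3/8$ --- is a faithful outline of the mechanism behind the cited estimate.
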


\medskip
The bilinear estimate for de nonlinear term of the KdV equation, in the context of Bourgain spaces with $b<\frac{1}{2}$,  was derived by Faminskii in \cite{Faminskii}. 
\begin{lemma}\label{bilinear1}
Let $\kappa >0$, $\frac{7}{16}<b<\frac12$ and $\frac12<\alpha <\frac23$. Then there exists $\epsilon=\epsilon(b,\alpha)>0$ such that 
\begin{equation*}
\big\|\psi_T^2(t)\partial_x (v_1v_2)\big\|_{W^{\kappa,b,\alpha}}\lesssim T^{\epsilon}\|v_1\|_{Y^{\kappa,b,\alpha}}\|v_2\|_{Y^{\kappa,b,\alpha}}.
\end{equation*}
for any $v_i\in Y^{\kappa,b,\alpha}$, $i=1,2$. 
\end{lemma}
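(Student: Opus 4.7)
The plan is to reduce the bilinear estimate to a weighted convolution estimate on $\mathbb{R}^2$ via duality, and then carry out a dyadic case analysis driven by the KdV algebraic identity
$$(\tau-\xi^3)-(\tau_1-\xi_1^3)-(\tau_2-\xi_2^3)=3\,\xi\,\xi_1\,\xi_2\qquad (\xi=\xi_1+\xi_2,\;\tau=\tau_1+\tau_2).$$
First I would rewrite the $W^{\kappa,b,\alpha}$-norm on the left-hand side by Plancherel as
$$\sup_{\|g\|_{L^2}=1}\left|\iint \xi\,(1+|\xi|+|\tau|^{1/3})^{\kappa}\,\nu(\xi,\tau)\,\widehat{\psi_T^2 v_1 v_2}(\xi,\tau)\,\overline{g(\xi,\tau)}\,d\xi\,d\tau\right|,$$
expand the product $v_1v_2$ as a convolution in Fourier, and set
$$f_i(\xi_i,\tau_i):=(1+|\xi_i|+|\tau_i|^{1/3})^{\kappa}\,\mu(\xi_i,\tau_i)\,\widehat{v_i}(\xi_i,\tau_i),\qquad \|f_i\|_{L^2_{\xi,\tau}}=\|v_i\|_{Y^{\kappa,b,\alpha}}.$$
The problem becomes to bound a trilinear $L^2$-pairing with the symbol
$$K(\xi,\xi_1,\tau,\tau_1)=\frac{|\xi|\,(1+|\xi|+|\tau|^{1/3})^{\kappa}\,\nu(\xi,\tau)}{\prod_{i=1,2}(1+|\xi_i|+|\tau_i|^{1/3})^{\kappa}\mu(\xi_i,\tau_i)}$$
uniformly over $g,f_1,f_2$ of unit $L^2$ norm, the $\psi_T^2$-factor eventually producing the gain $T^{\epsilon}$.

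Next I would peel off the regularity weight $(1+|\xi|+|\tau|^{1/3})^{\kappa}$ using the pointwise bound
$$(1+|\xi|+|\tau|^{1/3})^{\kappa}\lesssim (1+|\xi_1|+|\tau_1|^{1/3})^{\kappa}+(1+|\xi_2|+|\tau_2|^{1/3})^{\kappa},$$
reducing matters to the case $\kappa=0$ on (say) the factor carrying the maximal weight. One then decomposes dyadically in the "resonance" sizes $L_0\sim 1+|\tau-\xi^3|$, $L_1\sim 1+|\tau_1-\xi_1^3|$, $L_2\sim 1+|\tau_2-\xi_2^3|$, and in the spatial frequencies $N\sim 1+|\xi|$, $N_j\sim 1+|\xi_j|$. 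The algebraic identity forces $\max(L_0,L_1,L_2)\gtrsim |\xi\,\xi_1\,\xi_2|$, so the extra derivative $\xi$ on the output is absorbed by whichever $(1+|\tau_i-\xi_i^3|)^b$ is largest, provided $b>7/16$. The estimate in the purely high-frequency regime is then routine Cauchy–Schwarz/Strichartz-style reasoning on the convolution, essentially as in the classical Kenig–Ponce–Vega bilinear estimate for KdV.

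The delicate step — and what the non-standard spaces are designed for — is the low-frequency regime where $|\xi|\lesssim 1$ or $|\xi_j|\lesssim 1$, because there the Bourgain weight $\langle\tau-\xi^3\rangle^b$ with $b<\tfrac12$ alone does not control the zero-mode. Here I would split into the cases according to which of the two summands in $\mu(\xi_i,\tau_i)$ and $\nu(\xi,\tau)$ is active (the Bourgain weight vs.\ the singular low-frequency weight $\chi_{[-1,1]}(\xi)\langle\tau\rangle^\alpha$ or $\chi_{[-1,1]}(\xi)\langle\tau\rangle^{-(1-\alpha)}$). In each sub-case one uses either the algebraic identity (when the relevant variable is away from zero) or a one-dimensional Hardy–Young type estimate in $\tau$ (when $|\xi|\lesssim 1$ and the singular weight kicks in), exploiting $\tfrac12<\alpha<\tfrac23$ to keep the $\tau$-integrals convergent. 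The derivative $\xi$ on the output is harmless in these regions since $|\xi|\lesssim 1$.

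Finally, the factor $T^{\epsilon}$ is extracted from the time cutoff $\psi_T^2$: after all the frequency estimates one uses Lemma \ref{faminski1} (or the analogous temporal Sobolev embedding hidden in $\psi_T$) to trade a small amount of $\langle\tau-\xi^3\rangle^{b'}$ for a factor $T^{\epsilon}$, which is possible as long as $b<\tfrac12$ strictly. The main obstacle in this scheme is clearly the low-frequency/resonant case analysis: one must verify that \emph{every} pairing of the two pieces of $\mu$ on the input side with the two pieces of $\nu$ on the output side closes, and this is precisely the point at which the restrictions $b>\tfrac{7}{16}$ and $\tfrac12<\alpha<\tfrac23$ become sharp.
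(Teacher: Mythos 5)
There is no argument in the paper to compare against: the paper states Lemma \ref{bilinear1} and attributes it directly to Faminskii \cite{Faminskii}, where it is proved by exactly the strategy you describe (duality, the resonance identity $(\tau-\xi^3)-(\tau_1-\xi_1^3)-(\tau_2-\xi_2^3)=-3\xi\xi_1\xi_2$, and a case analysis over the two summands of $\mu$ and $\nu$). So your plan is the right plan. The difficulty is that, as written, it is only a plan: you yourself defer the decisive step, writing that ``one must verify that every pairing of the two pieces of $\mu$ on the input side with the two pieces of $\nu$ on the output side closes'' and that this is ``precisely the point at which the restrictions $b>\tfrac{7}{16}$ and $\tfrac12<\alpha<\tfrac23$ become sharp.'' Those verifications are the entire content of the lemma. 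The high-frequency regime you dispose of is indeed routine; the lemma lives or dies in the regions $|\xi|\lesssim 1$ or $|\xi_j|\lesssim 1$, where the output derivative is harmless but the denominator weight degenerates to $\langle\tau_j\rangle^{\alpha}$ and one must check that the $\tau$-integrals close with $b<\tfrac12$ on the remaining Bourgain weights. Without at least one worked sub-case exhibiting where $b>\tfrac{7}{16}$ is used, the proposal cannot be accepted as a proof; it is an accurate table of contents for Faminskii's proof.

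Two smaller points. First, the extraction of $T^{\epsilon}$ needs more care than ``apply Lemma \ref{faminski1}'': that lemma, as stated in the paper, yields the exponent $\tfrac12-\alpha-\tfrac{\kappa}{3}$, which is negative for $\alpha>\tfrac12$, so the gain cannot come from crudely localizing the $Y^{\kappa,b,\alpha}$-norm; one must prove the bilinear bound with a strictly smaller auxiliary exponent (a $b'<b$ on one factor, in the spirit of Lemma \ref{Xsb-estimates-GTV}) and convert the surplus into $T^{\epsilon}$. Second, the reduction ``to the case $\kappa=0$ on the factor carrying the maximal weight'' via the triangle inequality for $(1+|\xi|+|\tau|^{1/3})^{\kappa}$ is fine, but note it leaves an unweighted factor whose $Y^{0,b,\alpha}$-norm must still control the convolution, so the subsequent case analysis is genuinely for the $\kappa=0$ endpoint and not simplified by taking $\kappa$ large.
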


\medskip 
The following mixed bilinear estimates can be obtained without major difficulties following the same ideas in \cite{CM}.

\begin{lemma}\label{acoplamento1}
	Let $s,\ \kappa,\ a,\ b$ and $\alpha$ real numbers such that  $\kappa-|s|> \max\big\{2-6b,\,\frac{5}{2}-9a\big\}$, $\frac{7}{18}<2b-\tfrac12 \le a<b$ and $\frac12<\alpha <\frac23$. Then we have
	\begin{equation*}
	\|uv\|_{X^{s,-a}}\lesssim \|u\|_{X^{s,b}}\|v\|_{Y^{\kappa,b,\alpha}}
	\end{equation*}
	for any $u\in X^{s,b}$ and $v\in Y^{\kappa,b}$.
\end{lemma}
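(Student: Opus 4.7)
The plan is to establish the estimate by duality and Fourier analysis, mirroring the scheme used for the analogous coupling in \cite{CM}. Setting $f_1(\xi_1,\tau_1):=\langle\xi_1\rangle^s\langle\tau_1+\xi_1^2\rangle^b\widehat{u}(\xi_1,\tau_1)$ and $f_2(\xi_2,\tau_2):=(1+|\xi_2|+|\tau_2|^{1/3})^\kappa\mu(\xi_2,\tau_2)\widehat{v}(\xi_2,\tau_2)$, and dualizing $X^{s,-a}$ against an $L^2$-test function $g$, Plancherel reduces the claim to the trilinear bound
$$\iiiint K(\xi_1,\tau_1,\xi,\tau)\,f_1(\xi_1,\tau_1)\,f_2(\xi-\xi_1,\tau-\tau_1)\,g(\xi,\tau)\,d\xi_1\,d\tau_1\,d\xi\,d\tau \;\lesssim\; \|f_1\|_{L^2}\|f_2\|_{L^2}\|g\|_{L^2},$$
where the multiplier is
$$K=\frac{\langle\xi\rangle^s\langle\tau+\xi^2\rangle^{-a}}{\langle\xi_1\rangle^s\,\langle\tau_1+\xi_1^2\rangle^b\,(1+|\xi_2|+|\tau_2|^{1/3})^\kappa\,\mu(\xi_2,\tau_2)}$$
evaluated at $\xi_2=\xi-\xi_1,\ \tau_2=\tau-\tau_1$. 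The $|s|$ appearing in the hypotheses arises because the ratio $\langle\xi\rangle^s/\langle\xi_1\rangle^s$ must be controlled in both signs of $s$ via $\xi=\xi_1+\xi_2$.

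The decisive algebraic input is the resonance identity
$$(\tau+\xi^2)-(\tau_1+\xi_1^2)-(\tau_2-\xi_2^3)\;=\;\xi_2\bigl(2\xi_1+\xi_2+\xi_2^2\bigr),$$
so that at least one of $\langle\tau+\xi^2\rangle$, $\langle\tau_1+\xi_1^2\rangle$, $\langle\tau_2-\xi_2^3\rangle$ dominates $|\xi_2|\cdot|2\xi_1+\xi_2+\xi_2^2|$, furnishing a source of derivatives that can be traded into the Schr\"odinger or KdV weights as needed. I would then make a dyadic decomposition of the frequencies and split into two regimes driven by the branched definition of $\mu$. In the principal regime $|\xi_2|>1$, use $\mu\gtrsim\langle\tau_2-\xi_2^3\rangle^b$; depending on which of the three modulations dominates, apply the resonance identity to transfer weights, then use Cauchy--Schwarz in $(\xi_1,\tau_1)$ and the classical one-dimensional calculus bounds. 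The finiteness of the resulting integrals is guaranteed precisely by $\kappa-|s|>2-6b$ together with $a<b$. In the low-frequency regime $|\xi_2|\le 1$, use $\mu\gtrsim(1+|\tau_2|)^\alpha$; since $\tau_2-\xi_2^3\sim\tau_2$ there, large KdV-modulation and large $\tau_2$ coincide, and a similar Cauchy--Schwarz argument closes under $\kappa-|s|>5/2-9a$, the condition $a\ge 2b-\tfrac12$ being exactly what allows the $\alpha$-weight in $\tau_2$ to balance the two Schr\"odinger modulations in $\tau_1$ and $\tau$.

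The hardest point will be the low-frequency regime $|\xi_2|\le 1$, where the right-hand side of the resonance identity degenerates and no useful derivative gain is available on the KdV-side from spatial frequencies; this is the technical reason why Faminskii's auxiliary weight $\chi_{[-1,1]}(\xi_2)(1+|\tau_2|)^\alpha$ is indispensable, and the combined restrictions $1/2<\alpha<2/3$, $2b-1/2\le a<b$ and $7/18<2b-1/2$ are exactly what is needed for the low-frequency sub-integrals to converge. Once these two regimes are treated, the remaining computations parallel the standard coupling estimate of \cite{CM}, with only the low-frequency sub-argument modified to accommodate the $Y^{\kappa,b,\alpha}$-norm in place of the usual KdV-Bourgain norm.
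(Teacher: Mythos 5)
Your overall strategy is the right one and is, as far as one can tell, the same as that of the source the paper relies on: the paper's entire proof of this lemma is the single line ``Follows from Proposition 5.1 in \cite{CM}'', and the duality reduction, the resonance identity
\begin{equation*}
(\tau+\xi^2)-(\tau_1+\xi_1^2)-(\tau_2-\xi_2^3)=\xi_2\bigl(2\xi_1+\xi_2+\xi_2^2\bigr)
\end{equation*}
(which you state correctly), and the dichotomy between $|\xi_2|>1$, where $\mu\gtrsim\langle\tau_2-\xi_2^3\rangle^b$, and $|\xi_2|\le 1$, where the Faminskii weight $(1+|\tau_2|)^{\alpha}$ takes over, are exactly the ingredients of the argument being invoked. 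Your identification of the low-frequency regime as the reason the $Y^{\kappa,b,\alpha}$ norm is needed is also on target.

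However, what you have written is a roadmap, not a proof. In a bilinear estimate of this kind essentially all of the content lives in the case analysis you defer: one must split according to which of the three modulations dominates $|\xi_2|\,|2\xi_1+\xi_2+\xi_2^2|$, perform the Cauchy--Schwarz and one-dimensional calculus estimates in each case, and verify that the resulting integrals converge under the stated numerology. Your assertions that convergence ``is guaranteed precisely by $\kappa-|s|>2-6b$ together with $a<b$'' in the high-frequency regime, and by $\kappa-|s|>\frac{5}{2}-9a$ with $a\ge 2b-\frac{1}{2}$ in the low-frequency regime, are plausible but are stated without computation, and extracting the exponents $2-6b$ and $\frac{5}{2}-9a$ from those integrals is precisely where the work lies. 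Since the paper itself supplies nothing beyond the citation, your numerology cannot be checked against it here; as submitted, the proposal describes the shape of the argument in \cite{CM} but does not itself establish the lemma.
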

\begin{proof}
Follows from Proposition 5.1 in \cite{CM}.
\end{proof}
\begin{lemma}
Let $s,\ \kappa,\ b,$ and $\alpha$ real numbers such that  $\frac38<b<\frac12$, $2s-1-4b<0$, $s,\ \kappa>\frac12$ and $\frac12<\alpha <\frac23$. Then we have
\begin{equation*}
\|uv\|_{X^{\frac{1}{2},\,(2s-1-4b)/4}}\lesssim \|u\|_{X^{s,b}}\|v\|_{Y^{\kappa,b,\alpha}}.
\end{equation*}
for any $u\in X^{s,b}$ and $v\in Y^{\kappa,b,\alpha}$
\end{lemma}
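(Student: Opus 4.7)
The plan is to adapt the dyadic-decomposition strategy used in Proposition 5.1 of \cite{CM}. Setting $a:=(4b+1-2s)/4$, which is strictly positive by the hypothesis $2s-1-4b<0$, the estimate is equivalent by duality to the trilinear bound
\[
\Big|\iint u v\,\overline{w}\,dx\,dt\Big| \lesssim \|u\|_{X^{s,b}}\,\|v\|_{Y^{\kappa,b,\alpha}}\,\|w\|_{X^{-1/2,\,a}}.
\]
Normalizing all three factors on the right to one and passing to the Fourier side under the convolution constraint $(\xi,\tau)=(\xi_1,\tau_1)+(\xi_2,\tau_2)$, the task becomes to dominate, uniformly in non-negative $L^2$-unit functions $f_1,f_2,f_3$, the integral
\[
I=\int \frac{\langle\xi\rangle^{1/2}\,f_1(\xi_1,\tau_1) f_2(\xi_2,\tau_2) f_3(\xi,\tau)}{\langle\xi_1\rangle^{s}\langle\tau_1+\xi_1^{2}\rangle^{b}\,(1+|\xi_2|+|\tau_2|^{1/3})^{\kappa}\,\mu(\xi_2,\tau_2)\,\langle\tau+\xi^{2}\rangle^{a}}.
\]

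The algebraic engine is the resonance identity
\[
(\tau+\xi^{2})-(\tau_1+\xi_1^{2})-(\tau_2-\xi_2^{3})=\xi_2\,(\xi_2^{2}+\xi_2+2\xi_1),
\]
which implies $\max\{L,L_1,L_2\}\gtrsim |\xi_2|\,|\xi_2^{2}+\xi_2+2\xi_1|$ where $L=\langle\tau+\xi^{2}\rangle$, $L_1=\langle\tau_1+\xi_1^{2}\rangle$, $L_2=\langle\tau_2-\xi_2^{3}\rangle$. I would then decompose dyadically in $|\xi_1|,|\xi_2|,|\xi|$ and in the three modulations, and split into the usual frequency-interaction regions. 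In the \emph{high--high} region $|\xi_1|\sim|\xi_2|\gg|\xi|$, the output weight $\langle\xi\rangle^{1/2}$ is dominated by either $\langle\xi_1\rangle^{s}$ or $\langle\xi_2\rangle^{\kappa}$ (since $s,\kappa>1/2$), and the residual sum is tamed by the strong resonance $|\xi_2|^{3}$. In the \emph{low--high} region $|\xi|\sim|\xi_2|\gg|\xi_1|$, the ratio $\langle\xi\rangle^{1/2}/\langle\xi_2\rangle^{\kappa}$ provides a summable $\langle\xi_2\rangle^{1/2-\kappa}$.

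In the \emph{high--low} region $|\xi|\sim|\xi_1|\gg|\xi_2|\gtrsim 1$, one already has a dyadic gain $\langle\xi_1\rangle^{1/2-s}$ together with the resonance gain $\gtrsim|\xi_1\xi_2|$. Separating into three subcases according to which modulation is largest, one closes with Plancherel, the $L^{4}_{t,x}$ Strichartz estimate controlled by $X^{s,b}$ for $b>3/8$, and the one-dimensional $L^{\infty}_{x}L^{2}_{t}$ smoothing bound for the KdV free propagator encoded by $Y^{\kappa,b,\alpha}$; the combination of gains is consistent with the hypothesis $2s-1-4b<0$, which guarantees that the remaining output weight $\langle\tau+\xi^{2}\rangle^{-a}$ absorbs the leftover modulation factor.

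The main obstacle will be the low-KdV-frequency region $|\xi_2|\lesssim 1$, where both the algebraic gain $|\xi_2|$ from the resonance and the weight $(1+|\xi_2|)^{\kappa}$ are useless. This is precisely the situation for which Faminskii introduced the second summand of $\mu$, namely $\chi_{[-1,1]}(\xi_2)(1+|\tau_2|)^{\alpha}$ with $\alpha>1/2$. Restricting $\xi_2$ to $[-1,1]$ and using $\mu(\xi_2,\tau_2)\gtrsim(1+|\tau_2|)^{\alpha}$, Cauchy--Schwarz in $\tau_2$ produces the convergent factor $\int(1+|\tau_2|)^{-2\alpha}\,d\tau_2<\infty$, and the remaining bilinear pairing between $u$ and $w$ reduces to a standard $X^{s,b}\cdot X^{1/2,-a}$ product estimate that holds because $s>1/2$ and $0<a<b<1/2$. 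Summing the dyadic pieces then yields the claimed inequality; the strict hypotheses on $b$, $s$, $\kappa$, $\alpha$ are used exactly at the borderline cases in each of the above regions.
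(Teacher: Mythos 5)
Your proposal follows essentially the same route as the paper, whose entire proof of this lemma is a citation to Case 1 of Proposition 5.2 in \cite{CM}: duality, the resonance identity $(\tau+\xi^{2})-(\tau_1+\xi_1^{2})-(\tau_2-\xi_2^{3})=\xi_2(\xi_2^{2}+\xi_2+2\xi_1)$, dyadic region analysis, and Faminskii's low-frequency weight $\chi_{[-1,1]}(\xi_2)(1+|\tau_2|)^{\alpha}$ with $\alpha>\tfrac12$ to handle $|\xi_2|\lesssim 1$, where the resonance gain degenerates. The key algebraic and structural ingredients are all correctly identified, so the outline is consistent with the argument the paper delegates to \cite{CM}.
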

\begin{proof}
The estimate can be obtained following the Case 1 in the proof of Proposition 5.2 in \cite{CM}.
\end{proof}

\begin{lemma}
Let $s,\ \kappa,\ b,\ \alpha$ real numbers such that $s\geq \frac{3}{10}$, $\frac14<b<\frac12$,  $\frac12<\alpha <\frac23$ 
and 
$$0< \kappa \leq \big\{s+3b-1,\, \tfrac{3b}{2},\, \tfrac{2s+3b-7}{2} \big\}.$$ 
Then we have
\begin{equation*}
\|\partial_x(u_1u_2)\|_{W^{\kappa,b,\alpha}}\leq c \|u_1\|_{X^{s,b}}\|u_2\|_{X^{s,b}}.
\end{equation*}
for any $u_i\in X^{s,b}$, $i=1,2$. 
\end{lemma}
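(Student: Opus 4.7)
The plan is to establish this estimate by Plancherel, duality, and a case analysis on frequencies, in the spirit of the bilinear estimate of Corcho-Linares \cite{CL} for the $(|u|^2)_x$ forcing term of the Schr\"odinger-KdV system on the line, now adapted to the two-scale weight $\nu$ of Faminskii's modified Bourgain space $W^{\kappa,b,\alpha}$. First, I split $\nu(\xi,\tau)=\nu_1(\xi,\tau)+\nu_2(\xi,\tau)$ with
$$\nu_1(\xi,\tau)=(1+|\tau-\xi^3|)^{-b},\qquad \nu_2(\xi,\tau)=\chi_{[-1,1]}(\xi)(1+|\tau|)^{\alpha-1},$$
which separates the ``KdV-dispersive'' and ``low-frequency'' regimes. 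By Plancherel and duality, it suffices to bound the trilinear forms
$$I_j=\int\frac{|\xi|\,(1+|\xi|+|\tau|^{1/3})^{\kappa}\,f_1(\xi_1,\tau_1)\,f_2(\xi-\xi_1,\tau-\tau_1)\,g(\xi,\tau)}{\nu_j(\xi,\tau)^{-1}\langle\xi_1\rangle^s\langle\xi-\xi_1\rangle^s\sigma_1^b\,\sigma_2^b}\,d\xi_1 d\tau_1 d\xi d\tau,$$
where $\sigma_1:=\langle\tau_1+\xi_1^2\rangle$, $\sigma_2:=\langle(\tau-\tau_1)+(\xi-\xi_1)^2\rangle$, and $\|f_j\|_{L^2}=\|g\|_{L^2}=1$.

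For the $\nu_1$ piece I rely on the algebraic resonance identity
$$(\tau-\xi^3)-(\tau_1+\xi_1^2)-\bigl((\tau-\tau_1)+(\xi-\xi_1)^2\bigr)=-\xi^3-\xi_1^2-(\xi-\xi_1)^2,$$
so setting $\sigma_0:=\langle\tau-\xi^3\rangle$ one has $\max(\sigma_0,\sigma_1,\sigma_2)\gtrsim |\xi|^3+|\xi_1|^2+|\xi-\xi_1|^2$. I split into three subcases according to which of $\sigma_0,\sigma_1,\sigma_2$ is dominant. In each subcase the resonance supplies at least $b$ derivatives that can be used to cancel the frequency weight $|\xi|(1+|\xi|+|\tau|^{1/3})^{\kappa}$; after applying Cauchy-Schwarz in the appropriate modulation variable and Fubini in the frequencies, the constraint $\kappa\leq s+3b-1$ is precisely what makes the remaining $L^2$-multiplier finite, while $b>1/4$ guarantees integrability of the emerging one-dimensional factors (along the lines of the calculus lemmas in \cite{GTV} and \cite{Faminskii}).

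For the $\nu_2$ piece, the support condition $|\xi|\leq 1$ eliminates the $\partial_x$ loss, since $|\xi|(1+|\xi|+|\tau|^{1/3})^{\kappa}\lesssim (1+|\tau|)^{\kappa/3}$, and the matter reduces to absorbing the weight $(1+|\tau|)^{\kappa/3+\alpha-1}$. The bound $\kappa\leq 3b/2$ combined with $\alpha<2/3$ makes this exponent strictly negative, and the fact that $|\xi|\leq 1$ forces $|\xi_1|\sim|\xi-\xi_1|$ so that the hypothesis $s\geq 3/10$ yields sufficient polynomial decay in $\xi_1$ to close Cauchy-Schwarz. The third upper bound on $\kappa$ is used to balance the temporal-frequency allocation between $g$ and the Schr\"odinger modulations in this same regime. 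The principal obstacle I anticipate is the resonance subcase in which $\sigma_0$ is the largest modulation but $\nu_1^{-1}=\sigma_0^{b}$ contributes in the numerator rather than the denominator: here the gain from $\sigma_0$ must be recovered from the dual function $g$ via Cauchy-Schwarz in $\tau$, and making this work at the endpoint $\kappa=s+3b-1$ while simultaneously preserving compatibility with the low-frequency estimate is the most delicate bookkeeping step.
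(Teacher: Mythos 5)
The paper does not actually prove this lemma: it simply invokes Propositions 5.3 and 5.4(a) of \cite{CM}, where the full frequency/modulation case analysis is carried out. Your framework (Plancherel, duality, splitting $\nu=\nu_1+\nu_2$, and a case analysis on the dominant modulation) is the standard and appropriate one, and your numerology is consistent: trading $3b$ powers of $\xi$ from the resonance against the weight $|\xi|^{1+\kappa}\langle\xi_1\rangle^{-s}\langle\xi-\xi_1\rangle^{-s}$ does produce the constraint $\kappa\le s+3b-1$, and $b>\tfrac14$ is exactly what makes the one-dimensional convolution integrals $\int\langle\tau_1\rangle^{-2b}\langle\tau-\tau_1\rangle^{-2b}\,d\tau_1\lesssim\langle\tau\rangle^{1-4b}$ converge.

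There is, however, a genuine gap at the heart of the argument. The claimed lower bound $\max(\sigma_0,\sigma_1,\sigma_2)\gtrsim|\xi|^3+|\xi_1|^2+|\xi-\xi_1|^2$ is false: the triangle inequality applied to your (correct) algebraic identity only yields
\begin{equation*}
\max(\sigma_0,\sigma_1,\sigma_2)\gtrsim\bigl|\xi^3+\xi_1^2+(\xi-\xi_1)^2\bigr|,
\end{equation*}
and for $\xi<0$ the cubic term and the two quadratic terms have opposite signs, so the resonance function vanishes on a set containing arbitrarily large frequencies (for every $\xi<-1$ there are real $\xi_1$ with $\xi_1^2+(\xi-\xi_1)^2=-\xi^3$). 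Worse, the estimate actually needed in the fixed-point scheme is for $\partial_x(u\overline{u})$, whose resonance function is $\xi(\xi-\xi^2-2\xi_1)$; this degenerates along the curve $\xi_1=(\xi-\xi^2)/2$, again at arbitrarily high frequency. On these resonant sets no modulation gain is available, and the full weight must be absorbed by other means (bilinear Strichartz/smoothing-type inputs and a finer decomposition); this is precisely where the nontrivial constraints among $s$, $\kappa$ and $b$ originate in \cite{CL} and \cite{CM}, and your sketch does not address it. A secondary confusion: the subcase you flag as the ``principal obstacle'' ($\sigma_0$ dominant) is in fact the easy one, since in your own dual form $\nu_1^{-1}=\sigma_0^{b}$ already sits in the denominator and delivers the gain directly.
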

\begin{proof}
Follows from Propositions 5.3 and part (a) of Proposition 5.4 in \cite{CM}.
\end{proof}

\subsection{Proof of Theorem \ref{teorema1}}
We only give the sketch of the proof because it is modeled as in \cite{CM}. 

\medskip 
Fix $(s,\kappa)$ as in the hypothesis of Theorem \ref{teorema1} and choice $b(s,\kappa)<\frac12$ and $\frac12<\alpha(s,\kappa)<\frac23$ such that all estimates in Bourgain spaces are valid.

\medskip 
Let $\tilde{u}_0$, $\tilde{v}_0$ extensions of initial data $u_0$, $v_0$ such that
$$\|\tilde{u}_0\|_{H^{s}(\R)}\lesssim\|u_0\|_{H^s(\R^+)}\;\, \text{and}\;\,
\|\tilde{v}_0\|_{H^{\kappa}(\R)}\lesssim \|v_0\|_{H^{\kappa}(\R^+)}.$$
Also consider $\tilde{f}$, $\tilde{g}$  extensions of boundary conditions  $f$ and $g$ such that 
$$\|\tilde{f}\|_{H^{(2s+1)/4}(\R)}\lesssim \|f\|_{H^{(2s+1)/4}(\R^+)}\;\, \text{and}\;\, \|\tilde{g}\|_{H^{(\kappa+1)/3}(\R)}\lesssim \|g\|_{H^{(\kappa+1)/3}(\R^+)}.$$

\medskip
The procedure is to find a fixed point for the operator $\Lambda=(\Lambda_1,\Lambda_2)$, defined by
$$
\begin{cases}
\Lambda_1(u,v)=\psi(t)e^{it\partial_x^2}\tilde{u}_0(x)+\psi(t)\mathcal{S}\big(\alpha \psi_T uv+\beta|\psi_Tu|^2\psi_Tu\big)(x,t)+\psi(t)\mathcal{L}h_1(x,t),\medskip\\ 
\Lambda_2(u,v)=\psi(t)e^{-t\partial_x^3}\tilde{v}_0(x)+\psi(t)\mathcal{K}\big(\gamma\partial_x(|\psi_Tu|^2)-\frac{1}{2}\psi_T^2\partial_xv^2\big)(x,t)+\psi(t)\mathcal{V}h_2(x,t),
\end{cases}
$$
where $h_1=\tilde{h}_1\big|_{\R^+}$  and $h_2=\tilde{h}_2\big|_{\R^+}$ with
\begin{align*}
&\tilde{h}_1(t)=\psi(t)\tilde{f}(t)-\psi(t)e^{it\partial_x^2}\tilde{u}_0(x)-\psi(t)\mathcal{S}\big(\alpha \psi_T u\psi_Tv+\beta|\psi_Tu|^2\psi_Tu\big)(0,t),\\
&\tilde{h}_2(t)=\psi(t)\tilde{g}(t)-\psi(t)e^{-t\partial_x^3}\tilde{v}_0(x)-\psi(t)\mathcal{K}\big(\gamma\partial_x(|\psi_T^2u|^2)
-\tfrac{1}{2}\partial_x((\psi_T^2v)^2)\big)(x,t)+\big)(0,t).
\end{align*}

\medskip
We consider $\Lambda$ defined on the Banach space $Z=Z(s,\kappa)=Z_1(s)\times Z_2(\kappa)$, where
\begin{align*}
&Z_1=\mathcal{C}\big(\R_t;\,H^s(\R_x)\big)\cap \mathcal{C}\big(\R_x;\,H^{(2s+1)/4}(\R_t)\big)\cap X^{s,b},\\
&Z_2=\mathcal{C}\big(\R_t;\,H^{\kappa}(\R_x)\big)\cap \mathcal{C}\big(\R_x;\,H^{(\kappa +1)/3}(\R_t)\big)
\cap Y^{\kappa,b,\alpha},
\end{align*}
and the space $Z$ is equipped wit the norm $\|(u,v)\|_Z=\|u\|_{X^{s,b}} + \|v\|_{Y^{\kappa,b,\alpha}}.$

\medskip 
By using the estimates of linear groups, boundary  operators and all the nonlinear estimates given in this section combined with Lemmas \ref{Xsb-estimates-GTV} and \ref{faminski1} we have that there exists a constant $c=c(s,\kappa)>0$ and 
$0<\epsilon =\epsilon(s,\kappa)\ll 1$  such that
\begin{align}
&\|\Lambda_1(u,v)\|_{Z_1}\leq  c\big(\|u_0\|_{H^s(\R^+)}+\|f\|_{H^{2s+1)/4}(\R^+)}+T^{\epsilon}(\|u\|_{X^{s,b}}\|v\|_{Y^{\kappa,b,\alpha}}
+\|u\|_{X^{s,b}}^3\big),\label{cont1}\\ 
&\|\Lambda_2(u,v)\|_{Z_2}\leq  c(\|v_0\|_{H^{\kappa}(\R^+)}+\|g\|_{H^{(\kappa+1)/3}(\R^+)}+T^{\epsilon}(\|u\|_{X^{s,b}}^2+\| v\|_{Y^{\kappa,b,\alpha}})\label{cont2}
\end{align}
and
\begin{equation}\label{cont3}
\|\Lambda(u_1,v_1)-\Lambda(u_2,v_2)\|_{Z}\le cT^{\epsilon}\Phi(u_1, v_1; u_2,v_2)
\big( \|u_1-u_2\|_{X^{s,b}}  + \|v_1-v_2\|_{Y^{\kappa,b,\alpha}}\big),
\end{equation}
where
$$
\Phi=\|u_1\|_{X^{s,b}} + 2\|u_2\|_{X^{s,b}} + 2\|v_1\|_{Y^{\kappa,b,\alpha}} + \|v_2\|_{Y^{\kappa,b,\alpha}}
+\|u_1\|_{X^{s,b}}^2+\|u_2\|_{X^{s,b}}^2.
$$

\medskip
Hence, considering initial-boundary data on a ball of $Z$ we have from \eqref{cont1}, \eqref{cont2} and \eqref{cont3} that there exists a positive time $T$, depending of the norm of the data, such that  $\Lambda$ defines a contraction an the rest of the proof follows by standard arguments. 

\section{\textbf{Long-time behavior in  $\boldsymbol{H^1(\R^+)\times H^1(\R^+)}$}}\label{4}

Now we give the proof for Theorem \ref{Th-global-theory}. Before starting the proof we recall that we are under hypotheses of homogeneous boundary condition (i.e. $f=g=0$).

\subsection{Proof of Theorem \ref{Th-global-theory}}
The prof of global extension in time of the local solutions is very simples, then we only give an sketch. As seen in \eqref{i10}, the $\displaystyle \|u(\cdot, t)\|_{L^2(\R^+)}$ is preserved when $f=0$, so we only need to obtain an a priori estimate for the continuous function  
\begin{equation}\label{function-varphi}
\varphi^+(t):= \|v(\cdot, t)\|^2_{L^2(\R^+)} + \|u_x(\cdot, t)\|^2_{L^2(\R^+)} + \|v_x(\cdot, t)\|^2_{L^2(\R^+)},
\end{equation}
for all $<0 < t < T_{\ast}$, where $ T_{\ast}$ denotes  the maximal time of existence of the solution. 

\medskip 
From \eqref{i20} we have 
\begin{equation}\label{long-time-right-proof-a}
\int_0^{+\infty}\Big(\frac{\alpha}{\gamma}v^2+2\,\text{Im}\big(u\bar{u}_x)\Big)dx\le \mathcal{Q}^+_0
\end{equation}
and from \eqref{i30} with $\alpha \gamma >0$ we have 
\begin{equation}\label{long-time-right-proof-b}
\int_0^{+\infty}\Big(\alpha v|u|^2-\frac{\alpha}{6\gamma}v^3+\frac{\beta}{2}|u|^4+\frac{\alpha}{2\gamma}v_x^2+|u_x|^2\Big)dx\le \mathcal{E}^+_0.
\end{equation}
Since $\alpha\gamma >0$, using \eqref{long-time-right-proof-a} and \eqref{long-time-right-proof-b}, we derive 
in the similar way as in the proof of Theorem 1.2 in \cite{CL} the following estimate:
\begin{equation}\label{long-time-right-proof-c}
\|v\|_{H^1(\R^+)}^2+\|u\|_{H^1(\R^+)}^2\leq \Psi(\|u_0\|_{H^1(\R^+)},\|v_0\|_{H^1(\R^+)}),
\end{equation}
where $\Psi$ is a continuous function depending only on $\|u_0\|_{H^1(\R^+)}$ and $\|v_0\|_{H^1(\R^+)}$.
So, the a priori estimate in \eqref{long-time-right-proof-c} allows to extend the solution globally in time.  

\medskip
Now we proceed  with the second part of the theorem. So, we consider now that data $u_0\in H^1(\R^+)\cap\in L^2\big(\R^+,xdx\big)$.

\medskip 
Multiplying the first equation of the model  by $x\bar{u}$, integrating on $\R+$ and taking the imaginary part we obtain the relation
\begin{equation}\label{long-time-right-proof-d}
\text{Re}\int_0^{+\infty}\!\!\!x\bar{u}u_tdx + \text{Im}\int_0^{+\infty}\!\!\!x\bar{u}u_{xx}dx
=\alpha \text{Im}\int_0^{+\infty}\!\!\!x|u|^2vdx+ \beta \text{Im}\int_0^{+\infty}\!\!\!x|u|^4dx.
\end{equation}
Then, combining \eqref{long-time-right-proof-d} and \eqref{i20} we have 
\begin{equation}\label{long-time-right-proof-e}
\begin{split}
\frac{d}{dt}\int_0^{+\infty}\!\!\!x|u|^2dx &= -2\,\text{Im}\int_0^{+\infty}\!\!\!u\bar{u}_xdx\\
&=\frac{\alpha}{\gamma}\int_0^{+\infty}\!\!\!v^2-\mathcal{Q}^+_0 + \int_0^t\Big(2|u_x(0,s)|^2 +\frac{\alpha}{\gamma}v_x^2(0,s)\Big)ds\\
&>-\mathcal{Q}^+_0,
\end{split}
\end{equation}
since that $\alpha \gamma >0$. Then, if $\mathcal{Q}^+_0 < 0$ the assertion in \eqref{Th-global-theory-b} follows by integration in time of the last inequality and the proof is complete.

\section{\textbf{Long-time behavior in  $\boldsymbol{H^1(\R^-)\times H^1(\R^-)}$}}\label{5}
In this section we exhibit the proof of Theorem \ref{Th-grow-time-left}. For the second assertion of theorem we will use 
the following global virial identity associated to the model.  

\subsection{Global virial identitiy} 
	
\begin{proposition}[Virial Identity]\label{lemaviriel}
Let $(u(\cdot,t),v(\cdot,t))\in \mathcal{C}\big([0,T];\; H^1(\R^-)\times H^1(\R^-)\big)$  the solution of the IBVP \eqref{SK} provided by Theorem \ref{teorema1left}, under homogeneous boundary conditions ($f=g=h=0$), in the space associated to an enough regular data such that $(u_0,v_0)\in L^2(\R^-, |x|dx)\times L^2(\R^-,|x|dx)$. Then, the function 
\begin{equation*}
\eta(t)=\int_{-\infty}^0|x|^2|u|^2dx-\frac{2\alpha}{\gamma}\int_0^t\int_{-\infty}^0xv^2 + \int_0^t\int_{-\infty}^0x|u|^2
\end{equation*}
belongs to the class $\mathcal{C}^2\big([0,T];\; \R\big)$. Furthermore, we have 
\begin{equation}\label{viriel1}
\frac{d}{dt}\int_{-\infty}^0|x|^2|u|^2dx=4\emph{Im}\int_{-\infty}^0x\bar{u}u_xdx
\end{equation}
and
\begin{multline}\label{viriel2}
\eta''(t)=8\int_{-\infty}^0\!\!|u_x|^2dx+\frac{6\alpha}{\gamma}\int_{-\infty}^0\!\!v_x^2dx\\ 
+ 2\beta\int_{-\infty}^0\!\!|u|^4dx-\frac{4\alpha}{3\gamma}\int_{-\infty}^0\!\!v^3dx + 4\alpha\int_{-\infty}^0\!\!v|u|^2dx -2\emph{Im}\;\int_{-\infty}^0\!\!u\bar{u}_x.
\end{multline}
\end{proposition}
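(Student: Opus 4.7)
I treat $(u,v)$ as a classical solution (which is the content of the regularity hypothesis), so all multiplier calculations below are justified; the $\mathcal{C}^2$ regularity of $\eta$ then follows from $(u,v)\in\mathcal{C}([0,T];H^1\times H^1)$ once the explicit formulas for $\eta'$ and $\eta''$ are established, since every integrand appearing there ($|u|^2$, $|u_x|^2$, $v^2$, $v_x^2$, $v^3$, $|u|^4$, $v|u|^2$, $\text{Im}(\bar u u_x)$, $xv_x|u|^2$) is continuous in $t$ with values in $L^1(\R^-)$ by the embedding $H^1(\R^-)\hookrightarrow L^\infty(\R^-)$ together with the weighted decay of the data. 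Throughout, the homogeneous boundary data $u(0,t)=v(0,t)=v_x(0,t)=0$ together with decay at $-\infty$ kill every boundary contribution that arises.

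For \eqref{viriel1}, rewrite the NLS as $u_t=iu_{xx}-i\alpha uv-i\beta u|u|^2$; since $|u|^2v$ and $|u|^4$ are real, multiplying by $\bar u$ and taking real parts yields $\partial_t|u|^2=-2\,\text{Im}(u_{xx}\bar u)$. Multiplying by $x^2$, integrating over $\R^-$, and integrating by parts once (the boundary at $x=0$ is killed by the $x^2$ factor and at $-\infty$ by the weighted integrability) gives $\int x^2 u_{xx}\bar u=-2\int x\bar u u_x-\int x^2|u_x|^2$, whose second summand is real; this is precisely \eqref{viriel1}.

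For \eqref{viriel2} I differentiate
\begin{equation*}
\eta'(t)=4\,\text{Im}\int_{-\infty}^0 x\bar u u_x\,dx-\frac{2\alpha}{\gamma}\int_{-\infty}^0 xv^2\,dx+\int_{-\infty}^0 x|u|^2\,dx
\end{equation*}
once more. Setting $N(t):=\int x^2|u|^2$, one differentiates $N'(t)=4\,\text{Im}\int x\bar u u_x$ by substituting $u_t$ and $u_{xt}$ from the NLS, exploits the telescoping identity $\bar u u_{xxx}-\bar u_{xx}u_x=\partial_x(\bar u u_{xx}-\bar u_x u_x)$ on the linear part, observes that the cubic nonlinearity collapses into $-\tfrac{i\beta}{2}\partial_x|u|^4$ and that the coupling (after cancellation of the $\alpha v\bar u u_x$ pair) simplifies to $-i\alpha|u|^2 v_x$, and arrives at $N''=8\int|u_x|^2-4\alpha\int x|u|^2 v_x+2\beta\int|u|^4$. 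Using the KdV equation $v_t=-v_{xxx}-vv_x+\gamma(|u|^2)_x$, integrations by parts give $\tfrac{d}{dt}\int xv^2=-3\int v_x^2+\tfrac{2}{3}\int v^3-2\gamma\int v|u|^2-2\gamma\int xv_x|u|^2$, while a single integration by parts on the NLS term produces $\tfrac{d}{dt}\int x|u|^2=-2\,\text{Im}\int u\bar u_x$.

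The decisive step is a cancellation: the term $-4\alpha\int x|u|^2v_x$ arising from $N''$ is exactly annihilated by $+4\alpha\int xv_x|u|^2$ produced, with the prefactor $-2\alpha/\gamma$, by the KdV source $\gamma(|u|^2)_x$. This is precisely the reason for including the two time-integrated correctors in the definition \eqref{sign} of $\eta$; once the cancellation is performed, the remaining summands reproduce \eqref{viriel2}. The main obstacle is the simultaneous algebraic bookkeeping of complex-valued Schr\"odinger terms and real-valued KdV terms through multiple integrations by parts, together with the careful verification that every boundary contribution genuinely vanishes under the standing homogeneous conditions.
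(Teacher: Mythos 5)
Your proposal is correct and follows essentially the same route as the paper: both differentiate $N'(t)=4\,\mathrm{Im}\int x\bar uu_x$ using the Schr\"odinger equation, multiply the KdV equation by $xv$ to handle the coupling term, and use the two time-integrated correctors in $\eta$ to cancel the non-coercive contributions, with all boundary terms killed by the factor $x$ or by the homogeneous conditions. The only difference is organizational (you insert the KdV equation when differentiating the corrector $\int xv^2$, while the paper inserts it directly into the coupling term $\tfrac{\alpha}{2}\int xv(|u|^2)_x$ of $N''$), and your final formulas $N''=8\int|u_x|^2-4\alpha\int xv_x|u|^2+2\beta\int|u|^4$ and $\tfrac{d}{dt}\int xv^2=-3\int v_x^2+\tfrac23\int v^3-2\gamma\int v|u|^2-2\gamma\int xv_x|u|^2$ combine to reproduce \eqref{viriel2} exactly.
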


\begin{proof}
Formally, to obtain \eqref{viriel1} by standard procedure we multiply the  first equation of \eqref{SKe}
by $2\,\bar{u}x^2$ and we take the imaginary part of the integration on  $(-\infty, 0)$ to get
\begin{equation*}
2\text{Re}\int_{-\infty}^0x^2\bar{u}u_tdx+2\text{Im}\int_{-\infty}^0x^2\bar{u}u_{xx}dx=
2\text{Im}\int_{-\infty}^0x^2|u|^2(v+\beta|u|^2)dx,
\end{equation*}
consequently
\begin{equation*}
\frac{d}{dt}\int_{-\infty}^0x^2|u|^2dx=4\text{Im}\int_{-\infty}^0x \bar {u}u_xdx,
\end{equation*}
as claimed in \eqref{viriel1}. 

\medskip 
 Now we proceed to obtain \eqref{viriel2}. After some integration by parts we have
 \begin{equation}
 \begin{split}
 \frac{d}{dt^2}\int_{-\infty}^0x^2|u|^2dx&=4\text{Im}\left(\int_{-\infty}^0x \bar{u}_tu_xdx+
 \int_{-\infty}^0x \bar{u}u_{xt}dx\right)\\
 &=4\text{Im}\left(\int_{-\infty}^0x\bar{u}_tu_xdx-\int_{-\infty}^0x\bar{u}_xu_{t}dx-\int_{-\infty}^0 \bar{u}u_{t}dx\right)\\
 &=4\text{Im}\left(2i\text{Im}\int_{-\infty}^0x\bar{u}_t u_xdx-\int_{-\infty}^0 \bar{u}u_{t}dx \right)\\
 &=8\text{Im}\int_{-\infty}^0x\bar{u}_t u_xdx-4\text{Im}\int_{-\infty}^0\bar{u}u_{t}dx\\
 &:=8A-4B.
 \end{split}
 \end{equation}
 
 Using the fist equation of \eqref{SKe} we have
 \begin{equation}
 \begin{split}
 A&=\text{Im}\left(-i \int_{-\infty}^0x\bar{u}_{xx}u_xdx+i\alpha\int_{-\infty}^0x\bar{u}vu_xdx+i\beta \int_{-\infty}^0x|u|^2\bar{u}u_xdx \right)\\
 &= -\text{Re} \int_{-\infty}^0x\bar{u}_{xx}u_xdx+\alpha\,\text{Re}\int_{-\infty}^0x\bar{u}vu_xdx+\beta\,\text{Re} \int_{-\infty}^0x|u|^2\bar{u}u_xdx\\
 &:=A_1+A_2+A_3.
 \end{split}
 \end{equation}
 
 Now, integrating by parts we get
 \begin{equation}\label{A1}
 A_1= -\frac{1}{2}\int_{-\infty}^0x\big(\bar{u}_{xx}u_x + u_{xx}\bar{u}_x \big)dx  
 = -\frac{1}{2}\int_{-\infty}^0x\big(|u_x|^2\big)_xdx = \frac{1}{2}\int_{-\infty}^0|u_x|^2dx.
 \end{equation}
 For $A_2$ we obtain
 \begin{equation}
 A_2=\frac{\alpha}{2}\int_{-\infty}^0xv\big(\bar{u}u_x + \bar{u}_xu\big)dx=
 \frac{\alpha}{2}\int_{-\infty}^0xv\big(|u|^2\big)_xdx.
 \end{equation}
 
 Now, using the second equation of \eqref{SKe} we have
 \begin{equation*}
 \begin{split}
 \frac{2\gamma}{\alpha}A_2&=\int_{-\infty}^0xvv_tdx + \int_{-\infty}^0xvv_{xxx}dx +\frac{1}{2}\int_{-\infty}^0xv(v^2)_xdx\\
 &=\frac{1}{2}\frac{d}{dt}\int_{-\infty}^0\!\!\!xv^2dx-\int_{-\infty}^0\!\!\!vv_{xx}dx -\int_{-\infty}^0\!\!\!xv_xv_{xx}dx
 -\frac{1}{2}\int_{-\infty}^0\!\!\!v^3dx-\frac{1}{2}\int_{-\infty}^0\!\!\!xv_xv^2dx\\
 &=\frac{1}{2}\frac{d}{dt}\int_{-\infty}^0\!\!\!xv^2dx + \int_{-\infty}^0\!\!\!v_x^2dx 
 -\frac{1}{2}\int_{-\infty}^0\!\!\!x(v_x^2)_x -\frac{1}{2}\int_{-\infty}^0\!\!\!v^3dx 
 -\frac{1}{6}\int_{-\infty}^0\!\!\!x(v^3)_xdx\\
 &=\frac{1}{2}\frac{d}{dt}\int_{-\infty}^0\!\!\!xv^2dx +\frac{3}{2}\int_{-\infty}^0\!\!\!v_x^2dx-\frac{1}{3}\int_{-\infty}^0\!\!\!v^3dx;
\end{split}
 \end{equation*}
so we have 
\begin{equation}\label{A2}
A_2=\frac{\alpha}{4\gamma}\frac{d}{dt}\int_{-\infty}^0\!\!\!xv^2dx +\frac{3\alpha}{4\gamma}\int_{-\infty}^0\!\!\!v_x^2dx-\frac{\alpha}{6\gamma}\int_{-\infty}^0\!\!\!v^3dx
\end{equation}
 
 For $A_3$ we have 
 \begin{equation}\label{A3}
 \begin{split}
 A_3&=\frac{\beta}{2}\int_{-\infty}^0x|u|^2(\bar{u}u_x+u\bar{u}_x)dx\\
 &=\frac{\beta}{2}\int_{-\infty}^0x\frac{1}{2}(|u|^4)_xdx\\
 & =-\frac{\beta}{4}\int_{-\infty}^0|u|^4dx. 
 \end{split}
 \end{equation}
 
 In order to calculate $B$ we use the first equation to get
 \begin{equation}\label{B}
 \begin{split}
 B&=\text{Im}\left(i\int_{-\infty}^0u_{xx}\bar{u}dx-i\alpha\int_{-\infty}^0|u|^2v-\beta i\int_{-\infty}^0|u|^4dx \right)\\
 &=-\int_{-\infty}^0|u_x|^2dx-\alpha\int_{-\infty}^0|u|^2v-\beta\int_{-\infty}^0|u|^4dx.
 \end{split}
 \end{equation}
 
 On the other hand, similar to calculations in  \eqref{long-time-right-proof-d} we have
 
 \begin{equation}\label{virial-final}
 \frac{d}{dt}\int_{-\infty}^0x|u|^2dx = -2\,\text{Im}\int_{-\infty}^0u\bar{u}_xdx.
 \end{equation}
 
Finally,  the virial identity \eqref{viriel2} follows by combining the estimates \eqref{A1}, \eqref{A2}, \eqref{A3}, \eqref{B} and \eqref{virial-final}. 
 
In all the computations above we assume regularity enough for the solution. To justify the calculus in the functional spaces 
considered in the hypotheses of Proposition \ref{lemaviriel}, can be used the regularization argument used in \cite{Cazenavebook} in the context of Nonlinear Schr\"odinger equation.
\end{proof}

\subsection{Proof of Theorem \ref{Th-grow-time-left}}
We consider system \eqref{SKe} with $f=g=h=0$, then from \eqref{i20} and \eqref{i30} we have

\begin{equation}\label{grow-time-left-proof-a}
2\text{Im} \int_{-\infty}^0u\bar{u}_xdx = \mathcal{Q}_0^- + 2\int_0^t|u_x(0,s)|^2ds - \frac{\alpha}{\gamma}\int_{-\infty}^0v^2dx
\end{equation}

and

\begin{multline}\label{grow-time-left-proof-b}
8\int_{-\infty}^0|u_x|^2dx +\frac{4\alpha}{\gamma}\int_{-\infty}^0v_x^2dx + 4\beta\int_{-\infty}^0|u|^4dx -\frac{4\alpha}{3\gamma}\int_{-\infty}^0v^3dx\\
+ 8\alpha\int_{-\infty}^0v|u|^2dx =8\mathcal{E}^-_0 + \frac{4\alpha}{\gamma}\int_0^tv_{xx}^2(0,s)ds.
\end{multline}

\smallskip 
\noindent \textbf{Proof of the  assertion (a).} From \eqref{virial-final} and \eqref{grow-time-left-proof-a} we get 
\begin{equation}\label{grow-time-left-proof-a1}
\begin{split}
\frac{d}{dt}\int_{-\infty}^0|x| |u|^2dx & = 2\,\text{Im}\int_{-\infty}^0u\bar{u}_xdx\\
 &=\mathcal{Q}_0^- + 2\int_0^t|u_x(0,s)|^2ds - \frac{\alpha}{\gamma}\int_{-\infty}^0v^2dx.
\end{split}
\end{equation}
So, since $\alpha/\gamma <0$ we have 
$$\frac{d}{dt}\int_{-\infty}^0|x| |u|^2dx \ge \mathcal{Q}_0^-.$$
Hence
$$\int_{-\infty}^0|x| |u|^2dx \ge \mathcal{Q}_0^-t + \int_{-\infty}^0|x| |u_0|^2dx.$$

\smallskip 
\noindent \textbf{Proof of the assertion (b).} Combining \eqref{viriel2} with \eqref{grow-time-left-proof-a} and \eqref{grow-time-left-proof-b}  we obtain 

\begin{multline}\label{grow-time-left-proof-c}
\eta''(t)=8\mathcal{E}_0^- + \frac{4\alpha}{\gamma}\int_0^tv_{xx}^2(0,s)ds + \frac{2\alpha}{\gamma}\int_{-\infty}^0v_x^2dx -2\beta\int_{-\infty}^0|u|^4dx\\
-4\alpha\int_{-\infty}^0v|u|^2dx  -\mathcal{Q}_0^- -2\int_0^t|u_x(0,s)|^2ds + \frac{\alpha}{\gamma}\int_{-\infty}^0v^2dx.
\end{multline}

The hypothesis  $\alpha/\gamma < 0$ yield us the inequality 
$$\eta''(t) < 8\mathcal{E}_0^- - \mathcal{Q}_0^- -2\beta\int_{-\infty}^0|u|^4dx+ \frac{\alpha}{\gamma}\int_{-\infty}^0v^2dx  -4\alpha\int_{-\infty}^0v|u|^2dx;$$
so Young's inequality implies that 
$$\eta''(t) < 8\mathcal{E}_0^- - \mathcal{Q}_0^- + (4|\alpha \gamma| -2\beta)\int_{-\infty}^0|u|^4dx.$$
Hence, for $\beta \ge  2|\alpha \gamma|$ we have the uniform bond 

\begin{equation}\label{grow-time-left-proof-d}
\eta''(t) < 8\mathcal{E}_0^- - \mathcal{Q}_0^-,\quad 0 \le t < T_{\ast},
\end{equation}
where $ T_{\ast}$ denotes  the maximal time of existence of the solution.  

Now  we  assume that  $T_{\ast}=+\infty$. Then, after two integrations in time we have from \eqref{grow-time-left-proof-d} the following estimate
\begin{multline*}
\int_{-\infty}^{0}x^2|u|^2dx + \frac{2\alpha}{\gamma}\int_0^t\int_{-\infty}^0xv^2dxds - \int_0^t\int_{-\infty}^0x|u|^2dxds\\
<\frac{ 8\mathcal{E}_0^- - \mathcal{Q}_0^-}{2}t^2 + \eta'(0)t + \eta(0),
\end{multline*}
where 

\begin{itemize}
\item $\displaystyle \eta'(0)= 4\text{Im}\int_{-\infty}^0x \bar{u}_0\partial_xu_0dx + \frac{2\alpha}{\gamma}\int_{-\infty}^0|x|v_0^2dx-\int_{-\infty}^0|x||u_0|^2dx,$ \medskip 
\item $\displaystyle \eta(0)= \int_{-\infty}^0x^2|u_0|^2dx.$
\end{itemize}
Thus, 
\begin{multline*}
0 < \int_{-\infty}^{0}x^2|u|^2dx < \frac{ 8\mathcal{E}_0^- - \mathcal{Q}_0^-}{2}t^2 + \eta'(0)t + \eta(0)\\
- \frac{2\alpha}{\gamma}\int_0^t\||x|^{1/2}v(\cdot, s)\|^2_{L^2(\R^-)}ds+ \int_0^t\||x|^{1/2}u(\cdot, s)\|^2_{L^2(\R^-)}ds,
\end{multline*}
which implies 
\begin{equation}\label{grow-time-left-proof-e}
\frac{\mathcal{Q}_0^- - 8\mathcal{E}_0^-}{2}t^2 - \eta'(0)t - \eta(0) < \int_0^tP_{u, v}(s)ds,
\end{equation}
where
$$P_{u,v}(s):= \|\,|x|^{1/2}u(\cdot, s)\|^2_{L^2(\R^-)} + 2\big|\tfrac{\alpha}{\gamma}\big|\, \|\,|x|^{1/2}v(\cdot, s)\|^2_{L^2(\R^-)}$$

Now we note that if $\mathcal{Q}_0^- >  8\mathcal{E}_0^-$ we should have, for all time   $t>0$, the inequality

\begin{equation}\label{grow-time-left-proof-f}
\sup\limits_{s\in [0,\,t]}P_{u, v}(s) > \frac{\mathcal{Q}_0^- - 8\mathcal{E}_0^-}{2}t - \eta'(0)- \frac{\eta(0)}{t},
\end{equation}

because otherwise we would have $P_{u, v}(s) \le \frac{\mathcal{Q}_0^- - 8\mathcal{E}_0^-}{2}t - \eta'(0)- \frac{\eta(0)}{t}$
for all $0\le s \le t$. 

Finally, from \eqref{grow-time-left-proof-f} it follows that 

$$\lim\limits_{t\to +\infty} \frac{1}{t^{1-}}\|P_{u,v}\|_{L^{\infty}([0,\,t])}=+\infty;$$

consequently 

$$\lim\limits_{t\to +\infty}\Big( \|\,|x|^{1/2}u(\cdot, t)\|^2_{L^2(\R^-)} + \|\,|x|^{1/2}v(\cdot, t)\|^2_{L^2(\R^-)}\Big) =+\infty$$

\medskip 
and the proof is finished.

\medskip 
\section{\textbf{Appendix}}\label{laws-deduction}

Here we will derive the conserved quantities \eqref{i1}, \eqref{i2} and \eqref{i3}. 

\subsection{Derivation of the law for the mass}\label{mass-homo-regularization}

The identity \eqref{i1} can be obtained formally by multiplying the equation by $\bar{u}$ and integrating by parts. As the solution obtained in Theorem \ref{teorema1} solves the IBVP in the distributional sense, we need to regularize the solutions for justify the calculus. For the convenience to the reader, we present here the argument of regularization on the case of homogeneous boundary conditions $f=g=0$.

Let $(u,v)$ the solution for IBVP \eqref{SK} in the space:
$$\mathcal{C}([0,T^*);\,H^1(\R^+))\cap \mathcal{C}(\R^+;\,H^{3/4}(0,T^*))\times 
\mathcal{C}([0,T^*);\,H^1(\R^+))\cap \mathcal{C}(\R^+;\,H^{2/3}(0,T^*))$$
associated to the initial-boundary data 
$$(u_0,v_0)\in H^1(\R^+)\times H^1(\R^+) \quad \text{and}\quad (f,g)\in H^{3/4}(\R^+)\times H^{2/3}(\R^+).$$

Let $\tilde{u}$ and $\tilde{v}$ nice extensions of $u$ and $v$ for all $\R^2$ and let $\theta(t)$ a non-negative smooth function supported on $[-\frac{1}{2},-1]$ with $\displaystyle \int_{\R} \theta(t) dt=1$. 
For $\delta$, $\epsilon >0$ we define  $\theta_{\delta}(t)=\delta^{-1}\theta(\delta^{-1}t)$ and then we consider
\begin{equation}
u_{\delta,\epsilon}(x,t)=\int \int \tilde{u}(y,s) \theta_{\epsilon}(x-y) \theta_{\delta}(t-s)dy ds,
\end{equation} 
\begin{equation}
v_{\delta,\epsilon}(x,t)=\int \int \tilde{v}(y,s) \theta_{\epsilon}(x-y) \theta_{\delta}(t-s)dy ds.
\end{equation} 

Note that the functions are supported in $(-\frac{\delta}{2},T-\delta)\times (-\frac{\delta}{2},+\infty)$ and 
since $(u(t),v(t))\in H^1(\R^+)\times H^1(\R^+)$ both  integrals are well-defined in the classical sense.

We start out by making a few observations about these functions. Fix $T_1< T_1^*$ and $T_2< T_2^*$ and consider $\epsilon\leq \min\{T_1^*-T_1, T_2^*-T_2\}$. Suppose that $(x,t)\in \R^+\times (0,T_i)$, then the integration on the definition of   
 $u_{\delta,\epsilon}$ and $v_{\delta,\epsilon}$ take places on 
 $$\Big\{y:\,\frac{\epsilon}{2}+x\leq y< +\infty\Big\}\quad  \text{and}\quad  \Big\{s:\,\frac{\delta}{2}+t\leq s\leq t+\delta\Big\}.$$ 
 Thus since  $u_{\delta,\epsilon}$ and $v_{\delta,\epsilon}$ are in $C^{\infty}(\R^2)$, we see that
 \begin{equation}\label{regular}
 \begin{cases}
 i\partial_tu_{\delta,\epsilon}+\partial_x^2 u_{\delta,\epsilon}=\alpha u_{\delta,\epsilon}v_{\delta,\epsilon}+\beta u_{\delta,\epsilon}|u_{\delta,\epsilon}|^2,&  (x,t)\in\mathbb{R}^+\times(0,T_1),\medskip\\
 \partial_tv_{\delta,\epsilon}+\partial_{x}^3v_{\delta,\epsilon}+ v_{\delta,\epsilon}\partial_xv_{\delta,\epsilon}=\gamma\partial_x(|u_{\delta,\epsilon}|^2),&  (x,t)\in\mathbb{R}^+\times(0,T_2),
 \end{cases}
 \end{equation}
 in the classical sense. Notice also that 
 \begin{equation}
 u_{\delta,\epsilon}\in \mathcal{C}([0,T_1^*);\, H^1(\R^+))\cap \mathcal{C}(\R^+;\,H^{3/4}(0,T_1^*))
 \end{equation}
 and 
 \begin{equation}
 v_{\delta,\epsilon}\in \mathcal{C}([0,T_2^*);\,H^1(\R^+))\cap \mathcal{C}(\R^+;\, H^{2/3}(0,T_2^*))
 \end{equation}
 uniformly in $(\epsilon,\delta)$.
 
 For $\epsilon$ and $\delta$ fixed we see that $\partial_tu_{\delta,\epsilon}$, $\partial_tv_{\delta,\epsilon}, \partial_x^ku_{\delta,\epsilon}, \partial_x^kv_{\delta,\epsilon}\in L^2(\R_x^+)$, with the bounds
 \begin{align}
 &\|\partial_t u_{\delta,\epsilon}\|_{L^2(\R^+)}\lesssim  \frac{1}{\delta} \|u\|_{L^2(\R^+)},\quad \;\; \|\partial_t v_{\delta,\epsilon}\|_{L^2(\R^+)}\lesssim  \frac{1}{\delta} \|v\|_{L^2(\R^+)},\\
 &\|\partial_x^{k} u_{\delta,\epsilon}\|_{L^2(\R^+)}\lesssim  \frac{1}{\epsilon^{k}} \|u\|_{L^2(\R^+)},\quad
 \|\partial_x^{k} v_{\delta,\epsilon}\|_{L^2(\R^+)}\lesssim  \frac{1}{\epsilon^{k}} \|v\|_{L^2(\R^+)}.
 \end{align}
 
 Multiplying the first equation of \eqref{regular} by $\bar{u}_{\delta,\epsilon}$ and integrating by parts we obtain 
 \begin{equation}\label{i1regular}
\mathcal{M}(t)=\int_0^{+\infty}|u_{\delta,\epsilon}(x,t)|^2dx
=\mathcal{M}(0)+2\int_0^t\text{Im}(\partial_xu_{\delta,\epsilon}(0,s)\bar{u}_{\delta,\epsilon}(0,s))ds.
 \end{equation}

From the definition of $u_{\delta,\epsilon}(t)$ we see that 
 \begin{equation}\label{ap1}
 \sup_{t\in (0,T_1^*)}\|u_{\delta,\epsilon}(\cdot, t)-u(\cdot, t)\|_{L^2(\R^+)}\rightarrow 0,\ \text{when}\ \epsilon,\delta \rightarrow 0\; 
 \end{equation}
 and
\begin{equation}\label{ap1-a}
\sup_{x\in \R^+}\|u_{\delta,\epsilon}(x,\cdot)-u(x,\cdot)\|_{L^2(0,t)}\rightarrow 0,\ \text{when}\ \epsilon,\delta \rightarrow 0.
\end{equation}

On the other hand,  Cauchy-Schwarz inequality implies that
\begin{equation}\label{ap0}
2\int_0^t\text{Im}(\partial_xu_{\delta,\epsilon}(0,s)\bar{u}_{\delta,\epsilon}(0,s))ds \leq \|u_{\delta,\epsilon}(0,s)\|_{L^2(0,T)}\|\partial_xu_{\delta,\epsilon}(0,s)\|_{L^2(0,T)},
\end{equation}
so using the fact $f=0$ and \eqref{ap1-a} we have that
\begin{equation}\label{ap01}
\|u_{\delta,\epsilon}(0,s)\|_{L^2(0,T)}\rightarrow 0,\; \text{when}\ \epsilon,\delta \rightarrow 0. 
\end{equation}
Now we estimate $\|\partial_xu_{\delta,\epsilon}(0,s)\|_{L^2(0,T)}$. By the mean value Theorem, given $L>0$, there exists $0<x_1<L$ such that
\begin{equation}\label{ap2}
\partial_xu_{\delta,\epsilon}(x_1,t)=\frac{1}{L}\left[u_{\delta,\epsilon}(L,t)-u_{\delta,\epsilon}(0,t)\right].
\end{equation}
Again by the mean value Theorem, there exists $x_2$ with $0<x_2<x_1$ such that
\begin{equation}\label{ap3}
\partial_xu_{\delta,\epsilon}(x_1,t)-\partial_xu_{\delta,\epsilon}(0,t)=x_1\partial_x^2u_{\delta,\epsilon}(x_2,t).
\end{equation}
Subtracting \eqref{ap2} and \eqref{ap3} give us
\begin{equation}\label{ap4}
\|\partial_xu_{\delta,\epsilon}(0,\cdot)\|_{L^2(0,T)}\leq L\sup_{0\leq y\leq L}\|\partial_x^2u_{\delta,\epsilon}(y,\cdot)\|_{L^2(0,T)} +L^{-1}\sup_{0\leq y\leq L}\|u_{\delta,\epsilon}(y,\cdot)\|_{L^2(0,T)}.
\end{equation}
Now we fix $L$ such that 
\begin{equation}
\sup_{0\leq y\leq L}\|u(y,\cdot)\|_{L^2(0,T)}+\|v(y,\cdot)\|_{L^2(0,T)}\leq 1
\end{equation}
and we estimate each term in the \eqref{ap4}. For the second term we have
\begin{equation}
L^{-1}\sup_{0\leq y\leq L}\|u_{\delta,\epsilon}(y,\cdot)\|_{L^2(0,T)} \leq L^{-1}\sup_{\delta\leq y\leq -2\delta +L}\|u(y,\cdot)\|_{L^2(0,T)}\leq L^{-1}.
\end{equation}
To estimate the first term of \eqref{ap4} we use the relation
$$i\partial_tu_{\delta,\epsilon}+\partial_x^2 u_{\delta,\epsilon}=\alpha u_{\delta,\epsilon}v_{\delta,\epsilon}+\beta u_{\delta,\epsilon}|u_{\delta,\epsilon}|^2.$$
Simple calculations show  that
\begin{equation}\label{ap5}
\|\partial_tu_{\delta,\epsilon}\|_{L^2(0,T)}\leq \delta^{-1}\sup_{\epsilon\leq y\leq L+2\epsilon}\|u(x,\cdot)\|_{L^2(0,T)}\leq \delta^{-1}.
\end{equation}
By Sobolev immersion we have that
\begin{equation}\label{ap6}
\begin{split}
\sup_{\delta\leq y\leq -2\delta +L}\|u_{\delta,\epsilon}|u_{\delta,\epsilon}|^2\|_{L^2(0,T)}
&\leq \sup_{\delta\leq y\leq -2\delta +L} \|u\|^2_{L^{\infty}(0,T)}\|u_{\delta,\epsilon}\|_{L^{2}(0,T)}\\
&\leq \sup_{\delta\leq y\leq -2\delta +L} \|u_{\delta,\epsilon}\|_{H^1(0,T)}^2\|u_{\delta,\epsilon}\|_{L^{2}(0,T)}\\
&\leq 1+\delta^{-2}
\end{split}
\end{equation}
and
\begin{equation}\label{ap7}
\begin{split}
\sup_{\delta\leq y\leq -2\delta +L}\|u_{\delta,\epsilon}v_{\delta,\epsilon}\|_{L^2(0,T)}&\leq \sup_{\delta\leq y\leq -2\delta +L} \|v_{\delta,\epsilon}\|_{L^2(0,T)}\|u_{\delta,\epsilon}\|_{L^{\infty}(0,T)}\\
&\leq \sup_{\delta\leq y\leq -2\delta +L} \|v_{\delta,\epsilon}\|_{L^2(0,T)}\|u_{\delta,\epsilon}\|_{H^1(0,T)}\\
&\leq 1+\delta^{-1}.
\end{split}
\end{equation}
Combining \eqref{ap5}, \eqref{ap6} and \eqref{ap7} we get
\begin{equation}\label{ap8}
L\sup_{0\leq y\leq L}\|\partial_x^2u_{\delta,\epsilon}(y,\cdot)\|_{L^2(0,T)}\leq L(2\delta^{-1}+2+\delta^{-2}).
\end{equation}
Hence, using \eqref{ap0}, \eqref{ap01} and \eqref{ap8} for fixed $\delta$ we obtain
\begin{equation}
2\int_0^t\text{Im}(\partial_xu_{\delta,\epsilon}(0,s)\bar{u}_{\delta,\epsilon}(0,s))ds \rightarrow 0,\; \text{when}\;\epsilon\rightarrow 0.
\end{equation}
Send $\epsilon\rightarrow 0$ in \eqref{i1regular} we obtain
\begin{equation}
\int_0^{+\infty}|u_{\delta}(x,t)|^2dx=\int_0^{+\infty}|u_{\delta}(x,0)|^2dx.
\end{equation}
Finally,  sending $\delta \rightarrow 0$ we get
\begin{equation}
\mathcal{M}(t)=\int_0^{+\infty}|u(x,t)|^2dx=\mathcal{M}(0).
\end{equation}

\subsection{Formal derivation for the moment functional on the positive half-line}
We begin by multiplying the second equation of \eqref{SK} by $v$ and integrating in $(0,+\infty)$ we have that
\begin{equation}
\frac{\alpha}{2\gamma}\frac{d}{dt} \int_{0}^{+\infty}v^2dx+ \frac{\alpha}{\gamma}\int_{0}^{+\infty}v_{xxx}vdx+
\frac{\alpha}{\gamma}\int_{0}^{+\infty}v^2v_xdx=\alpha\int_{0}^{+\infty} (|u|^2)_xvdx
\end{equation}

By directly integration we get
\begin{equation}
\int_0^{+\infty}v_{xxx}v dx=-\int_0^{+\infty}v_{xx}v_xdx-v_{xx}(0,t)v(0,t)=\frac12 v_x^2(0,t)-v_{xx}(0,t)v(0,t)
\end{equation}
and
\begin{equation}
\int_{0}^{+\infty} v^2v_xdx=\int_0^{+\infty}\frac{d}{dx}\frac{v^3}{3} dx=-\frac{v^3}{3}(0,t). 
\end{equation}
Thus 
\begin{equation}\label{eq-moment-initial}
\frac{\alpha}{\gamma}\frac{d}{dt} \int_{0}^{+\infty}v^2dx + \mathcal{Q}^{v}(t)=2\alpha\int_{0}^{+\infty} (|u|^2)_xvdx ,
\end{equation}
with $ \mathcal{Q}^{v}(t)$ defined in \eqref{i2-b}.

On the other hand, 
\begin{equation}\label{eq1}
\begin{split}
\alpha \int_{0}^{+\infty} (|u|^2)_xvdx=2 \alpha\, \text{Re}\int_0^{+\infty}uv\bar{u}_x dx
\end{split}
\end{equation} 
and also, from the first equation of \eqref{SK}, we have 
\begin{equation}\label{eq1-a}
\alpha\,\text{Re}\int_0^{+\infty}uv\bar{u}_x dx=\text{Re}\int_0^{+\infty}(iu_t\bar{u}_x+u_{xx}\bar{u}_x)dx- \beta\,\text{Re}\int_0^{+\infty}|u|^2u\bar{u}_x dx.
\end{equation}
Now we proceed with the calculations of the integral terms in  the right hand of \eqref{eq1-a}. By integration by parts we have
\begin{equation}\label{03091}
\begin{split}
\text{Re}\int_0^{+\infty}\!\!\!iu_t\bar{u}_xdx&=-\text{Im}\int_0^{+\infty}\!\!\!u_t\bar{u}_xdx\\
&=-\text{Im}\int_0^{+\infty}\frac{d}{dt}(u\bar{u}_x)dx+\text{Im}\int_0^{+\infty}\!\!\!u\bar{u}_{xt}dx\\
&=-\text{Im}\int_0^{+\infty}\frac{d}{dt}(u\bar{u}_x)dx-\text{Im}\int_0^{+\infty}\!\!\!u_x\bar{u}_tdx-\text{Im}(u(0,t)\bar{u}_t(0,t)).
\end{split}
\end{equation}
Furthermore, from the equalities in the right-hand of \eqref{03091}  we conclude
$$-\text{Im}\int_0^{+\infty}u_t\bar{u}_xdx= -\text{Im}\int_0^{+\infty}\frac{d}{dt}(u\bar{u}_x)dx + \text{Im}\int_0^{+\infty}\!\!\!\bar{u}_xu_tdx  -\text{Im}(u(0,t)\bar{u}_t(0,t)),$$
and so
\begin{equation}\label{03092}
2\text{Im}\int_0^{+\infty}u_t\bar{u}_xdx= \text{Im}\int_0^{+\infty}\frac{d}{dt}(u\bar{u}_x)dx + \text{Im}(u(0,t)\bar{u}_t(0,t)).
\end{equation}
Then, combining the first equality in \eqref{03091} and \eqref{03092} we obtain
\begin{equation}\label{03093}
\text{Re}\int_0^{+\infty}iu_t\bar{u}_x dx=-\frac12\frac{d}{dt}\text{Im}\int_0^{+\infty}u\bar{u}_x dx-
\frac12\text{Im}(u(0,t)\bar{u}_t(0,t)).
\end{equation}
On the other hand, 
\begin{equation}\label{03094}
\begin{split}
\text{Re}\int_0^{+\infty}u_{xx}\bar{u}_xdx&=\frac12\int_0^{+\infty}(u_{xx}\bar{u}_x+\bar{u}_{xx}u_x)dx\\
&=\frac12 \int_0^{+\infty}\frac{d}{dx}|u_x|^2=-\frac12 |u_x(0,t)|^2.
\end{split}
\end{equation}
and lastly
\begin{equation}\label{03095}
\begin{split}
-\beta\,\text{Re}\int_0^{+\infty} |u|^2u\bar{u}_x dx&=-\frac{\beta}{2}\int_0^{+\infty}|u|^2(u\bar{u}_x+\bar{u}u_x)dx\\
&=-\frac{\beta}{4}\int_0^{+\infty}\frac{d}{dx}|u|^4dx=\frac{\beta}{4}|u(0,t)|^4.
\end{split}
\end{equation}

By combining \eqref{eq1}, \eqref{eq1-a}, \eqref{03093}, \eqref{03094} and \eqref{03095} we get
\begin{equation}\label{eq-moment-final}
2\alpha \int_{0}^{+\infty}(|u|^2)_xvdx=-\frac{d}{dt}\int_0^{+\infty}2\text{Im}(u\bar{u}_x)dx -\mathcal{Q}^{u}(t),
\end{equation}
with $\mathcal{Q}^{u}(t)$ defined in \eqref{i2-a}.

Finally, combining  \eqref{eq-moment-initial} with \eqref{eq-moment-final} and integrating on the time interval $[0, t]$ we  obtain the equation \eqref{i2} which governs 
the moment of solutions. 

\subsection{Formal derivation for the energy functional on the positive half-line}

By multiplying the first equation of \eqref{SK} by $\bar{u}_t$, taking the real part and integrating in $(0,+\infty)$ we get
\begin{equation}\label{tec11}
\text{Re}\int_0^{+\infty}u_{xx}\bar{u}_t\ dx= \alpha\, \text{Re}\int_0^{+\infty}u\bar{u}_tv\,dx+ \beta\, \text{Re}\int_0^{+\infty}|u|^2u\bar{u}_t\,dx.
\end{equation}
and now  we will calculate each integral above. 

An integration by parts gives
\begin{equation}\label{tec22}
\begin{split}
\text{Re}\int_0^{+\infty}u_{xx}\bar{u}_tdx=-\frac12 \frac{d}{dt}\int_0^{+\infty}|u_x|^2dx-\text{Re}(u_x(0,t)\bar{u}_t(0,t))
\end{split}.
\end{equation}

On the other hand, 
\begin{equation}\label{tec33}
 \beta\, \text{Re}\int_0^{+\infty}|u|^2u\bar{u}_t\,dx=\frac{\beta}{4}\frac{d}{dt}\int_0^{+\infty}|u|^4dx
\end{equation}

Finally, we have that 
\begin{equation}\label{tec44}
\begin{split}
\alpha\, \text{Re}\int_0^{+\infty}u\bar{u}_tvdx&=\frac{\alpha}{2}\int_0^{+\infty}v \frac{d}{dt}|u|^2 dx\\
&=\frac{\alpha}{2}\frac{d}{dt}\int_0^{+\infty}|u|^2vdx -\frac{\alpha}{2}\int_0^{+\infty}|u|^2v_tdx\\
\end{split}
\end{equation}
From the fundamental theorem of calculus and using  the second equation of \eqref{SK} we get 
\begin{equation}\label{tec55}
\begin{split}
-\frac{\alpha}{2}\int_0^{+\infty}|u|^2v_tdx&=-\frac{\alpha}{2}\int_0^{+\infty}\Big(\int_0^x(|u|^2)_x(x',t)dx'+|u(0,t)|^2\Big)v_tdx\\
&=-\frac{\alpha}{2}|u(0,t)|^2\int_{0}^{+\infty}v_tdx-\frac{\alpha}{2}\int_0^{+\infty}\Big[\int_0^x(|u|^2)_x(x',t)dx'\Big]v_tdx\\
&:=I(u,v) + II(u,v).
\end{split}
\end{equation}
where
\begin{equation}\label{tec55-a}
I(u,v)= -\frac{\alpha}{2}|u(0,t)|^2\Big(v_{xx}(0,t)+\frac12 v^2(0,t)-\gamma|u(0,t)|^2\Big)
\end{equation}
and 
\begin{equation}\label{tec55-b}
II(u,v)= -\frac{\alpha}{2\gamma}\int_0^{+\infty}\Big[\int_0^x\Big(v_t+v_{xxx}+\tfrac12(v^2)_x\Big)(x',t)dx'\Big]v_tdx.
\end{equation}
Now we  decomposed $II(u, v)$  as  follows: 
\begin{equation*}
\begin{split}
II(u, v)&=-\frac{\alpha}{2\gamma}\int_0^{+\infty}\Big[\int_0^x\Big[v_t(x',t)dx'\Big]v_tdx -\frac{\alpha}{2\gamma}\int_0^{+\infty}\Big[\int_0^x\Big(v_{xx}+\tfrac12(v^2)\Big)_xdx'\Big]v_tdx\\
&:=II_A(u,v) + II_B(u,v).
\end{split}
\end{equation*}
For the first term we have 
\begin{equation*}
\begin{split}
II_{A}(u, v)&=-\frac{\alpha}{4\gamma}\int_0^{+\infty}\frac{d}{dx}\Big(\int_0^x v_t(x',t)dx'\Big)^2dx\\
&=-\frac{\alpha}{4\gamma}\Big(\int_0^{+\infty}v_t(x, t)dx\Big)^2\\
&=-\frac{\alpha}{4\gamma}\Big[v_{xx}(0,t)+\frac12 v^2(0,t)-\gamma|u(0,t)|^2\Big]^2=-\frac12 \mathcal{E}_2(t),
\end{split}
\end{equation*}
with $\mathcal{E}_2(t)$ defined in \eqref{i3-b}. The second term is calculated as follows:

\begin{equation*}
\begin{split}
II_B(u,v)&=-\frac{\alpha}{2\gamma}\int_0^{+\infty}\!\!\Big(v_{xx}v_t+\frac12 v^2v_t\Big)dx + \Big(\frac{\alpha}{2\gamma}v_{xx}(0,t)+\frac{\alpha}{4\gamma}v^2(0,t)\Big)\int_0^{+\infty}\!\!\!v_tdx\\
&:=II_{B_1}(u,v) + II_{B_2}(u,v),
\end{split}
\end{equation*}
where
\begin{equation*}
\begin{split}
II_{B_1}(u,v)&=\frac{\alpha}{2\gamma}\int_0^{+\infty}v_xv_{xt}dx+\frac{\alpha}{2\gamma}v_x(0,t)v_t(0,t) - \frac{\alpha}{12\gamma}\frac{d}{dt}\int_0^{+\infty}\!\!v^3dx\\
&=\frac{d}{dt}\int_0^{+\infty}\Big(\frac{\alpha}{4\gamma}v_x^2 - \frac{\alpha}{12\gamma}v^3\Big)dx + \frac{\alpha}{2\gamma}v_x(0,t)v_t(0,t)
\end{split}
\end{equation*}
and 
\begin{equation*}
\begin{split}
II_{B_2}(u,v)&=\Big(\frac{\alpha}{2\gamma}v_{xx}(0,t)+\frac{\alpha}{4\gamma}v^2(0,t)\Big)\Big(v_{xx}(0,t)+\frac{1}{2}v^2(0,t)-\gamma|u(0,t)|^2\Big)\\
&=\frac{\alpha}{2\gamma}\Big(v_{xx}(0,t)+\tfrac{1}{2}v^2(0,t)\Big)\Big(v_{xx}(0,t)+\frac{1}{2}v^2(0,t)-\gamma|u(0,t)|^2\Big)\\
&=\mathcal{E}_2(t) + \frac{\alpha}{2}|u(0,t)|^2\Big(v_{xx}(0,t)+\frac{1}{2}v^2(0,t)-\gamma|u(0,t)|^2\Big),
\end{split}
\end{equation*}
with $\mathcal{E}_2(t)$ defined in \eqref{i3-b}. 

Then, from \eqref{tec55}, \eqref{tec55-a} and \eqref{tec55-b}, combined with the expressions obtained for $II_{A}(u,v)$, $II_{B}(u,v)$, $II_{B_1}(u,v)$ and $II_{B_2}(u,v)$ we have
\begin{equation}\label{tec66}
-\frac{\alpha}{2}\int_0^{+\infty}|u|^2v_tdx=\frac12 \mathcal{E}_2(t) + \frac{d}{dt}\int_0^{+\infty}\Big(\frac{\alpha}{4\gamma}v_x^2 - \frac{\alpha}{12\gamma}v^3\Big)dx + \frac{\alpha}{2\gamma}v_x(0,t)v_t(0,t) 
\end{equation}

Collecting the informations in \eqref{tec22}, \eqref{tec33}, \eqref{tec44} and \eqref{tec66} we finally obtain the law of the energy given in \eqref{i3}.


\begin{thebibliography}{99}

\bibitem{Arbieto}
A. Arbieto, A. J. Corcho and  C. Matheus; 
{\it Rough solutions for the periodic Schr\"odinger-Korteweg-de Vries system}. 
Journal of Differential Equations, (1) {\bf 230} (2006), 295--336.


\smallskip 
\bibitem{Dias1}
S. Antontsev, J. P. Dias, M. Figueira, and F.  Oliveira;
{\it Non-existence of global solutions for a quasilinear Benney system}.
Journal of Mathematical Fluid Mechanics, (2) {\bf 13} (2011), 213--222.

\smallskip 
\bibitem{Barbosa}
I. I. Barbosa; 
{\it The Cauchy problem for nonlinear quadratic interactions of the Schr\"odinger type in one dimensional space}.
Journal of Mathematical Physics, (7) {\bf 59} (2018), 071515 (24pp).


\smallskip 
\bibitem{BCP} Santosh Bhattarai, A. J. Corcho, M. Panthee;
{\it Well-Posedness for multicomponent Schr\"odinger - gKdV Systems and stability of solitary waves with prescribed mass}. 
J. Dyn. Diff. Equat.  {\bf 30} (2018), 845–-881. 


\smallskip
\bibitem{Bona}
J. L. Bona, S. M. Sun and  B. Y. Zhang;
{\it Boundary smoothing properties of the Korteweg-de Vries equation in a quarter plane and applications}.
Dynamics Partial Differential Eq, {\bf 3} (2006), 1--70.

\smallskip 
\bibitem{Bona1} 
J. L. Bona, S. M. Sun and B. Y. Zhang;
{\it Non-homogeneous boundary-value problems for one-dimensional nonlinear Schr\"odinger Equations}.
J. Math. Pures Appl., (9) {\bf 109} (2018), 1--66.

\smallskip 
\bibitem{Cazenavebook} 
T. Cazenave;
{\it Semilinear Schrödinger equations}. 
American Mathematical Society  Vol. 10. (2003).

\smallskip 
\bibitem{C} 
M. Cavalcante; 
{\it The initial-boundary value problem for some quadratic nonlinear Schr\"odinger equations on the half-line}.  
Differential and Integral Equations, (7-8) {\bf 30} (2017), 521--554.

\smallskip 
\bibitem{CK} 
J. E. Colliander and C. E. Kenig;
{\it The generalized Korteweg-de Vries equation on the half-line}.
Comm. Partial Differential Equations, {\bf 27} (2002), 2187--2266.

\smallskip 
\bibitem{CM} 
M. Cavalcante and A. J. Corcho; 
{\it The initial-boundary-value problem for the Schr\"odinger Korteweg-de-Vries system on the half-line}. 
To appear in Comm. Contemporary Mathematics (2018).

\smallskip 
\bibitem{CL} 
A. J. Corcho and F. Linares;
{\it Well-posedness for the Schr\"odinger-Korteweg-de Vries system}. 
Trans. Amer. Math. Soc., {\bf 359} (2007), 4089-4106.

\smallskip 
\bibitem{D-Nuyen-S} 
B. Deconinck,  N.  V.  Nguyen and B. L. Segal; 
{\it The interaction of long and short waves in dispersive media}.
J. Phys. A: Math. Theor., {\bf 49} (2016), 415501 (10pp).

\smallskip 
\bibitem{Dias} 
J. P. Dias and  F. Oliveira; 
{\it On a quasilinear nonlocal Benney system}.
Journal of Hyperbolic Differential Equations (1) {\bf 14} (2017), 135–-156.


\smallskip 
\bibitem{tzirakis}
M. B.  Erdogan and N. Tzirakis;
{\it Regularity properties of the cubic nonlinear Schr\"odinger equation on the half line}.
J. Funct. Anal., {\bf 271} (2016), 2539--2568.


\smallskip 
\bibitem{tzirakis3}
E. Compaan and  N. Tzirakis;
{\it Well-posedness and nonlinear smoothing for the ``good'' Boussinesq equation on the half-line}.
J. Differential Equations, {\bf 262} (2017), 5824--5859.

\smallskip 
\bibitem{Faminskii}
A. V. Faminskii; 
{\it An initial boundary-value problem in a half-strip for the Korteweg-de Vries equation in fractional-order Sobolev spaces}.
Comm. Partial Differential Eq., {\bf 29} (2004), 1653--1695.

\smallskip 
\bibitem{Fokas1} 
A. S. Fokas;
{\it A unified transform method for solving linear and certain nonlinear PDEs},
Proc. R. Soc. A {\bf 453} (1997) 1411--43.

\smallskip 
\bibitem{Fokas2}
A. S. Fokas, S. Athanassios, A. A. Himonas, A. Alexandrou and  D. Mantzavinos;
{\it The Korteweg-de Vries equation on the half-line}.
Nonlinearity, (2) {\bf 29} (2016), 489-527.	

\smallskip 
\bibitem{fi1}
M. Funakoshi and M. Oikawa;
{\it The resonant interaction between a long internal gravity wave and a surface gravity wave packet}.
J. Phys. Soc. Japan, {\bf 52} (1983), 1982--1995.

\smallskip 
\bibitem{GTV} 
J. Ginibre, Y. Tsutsumi and G. Velo;
{\it On the Cauchy problem for the Zakharov system}.
J. Funct. Anal., {\bf 151} (1997), 384--436.

\smallskip 
\bibitem{glassey} 
R. T. Glassey; 
{\it On the blowing up of solutions to the Cauchy problem for nonlinear Schr\"odinger equations}.
J. Math. Phys., {\bf 18} (1977), 1794--1797.

\smallskip 
\bibitem{GW}
Z. Guo and  Y. Wang;
{\it On the well-posedness of the Schrödinger-Korteweg-de Vries system}.
J. Differential Equations, {\bf 249} (2010), 2500--2520.

\smallskip 
\bibitem{fi2}
H. Hojo, H. Ikezi, K. Mima and K. Nishikawa;
{\it Coupled nonlinear electron-plasma and ion-acoustic waves}. 
Phys. Rev. Lett., {\bf 33} (1974), 148--151.

\smallskip 
\bibitem{Holmer} 
J. Holmer; 
{\it The initial-boundary value problem for the 1D nonlinear Schr\"odinger equation on the half-line}.
Differential and Integral equations, {\bf 18} (2005), 647--668.

\smallskip 
\bibitem{Holmerkdv} 
J. Holmer;
{\it The initial-boundary value problem for the Korteweg-de Vries equation}. 
Communications in Partial Differential Equations, {\bf 31} (2006), 1151--1190.

\smallskip 
\bibitem{fi3} 
T. Kakutani, T. Kawahara and N. Sugimoto;
{\it Nonlinear interaction between short and long capillary-gravity waves}. 
J. Phys. Soc. Japan, {\bf 39} (1975), 1379--1386.

\smallskip 
\bibitem{Ozsari}
V. K. Kalantarov and  T. \"Ozsari;
{\it Qualitative properties of solutions for nonlinear Schrödinger equations with nonlinear boundary conditions on the half-line}.
Journal of Mathematical Physics, (2) {\bf 57} (2016), 021511 (14pp).
 
\smallskip  
\bibitem{KPV} 
C. E. Kenig, G. Ponce and L. Vega;
{\it Oscillatory integrals and regularity of dispersive equations}. 
Indiana Univ. Math. J., {\bf 40} (1991), 33--69.

\smallskip 
\bibitem{Liu-Nguyen}
C. Liu and N V. Nguyen;
{\it On the (in)validity of the NLS-KdV system in the study of water waves}.
arXiv preprint, (2016) arXiv:1604.08559v1.

\smallskip 
\bibitem{fi4}
J. Satsuma and N. Yajima;
{\it Soliton solutions in a diatomic lattice system}. 
Progr. Theor. Phys., {\bf 62} (1979), 370--378.

\smallskip 

\bibitem{Tzirakis} M. B. Erdogan and N. Tzirakis; 
\textit{Regularity properties of the Zakharov system on the half line}.
Communications in Partial Differential Equations (7) {\bf 42} (2017), 1121-1149.

\smallskip 
\bibitem{YW} 
Y. Wu;
{\it The Cauchy problem of the Schr\"odinger-Korteweg-De Vries system}.
Differential Integral Equations, {\bf 23} (2010), 569--600.

\end{thebibliography}
\end{document}